\theoremstyle{theorem}
\newtheorem{theorem}{Theorem}[section]
\newtheorem{thm}{Theorem}
\newtheorem{lemma}[theorem]{Lemma}
\newtheorem{cor}[theorem]{Corollary}
\newtheorem{prop}[theorem]{Proposition}   
\newtheorem{conj}[theorem]{Conjecture}
\theoremstyle{remark}
\newtheorem{remark}[theorem]{Remark}
\newtheorem{notation}[theorem]{Notation}
\newtheorem*{rem}{Remark}
\newtheorem{ex}[theorem]{Example}
\theoremstyle{definition}
\newtheorem{defi}[theorem]{Definition} 
\newcommand{\rk}{\operatorname{rank}\nolimits}
\newcommand{\rkk}{\operatorname{rk}\nolimits}
\newcommand{\len}{\operatorname{len}\nolimits}
\newcommand{\rad}{\operatorname{rad}\nolimits}
\newcommand{\lcm}{\operatorname{lcm}\nolimits}
\newcommand{\Hom}{\operatorname{Hom}\nolimits}
\newcommand{\Ext}{\operatorname{Ext}\nolimits}
\newcommand{\End}{\operatorname{End}\nolimits}
\newcommand{\uCM}{\underline{\mathrm{CM}}\hspace{1pt}}
\newcommand{\CM}{\mathrm{CM}\hspace{1pt}}
\newcommand{\ind}{\operatorname{ind}\nolimits}
\newcommand{\C}{\mathbb{C}}
\newcommand{\B}{\operatorname{B}_{k,n}}
\newcommand{\Sub}{\mathrm{Sub}\hspace{1pt}}
\newcommand{\uSub}{\underline{\mathrm{Sub}}\hspace{1pt}}
\newcommand{\mmod}{\mathrm{mod}\hspace{1pt}}
\newcommand{\Cc}{\mathcal{C}}
\newcommand{\pr}{\mathrm{prf}}
\newcommand{\dimv}{\underline{\mathrm{dim}}\hspace{1pt}}
\newcommand{\thfrac}[3]{ 	
\begin{aligned} #1\\ \hline \\[-3\jot] #2 \\ \hline \\[-3\jot] #3 \end{aligned}
}
\newcommand{\ffrac}[4]{ 	
\begin{aligned} #1\\ \hline \\[-3\jot] #2 \\ \hline \\[-3\jot] #3 \\ \hline \\[-3\jot] #4 \end{aligned}
}
\title{Cluster categories from grassmannians and root combinatorics}
\author{Karin Baur, Dusko Bogdanic, Ana Garcia Elsener}
\newcommand{\monthword}[1]{\ifcase#1\or January\or February\or March\or April\or May\or 
June\or July\or August\or September\or October\or November\or December\fi}
\date{\monthword{\the\month} \the\day, \the\year } 
\begin{document}

\maketitle 

\begin{abstract}

The category of Cohen-Macaulay modules of an algebra $\B$ is used 
in \cite{JKS16} to give an additive categorification of the cluster algebra structure 
on the homogeneous coordinate ring of the Grassmannian of $k$-planes in $n$-space. 
In this paper, we find canonical Auslander--Reiten sequences and 
study the Auslander--Reiten translation periodicity for this category. Furthermore, we give an explicit 
construction of Cohen-Macaulay modules of arbitrary rank. We then use our results to establish a correspondence between 
rigid indecomposable modules of rank 2 and real 
roots of degree 2 for the associated Kac-Moody algebra in the tame cases.
\end{abstract}

%
\section{Introduction}\label{sec:intro}

In this paper we investigate the category $\CM (\B)$ of Cohen-Macaulay modules of an algebra $\B$, defined 
to categorify the cluster structure of the Grassmannian coordinate rings of $k$-planes in $n$-space $\mathbb{C}[G_{k,n}]$. 
We study parts of the Auslander-Reiten quiver of this
category, mostly those Auslander-Reiten sequences and components containing rigid modules.  Of particular interest are the indecomposable
Cohen-Macaulay rigid modules with the same class in the Grothendieck group, i.e.\ the modules with
the same rank 1 modules appearing as composition factors in their filtrations. 
In \cite[Section 5]{JKS16} the authors classify rank 1 modules by associating to each $k$-subset $I$ a module $L_I$, 
and they prove that such modules are in one-to-one
correspondence with the Pl{\"u}cker coordinates $\Phi_I$ in $\mathbb{C}[G_{k,n}]$.

Our first main result is a construction of canonical Auslander--Reiten sequences with rank 1 modules as 
end terms. A $k$-subset of $\mathbb{Z}_n$ is {\em almost consecutive} if it is a union of two intervals, 
one of them consisting of a single entry.

\begin{thm}[Theorem~\ref{Teo rk1-new}] \label{thm:AR-sequence}
Let $I$ be an almost consecutive $k$-subset and $J$ be such that 
$L_J=\Omega(L_I)$. Then there is an AR-sequence 
\[
L_I \to L_X/L_Y\to L_J. 
\] 
The middle 
term is a rigid rank $2$ module which is indecomposable if and only if $X$ (and $Y$) are almost 
consecutive. 
\end{thm}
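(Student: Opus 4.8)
The plan is to first pin down the two outer terms using the homological structure of $\CM(\B)$, then construct and identify the middle term combinatorially, and finally read off rigidity and indecomposability from the same combinatorial data. \emph{Step 1 (locating the sequence).} Since $\CM(\B)$ is a Hom-finite Krull--Schmidt Frobenius category in the sense of \cite{JKS16}, it has AR sequences, and its stable category $\uCM(\B)$ is $2$-Calabi--Yau. In a $2$-Calabi--Yau triangulated category the Serre functor is the double shift, so the AR translation satisfies $\tau \cong [1] \cong \Omega^{-1}$ on $\uCM(\B)$. As $I$ is almost consecutive it is not an interval, so $L_I$ is indecomposable and non-projective; the same holds for $L_J = \Omega(L_I)$. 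Applying $\tau$ gives $\tau L_J \cong \Omega^{-1}\Omega(L_I) \cong L_I$ in $\uCM(\B)$, and since $L_I$ is indecomposable non-projective this is an isomorphism of modules. Hence the AR sequence ending at the indecomposable non-projective $L_J$ has the form $0 \to L_I \to M \to L_J \to 0$, and it remains to identify $M$, to establish its rigidity, and to decide when it is indecomposable.

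\emph{Step 2 (the middle term).} I would compute $\Ext^1_{\B}(L_J, L_I)$ directly from the combinatorial models of the rank $1$ modules (their lattice/rim profiles over the cyclic quiver underlying $\B$), using the explicit description of $\Omega$ on an almost consecutive subset to read off $J$ from $I$. The aim is to show $\dim_{\C}\Ext^1_{\B}(L_J, L_I) = 1$, so that up to scalar there is a unique non-split extension; by Step 1 this extension must be the AR sequence, since its end terms are already forced. I would then realise the extension concretely by overlaying the profiles of $L_I$ and $L_J$, checking that the resulting profile defines a genuine CM module $M$ and that the structure maps are $\B$-homomorphisms. Additivity of rank on short exact sequences in $\CM(\B)$ forces $\rk M = 2$, and matching the overlaid profile against the explicit higher-rank construction identifies $M$ with the module $L_X/L_Y$, reading off $X$ and $Y$ from $I$ and $J$.

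\emph{Step 3 (rigidity).} To see $M$ is rigid I would apply $\Hom_{\B}(M,-)$ and $\Hom_{\B}(-,M)$ to the defining sequence $0 \to L_I \to M \to L_J \to 0$ and feed in the $\Hom$ and $\Ext^1$ groups among $L_I$, $L_J$, and $M$ computed from the profiles, using $2$-Calabi--Yau duality $\Ext^1_{\B}(A,B) \cong D\Ext^1_{\B}(B,A)$ to halve the work. The resulting long exact sequences should collapse to $\Ext^1_{\B}(M,M) = 0$.

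\emph{Step 4 (indecomposability).} Finally I would analyse $\End_{\B}(M)$ via the profile of $L_X/L_Y$. The module splits as a direct sum of two rank $1$ modules precisely when its profile separates into two disjoint pieces, which happens exactly when $X$ (equivalently $Y$) fails to be almost consecutive; when $X$ is almost consecutive the profile is connected, $\End_{\B}(M)$ is local, and $M$ is indecomposable. I expect the main obstacle to be Step 2: controlling the profiles precisely enough to prove that $\Ext^1_{\B}(L_J, L_I)$ is exactly one-dimensional and to identify the overlaid extension with $L_X/L_Y$ on the nose. Once this bookkeeping is in place, the rigidity of Step 3 and the splitting criterion of Step 4 follow from the same profile computation.
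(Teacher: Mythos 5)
Your Steps 1--2a follow the same skeleton as the paper: Section~\ref{sec:results} establishes $\tau=\Omega^{-1}$ on $\uCM(\B)$, Proposition~\ref{prop:I-omegaI} and Corollary~\ref{cor:ext1} give $\dim_{\C}\Ext^1(L_J,L_I)=1$ for almost consecutive $I$, and the paper then observes that any non-split extension $0\to L_I\to M\to L_J\to 0$ is automatically the AR-sequence. Your Step 3 is a genuinely different route to rigidity: the paper does not run long exact sequences but transfers to $\CM(\Pi_{k,n})$ via the quotient functor $\pi$ and cites a Geiss--Leclerc--Schr\"oer result (Remark~\ref{Remark-rigidM}) saying that the middle term of a non-split extension between rigid modules with one-dimensional extension space is rigid. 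Your computation does work, but the ``collapse'' is not automatic: you need that the connecting maps hit the one-dimensional $\Ext$-spaces, which uses that the extension class generates them and, for $\Ext^1(L_I,M)=0$, the non-degeneracy of the Serre pairing; in effect you would be reproving the cited result. Also note that your proposed identification strategy (build one explicit non-split extension and invoke uniqueness) would, if executed, be cleaner than the paper's, which instead classifies all possible middle terms and eliminates.

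The genuine gap is in Steps 2b and 4, which is exactly where the paper does most of its work. The paper identifies the middle term by constraining the peaks of potential rank~1 summands via the projective cover, ruling out non-projective decompositions with dimension-vector arguments under $\pi$ and the no-loop property of cluster-tilting quivers (Remark~\ref{rem:cto-exists}), isolating the exceptional case $j=i+2$ where the middle term is $P_i\oplus L_U$, and finally pinning down the profile $X\mid Y$ by a five-case analysis of relative positions of $L_I$ inside $M$. Your replacement for all of this is a splitting criterion that is false: it is not true that the middle term is indecomposable exactly when $X$ is almost consecutive, nor that indecomposability can be read off from connectedness of the profile of $X$ alone. Take the paper's own example $(k,n)=(3,9)$, $I=\{1,4,5\}$, $J=\{2,3,6\}$: the middle term is $L_{\{2,4,6\}}\mid L_{\{1,3,5\}}$, which is indecomposable (it sits in the rank~6 tube of Figure~\ref{fig:rank6}(B)), yet $X=\{2,4,6\}$ is a union of three singletons, not almost consecutive; generically $X=\{i+1\}\cup\{j,\dots,j+k-3\}\cup\{j+k-1\}$ has three cyclic blocks. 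The correct dividing line, as in Theorem~\ref{Teo rk1-new}, is $j\neq i+2$, equivalently that the middle term has no projective summand, equivalently that $X$ and $Y$ are $3$-interlacing (poset $(1^3,2)$, Remark~\ref{rem:poset-indec}); the introduction's phrasing is loose here, but your ``profile connected $\Rightarrow\End_{\B}(M)$ local'' assertion is not a proof and cannot be repaired as stated. Relatedly, your plan never engages the decomposable case at all: for $j=i+2$ the unique non-split extension has middle term $P_i\oplus L_U$, so your Step 2 construction and your Step 4 criterion would both have to account for a projective summand, and neither does.
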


It is not hard to see, given the triangulated structure and the Auslander--Reiten  translation of $\CM(\B)$, that non-projective rank 1 
modules are $\tau$ periodic (cf.~\cite[Proposition 2.7]{BB}). 
We use results from Demonet-Luo \cite{DL16} to explain that this category is 
$\tau$ periodic (see Section~\ref{sec:periodic}).

In the spirit of the construction of rank 1 modules, we give a definition of modules of arbitrary rank in 
Section~\ref{sec:higher-rank}, and we prove in Proposition~\ref{prop:higher-rank} that such modules are free 
over the center of $\B$, hence they are Cohen--Macaulay. 

It was observed in \cite[Section 8]{JKS16} that, when $\CM(\B)$ is of finite representation type, there is a correspondence between indecomposable rank $d$ modules and roots of degree $d$ for an associated 
Kac-Moody algebra. 
We study this correspondence for rank 2 modules  when $\CM(\B)$ is of tame representation type. Our main tool for this are 
Auslander-Reiten quivers. We compute the tubular components of the Auslander--Reiten quiver containing rigid modules of rank 1 and rank 2. We show that for each rigid indecomposable module of rank 2 
we can cycle the filtration layers to obtain a new rigid indecomposable module. This yields the next result.

\begin{thm}[Section~\ref{sec:count-all}, Section~\ref{sec:count-all-48}] \label{te:cyclemods}
When $(k,n)=(3,9)$ or $(k,n)=(4,8)$, for every real root of 
degree $2$ there are two rigid indecomposable modules. 
Moreover, if $M$ is such a rigid indecomposable rank $2$ module 
and if its filtration by rank $1$ modules is $L_I|L_J$, then the rank $2$ module with 
filtration $L_J|L_I$ is also rigid indecomposable.
\end{thm}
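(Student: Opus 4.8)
The plan is to treat both assertions by a common mechanism, the $2$-Calabi--Yau symmetry of the stable category. Recall that $\uCM(\B)$ is a Hom-finite $2$-Calabi--Yau triangulated category and that in the two tame cases it is $\tau$-periodic (Section~\ref{sec:periodic}), so that its Auslander--Reiten quiver is a disjoint union of tubes and the rigid indecomposable rank $2$ modules are exactly the quasi-length $2$ modules lying in the tubes of rank at least $3$. The starting point is the numerical identity furnished by the $2$-Calabi--Yau property: for every object $N$ one has
\[
\dim\Ext^1(N,N) \;=\; 2\dim\End(N) - q([N]),
\]
where $q(\gamma)=b(\gamma,\gamma)$ for the symmetric bilinear form $b$ of the associated Kac--Moody algebra, evaluated on the class $[N]\in K_0$. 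In particular, given $\dim\End(N)$, the dimension of $\Ext^1(N,N)$ depends on $N$ only through its class. A real root of degree $2$ is by definition a class $\alpha$ of rank $2$ with $q(\alpha)=2$.

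Given a rigid indecomposable $M$ of rank $2$ corresponding to a real root $\alpha=[M]$, I first produce the cycled module. Writing $M$ as a non-split extension $0\to L_I\to M\to L_J\to 0$ with class $\xi\in\Ext^1(L_J,L_I)$, the $2$-Calabi--Yau duality $\Ext^1(L_I,L_J)\cong D\,\Ext^1(L_J,L_I)$ shows $\Ext^1(L_I,L_J)\neq 0$, so there is a non-split extension $0\to L_J\to M'\to L_I\to 0$ with filtration $L_J|L_I$; this $M'$ is the candidate. Since $[M']=[L_I]+[L_J]=[M]$ we have $q([M'])=q([M])=2$. Because $M$ is rigid and corresponds to a real root, the identity above forces $\dim\End(M)=1$, i.e.\ $M$ is a brick; applying the identity to $M'$ shows that $M'$ is rigid if and only if $\dim\End(M')=1$, and a brick is automatically indecomposable. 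Thus \emph{both} the rigidity and the indecomposability of $M'$ reduce to the single statement that $M'$ is a brick. The remaining step is to analyse $\End(M')$ through the sequence $0\to L_J\to M'\to L_I\to 0$, using that $L_I$ and $L_J$ are non-isomorphic bricks and controlling the spaces $\Hom(L_I,L_J)$ and $\Hom(L_J,L_I)$, and then to transport the brick property of $M$ to $M'$ along the $2$-Calabi--Yau duality that interchanges the roles of $L_I$ and $L_J$ together with the classes $\xi$ and $\xi'$. I expect this transfer to be the main obstacle: one must check that the endomorphism computation is genuinely symmetric under the swap, rather than merely that the two $\Ext^1$-dimensions coincide.

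For the first assertion it remains to count. By the discussion above, a real root $\alpha$ of degree $2$ is realised by a rigid indecomposable precisely when it can be written as $\alpha=[L_I]+[L_J]$ for rank $1$ modules whose extension is a brick, and each such unordered pair $\{I,J\}$ then contributes the two cycled bricks $L_I|L_J$ and $L_J|L_I$, which are distinct because $L_I\not\cong L_J$. The explicit tube computations of Sections~\ref{sec:count-all} and~\ref{sec:count-all-48}, built on the canonical Auslander--Reiten sequences of Theorem~\ref{thm:AR-sequence}, then show that every real root of degree $2$ admits a \emph{unique} such unordered decomposition $\alpha=[L_I]+[L_J]$. Consequently each real root of degree $2$ carries exactly two rigid indecomposable rank $2$ modules, namely the cycled pair, which is precisely the content of both parts of the statement. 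The main obstacle here is the uniqueness of this decomposition, which does not follow formally and is exactly where the case-by-case enumeration for $(k,n)=(3,9)$ and $(k,n)=(4,8)$ is required.
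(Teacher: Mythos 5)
Your argument rests on the identity $\dim\Ext^1(N,N)=2\dim\End(N)-q([N])$, and this identity is false in $\uCM(\B)$. It is a Crawley--Boevey-type formula for preprojective algebras, but $\Pi_{k,n}$ is a proper quotient of the preprojective algebra of type $A_{n-1}$ (so extension groups over it are genuinely different), and $q$ here is the form of $J_{k,n}$ read through the encoding $\varphi$, not an Euler form of the algebra. A counterexample sits inside the very case you are treating: for $(k,n)=(4,8)$ the module $M$ with profile $1246|3578$ is rigid and indecomposable (it is one of the eight modules listed in Section~\ref{sec:count-all-48}; cf.\ also $L_{2568}\mid L_{1347}$ and \cite[Figure 13]{JKS16}), yet its class $\underline{a}=(1,1,\dots,1)$ satisfies $q(\underline{a})=8+\tfrac{2-4}{16}\cdot 64=0$. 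Your identity would then give $0=\dim\Ext^1(M,M)=2\dim\End(M)$, i.e.\ $\End(M)=0$, which is absurd: the identity of a non-projective indecomposable is nonzero even in the stable category. So rigidity does \emph{not} see the quadratic form in this category --- this is exactly why the correspondence between rigid modules and roots is conjectural in the paper --- and the equivalence ``rigid $+$ real root $\Leftrightarrow$ brick'' that drives both halves of your argument collapses. A smaller symptom of the same confusion: rigid rank $2$ modules are not ``the quasi-length $2$ objects in tubes of rank $\geq 3$''; CM-rank and quasi-length are different quantities, and in the actual Auslander--Reiten quiver rank $2$ modules also occur at the mouths of tubes (see Figures~\ref{fig:rank6} and~\ref{fig:rank2}).

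Beyond this, the two steps you yourself flag as obstacles are precisely the ones left unproven: (i) the transfer of the brick property from $M$ to the cycled module $M'$, which you announce but do not carry out; and (ii) the existence (and uniqueness of the realization) of rigid modules for each real root, which you delegate to ``the explicit tube computations of Sections~\ref{sec:count-all} and~\ref{sec:count-all-48}'' --- but those computations \emph{are} the paper's proof of this theorem, so invoking them is circular for a blind attempt. For comparison, the paper's route is: Proposition~\ref{prop:rank2-implication} (with Proposition~\ref{prop:rank2-to-real} and Corollary~\ref{cor:rk2-indec-poset}) gives a combinatorial upper bound --- any rigid indecomposable rank $2$ module has profile $L_I|L_J$ with $I,J$ $3$-interlacing --- capping the count at $168$ for $(3,9)$ and $120$ for $(4,8)$; then all tubes containing rank $1$ and rank $2$ modules are constructed explicitly, using the canonical Auslander--Reiten sequences of Theorem~\ref{Teo rk1-new}, the quotient functor $\pi$, the $\tau$-periodicity of Section~\ref{sec:periodic}, and the tubular cluster category structure of types $(6,3,2)$ and $(4,4,2)$. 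The enumeration meets the bound ($168=2\cdot 84$ real-root modules for $(3,9)$, and $120=2\cdot 56+8$ for $(4,8)$, the $8$ being the imaginary-root modules above), and both assertions of the theorem --- two rigid indecomposables per real root, and closure under cycling $L_I|L_J\mapsto L_J|L_I$ --- are read off from the resulting lists rather than from any homological identity.
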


Furthermore, in Conjecture~\ref{conj:counting}, we give the number of rigid indecomposable 
rank 2 modules corresponding to 
real roots in the general case.

%
\section{Background}\label{sec:background}


In the following we recall the definition of $\B$, the category of Cohen-Macaualy modules $\CM(\B)$, and the relation between this category and root systems.

\subsection{The category $\CM (\B)$} We follow the exposition from \cite{BKM16} in order to introduce notation. 
Let $n$ and $k$ be integers such that  $1<k\leq\frac{n}{2}$. 
Let $C$ be  a circular graph  with vertices $C_0=\mathbb Z_n$ set clockwise around a
circle, and with the set of edges, $C_1$, also labeled by $\mathbb Z_n$, with edge $i$ joining vertices $i-1$ and $i$. For integers $a,b\in \{1,2,\dots, n\}$, we denote by $[a,b]$ the closed cyclic
interval consisting of the elements of the set $\{a, a +1, \dots, b\}$ reduced modulo $ n$.
Consider the quiver with vertices $C_0$
and, for each edge $i\in C_1$, a pair of arrows $x_i{\colon}i-1\to i$
and $y_i{\colon}i\to i-1$. Then we consider the quotient of the
path algebra over $\mathbb{C}$ of this quiver by the ideal generated by the
$2n$ relations $xy=yx$ and $x^k=y^{n-k}$. Here, we  interpret $x$ and $y$
as arrows of the form $x_i,y_i$ appropriately, and starting at any vertex. For example,  when $n=5$ we have the quiver

\begin{center}
\begin{tikzpicture}[scale=1]
\newcommand{\radius}{1.5cm}
\foreach \j in {1,...,5}{
  \path (90-72*\j:\radius) node[black] (w\j) {$\bullet$};
  \path (162-72*\j:\radius) node[black] (v\j) {};
  \path[->,>=latex] (v\j) edge[black,bend left=30,thick] node[black,auto] {$x_{\j}$} (w\j);
  \path[->,>=latex] (w\j) edge[black,bend left=30,thick] node[black,auto] {$y_{\j}$}(v\j);
}
\draw (90:\radius) node[above=3pt] {$5$};
\draw (162:\radius) node[above left] {$4$};
\draw (234:\radius) node[below left] {$3$};
\draw (306:\radius) node[below right] {$2$};
\draw (18:\radius) node[above right] {$1$};
\end{tikzpicture}
\end{center}

The completion $\B$ of this algebra coincides with the quotient of the completed path
algebra of the graph $C$, i.e.\ the doubled quiver as above,
by the closure of the ideal generated by the relations above (we view the completed path
algebra of the graph $C$
as a topological algebra via the $m$-adic topology, where $m$ is the two-sided ideal
generated by the arrows of the quiver, see \cite[Section 1]{DWZ08}). The algebra 
$\B$, that we will often denote by $B$ when there is no ambiguity, 
was introduced in \cite{JKS16}, Section 3.
Observe that $\B$ is isomorphic to $\mathrm{B}_{n-k,n}$, so we will always take $k\le \frac n 2$. 

\smallskip

The center $Z$ of $B$ is the ring of formal power series $\mathbb{C}[[t]]$,
where $t=\sum_{i=1}^n x_iy_i$.
The (maximal) Cohen-Macaulay $B$-modules are precisely those which are
free as $Z$-modules. Indeed, such a module $M$ is given by a representation
$\{M_i\,:\,i\in C_0\}$ of
the quiver with each $M_i$ a free $Z$-module of the same rank
(which is the rank of $M$, cf. \cite{JKS16}, Section 3).

\begin{defi}[\cite{JKS16}, Definition 3.5]
For any $B$-module $M$, if $K$ is the field of fractions of $Z$, we define its \emph{rank}
\[
 \rkk (M) = {\len}\bigl( M\otimes_Z K \bigr).
\]
\end{defi}

Note that $B\otimes_Z K\cong M_n ( K)$, 
which is a simple algebra. It is easy to check that the rank is additive on short exact sequences,
that $\rkk (M) = 0$ for any finite-dimensional $B$-module 
(because these are torsion over $Z$) and 
that, for any Cohen-Macaulay $B$-module $M$ and every idempotent $e_j$, $1\leq j\leq n$, $\rkk_Z(e_j M) = \rkk(M)$, so that, in particular, $\rkk_Z(M) = n  \rkk(M)$.

\begin{defi}[\cite{JKS16}, Definition 5.1] \label{d:moduleMI}
For any $k$-subset   $I$  of $C_1$, we define a rank $1$ $B$-module
\[
  L_I = (U_i,\ i\in C_0 \,;\, x_i,y_i,\, i\in C_1)
\]
as follows.
For each vertex $i\in C_0$, set $U_i=\mathbb C[[t]]$ and,
for each edge $i\in C_1$, set
\begin{itemize}
\item[] $x_i\colon U_{i-1}\to U_{i}$ to be multiplication by $1$ if $i\in I$, and by $t$ if $i\not\in I$,
\item[] $y_i\colon U_{i}\to U_{i-1}$ to be multiplication by $t$ if $i\in I$, and by $1$ if $i\not\in I$.
\end{itemize}
\end{defi}

The module $L_I$ can be represented by a lattice diagram
$\mathcal{L}_I$ in which $U_0,U_1,U_2,\ldots, U_n$ are represented by columns from
left to right (with $U_0$ and $U_n$ to be identified).
The vertices in each column correspond to the natural monomial 
$\mathbb C$-basis of $\mathbb C[t]$.
The column corresponding to $U_{i+1}$ is displaced half a step vertically
downwards (respectively, upwards) in relation to $U_i$ if $i+1\in I$
(respectively, $i+1\not \in I$), and the actions of $x_i$ and $y_i$ are
shown as diagonal arrows. Note that the $k$-subset $I$ can then be read off as
the set of labels on the arrows pointing down to the right which are exposed
to the top of the diagram. For example, the lattice picture $\mathcal{L}_{\{1,4,5\}}$
in the case $k=3$, $n=8$, is shown in the following picture   

\begin{figure}[H]
\begin{tikzpicture}[scale=0.8,baseline=(bb.base),
quivarrow/.style={black, -latex, thin}]
\newcommand{\seventh}{51.4} 
\newcommand{\circradius}{1.5cm}
\newcommand{\inradius}{1.2cm}
\newcommand{\outradius}{1.8cm}
\newcommand{\dotrad}{0.1cm} 
\newcommand{\bdrydotrad}{{0.8*\dotrad}} 
\path (0,0) node (bb) {}; 


\draw (0,0) circle(\bdrydotrad) [fill=black];
\draw (0,2) circle(\bdrydotrad) [fill=black];
\draw (1,1) circle(\bdrydotrad) [fill=black];
\draw (2,0) circle(\bdrydotrad) [fill=black];
\draw (2,2) circle(\bdrydotrad) [fill=black];
\draw (3,1) circle(\bdrydotrad) [fill=black];
\draw (3,3) circle(\bdrydotrad) [fill=black];
\draw (4,0) circle(\bdrydotrad) [fill=black];
\draw (4,2) circle(\bdrydotrad) [fill=black];
\draw (5,1) circle(\bdrydotrad) [fill=black];
\draw (6,0) circle(\bdrydotrad) [fill=black];
\draw (6,2) circle(\bdrydotrad) [fill=black];
\draw (7,1) circle(\bdrydotrad) [fill=black];
\draw (7,3) circle(\bdrydotrad) [fill=black];
\draw (8,2) circle(\bdrydotrad) [fill=black];
\draw (8,4) circle(\bdrydotrad) [fill=black];
\draw (8,0) circle(\bdrydotrad) [fill=black];


\draw [quivarrow,shorten <=5pt, shorten >=5pt, ultra thick] (0,2)-- node[above]{$1$} (1,1);
\draw [quivarrow,shorten <=5pt, shorten >=5pt] (1,1) -- node[above]{$1$} (0,0);
\draw [quivarrow,shorten <=5pt, shorten >=5pt, ultra thick] (2,2) -- node[above]{$2$} (1,1);
\draw [quivarrow,shorten <=5pt, shorten >=5pt] (1,1) -- node[above]{$2$} (2,0);
\draw [quivarrow,shorten <=5pt, shorten >=5pt, ultra thick] (3,3) -- node[above]{$3$} (2,2);
\draw [quivarrow,shorten <=5pt, shorten >=5pt] (2,2) -- node[above]{$3$} (3,1);
\draw [quivarrow,shorten <=5pt, shorten >=5pt] (3,1) -- node[above]{$3$} (2,0);
\draw [quivarrow,shorten <=5pt, shorten >=5pt, ultra thick] (3,3) -- node[above]{$4$} (4,2);
\draw [quivarrow,shorten <=5pt, shorten >=5pt] (4,2) -- node[above]{$4$} (3,1);
\draw [quivarrow,shorten <=5pt, shorten >=5pt] (3,1) -- node[above]{$4$} (4,0);
\draw [quivarrow,shorten <=5pt, shorten >=5pt, ultra thick] (4,2) -- node[above]{$5$} (5,1);
\draw [quivarrow,shorten <=5pt, shorten >=5pt] (5,1) -- node[above]{$5$} (4,0);
\draw [quivarrow,shorten <=5pt, shorten >=5pt, ultra thick] (6,2) -- node[above]{$6$} (5,1);
\draw [quivarrow,shorten <=5pt, shorten >=5pt] (5,1) -- node[above]{$6$} (6,0);
\draw [quivarrow,shorten <=5pt, shorten >=5pt] (6,2) -- node[above]{$7$} (7,1);
\draw [quivarrow,shorten <=5pt, shorten >=5pt] (7,1) -- node[above]{$7$} (6,0);
\draw [quivarrow,shorten <=5pt, shorten >=5pt, ultra thick] (7,3) -- node[above]{$7$} (6,2);
\draw [quivarrow,shorten <=5pt, shorten >=5pt] (7,3) -- node[above]{$8$} (8,2);
\draw [quivarrow,shorten <=5pt, shorten >=5pt] (8,2) -- node[above]{$8$} (7,1);
\draw [quivarrow,shorten <=5pt, shorten >=5pt, ultra thick] (8,4) -- node[above]{$8$} (7,3);
\draw [quivarrow,shorten <=5pt, shorten >=5pt] (7,1) -- node[above]{$8$} (8,0);

\draw [dotted] (0,-2) -- (0,2);
\draw [dotted] (8,-2) -- (8,4);

\draw [dashed] (4,-2) -- (4,-1);
\end{tikzpicture}
\caption{Lattice diagram of the module $L_{\{1,4,5\}}$} \label{Lattice}
\end{figure}
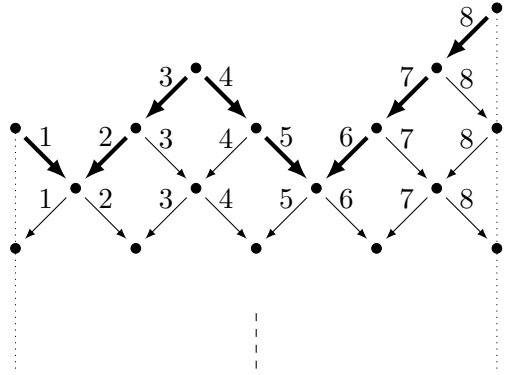

\vspace{2mm}
We see from Figure \ref{Lattice} that the module $L_I$ is determined by its upper boundary, denoted  by the thick lines, 
which we refer to as the rim of the module $L_I$ (this is why call the $k$-subset $I$ as the rim of $L_I$). 
Throughout this paper we will identify a rank 1 module $L_I$ with its rim. Moreover, most of the time we will omit the arrows in the rim of $L_I$ and represent it as an undirected graph. 

We say that $i$ is a {\em peak} of the rim $I$ if $i\notin I$ and $i+1\in I$.  In the above example, the peaks of $I=\{1,4,5\}$ are $3$ and $8$. 

\begin{rem} {\rm
We identify the end points of a rim $I$. Unless specified otherwise, we will assume that the leftmost vertex is  labeled by $n$, and in this case, we may omit labels on the edges of the rim. Looking from left to right, the number of  downward edges in the rim is $k$ (these are the edges labeled by the elements of $I$), and the number of upward edges is  $n-k$ (these are the edges labeled by the elements of $[1,n] \setminus I$). }
\end{rem}

\begin{prop}[\cite{JKS16}, Proposition 5.2]
Every rank $1$ Cohen-Macaulay $B$-module is isomorphic to $L_I$
for some unique $k$-subset $I$ of $C_1$.
\end{prop}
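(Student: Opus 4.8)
The plan is to convert a rank $1$ Cohen--Macaulay module into the combinatorial datum of a $k$-subset by reading off valuations of the structure maps, and then to recover $L_I$ by a single change of basis. As recorded above, a Cohen--Macaulay $B$-module $M$ is a representation $\{M_i : i\in C_0\}$ with each $M_i = e_iM$ free over $Z=\mathbb{C}[[t]]$ of rank $\rkk(M)$; so $\rkk(M)=1$ forces every $M_i$ to be free of rank $1$. Fixing a $Z$-basis of each $M_i$ identifies it with $Z$, and since a $Z$-linear endomorphism of $Z$ is multiplication by a scalar, the maps $x_i\colon M_{i-1}\to M_i$ and $y_i\colon M_i\to M_{i-1}$ become multiplication by elements $a_i,b_i\in Z$.

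The next step is to exploit the centre. Because $t=\sum_j x_jy_j$ acts as multiplication by the uniformizer $t$ and $e_i\,t\,e_i = x_iy_i$, we obtain $a_ib_i=t$ for every $i$ (this also makes the relation $xy=yx$ automatic, since both loops at $i$ then act as $t$). As $Z$ is a discrete valuation ring with valuation $v$ and $v(t)=1$, the equality $v(a_i)+v(b_i)=1$ leaves exactly two possibilities at each edge: either $a_i$ is a unit and $b_i=t\cdot(\text{unit})$, or $a_i=t\cdot(\text{unit})$ and $b_i$ is a unit. I would then set $I:=\{\,i\in C_1 : v(a_i)=0\,\}$, the set of edges along which $x_i$ is invertible; this matches the recipe of Definition~\ref{d:moduleMI}.

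It remains to pin down $|I|=k$ and to normalize $M$ to $L_I$. Writing $a_i=u_it^{\epsilon_i}$ with $u_i\in Z^{\times}$ and $\epsilon_i\in\{0,1\}$ (so $\epsilon_i=0$ exactly when $i\in I$), and hence $b_i=u_i^{-1}t^{1-\epsilon_i}$, I would evaluate the relation $x^k=y^{n-k}$ at one vertex: both sides go between the same pair of $M_i$'s, and the relation reads $\prod_{j}a_j=\prod_{j'}b_{j'}$, where $j$ runs over the $k$ edges traversed by $x^k$ and $j'$ over the complementary $n-k$ edges traversed by $y^{n-k}$. Comparing $t$-orders gives $\sum_i\epsilon_i=n-k$, whence $|I|=n-\sum_i\epsilon_i=k$; comparing the unit parts gives $\prod_{i=1}^n u_i=1$. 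Finally, a basis change $e_i\mapsto\lambda_ie_i$ with $\lambda_i\in Z^{\times}$ rescales $a_i$ by $\lambda_i\lambda_{i-1}^{-1}$, so to reach the normal form $a_i=t^{\epsilon_i}$, $b_i=t^{1-\epsilon_i}$ of $L_I$ I must solve $\lambda_i/\lambda_{i-1}=u_i^{-1}$ around the circular quiver; this system is solvable precisely because the telescoping obstruction $\prod_i u_i$ equals $1$. Thus $M\cong L_I$. For uniqueness I would note that each $v(a_i)$ is invariant under isomorphism (an isomorphism multiplies $a_i$ by a unit), so $I$ is an isomorphism invariant of $M$ and is therefore determined.

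The step I expect to be the main obstacle is exactly this simultaneous normalization around the cycle: rescaling the individual bases is a cocycle condition whose solvability is not automatic, and the whole argument turns on the observation that the single relation $x^k=y^{n-k}$ supplies both the cardinality constraint $|I|=k$ and the precise multiplicative identity $\prod_i u_i=1$ that annihilates the obstruction. Everything else, including the verification that $xy=yx$ imposes nothing beyond $a_ib_i=t$, is routine.
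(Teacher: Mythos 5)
Your argument is correct and complete: the identifications $M_i\cong Z$, the relation $a_ib_i=t$ forcing exactly one of the two structure maps on each edge to be a unit, the use of $x^k=y^{n-k}$ to get both $|I|=k$ and the vanishing of the cocycle obstruction $\prod_i u_i=1$, and the final rescaling are all sound. The paper itself does not prove this statement but imports it from \cite{JKS16}, and your proof is essentially the standard argument given there, so there is nothing further to reconcile.
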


Every $B$-module has a canonical endomorphism given by multiplication by $t\in Z$.
For ${L}_I$ this corresponds to shifting $\mathcal{L}_I$ one step downwards.
Since $Z$ is central, $\Hom_B(M,N)$ is
a $Z$-module for arbitrary $B$-modules $M$ and $N$.

If $M,N$ are free $Z$-modules, then so is $\Hom_B(M,N)$. In particular, for rank 1 
Cohen-Macaulay $B$-modules $L_I$ and $L_J$, $\Hom_B(L_I,L_J)$ is a free 
module of rank 1 
over $Z=\mathbb C[[t]]$, generated by the canonical map given by placing the 
lattice of $L_I$ inside the lattice of $L_J$ as far up as possible so that no part of the rim of $L_I$ is strictly above the rim of $L_J$ \cite[Section 6]{BKM16}.


The algebra $B$ has $n$ indecomposable projective left modules $P_j=Be_j$, corresponding to the vertex idempotents $e_j\in B$, for $j\in C_0$.
Our convention is that representations of the quiver correspond to 
left  $B$-modules. The projective indecomposable $B$-module $P_j$ is the rank 1 module $L_I$, where $I=\{j+1, j+2, \dots, j+k\}$, so we represent projective indecomposable modules as in the following picture, where $P_5$ is pictured ($n=5$, $k=3$):
\begin{center}
\begin{tikzpicture}[scale=0.8]
\foreach \j in {0,...,5}
  {\path (\j,3.5) node (a) {$\j$};}
\path (0,3) node (a0) {$\bullet$}; \path (5,2) node (a5) {$\bullet$}; 
\foreach \v/\x/\y in
  {a1/1/2, a2/2/1, a3/3/0, a4/4/1, b0/0/1, b1/1/0, b2/2/-1, b4/4/-1, b5/5/0, c0/0/-1}
  {\path (\x,\y) node (\v) {$\bullet$};}
\foreach \j in {1,3,5}
  {\path (\j,-1.5) node {$\vdots$};}
\foreach \t/\h in
  {a0/a1, a1/a2, a2/a3, b0/b1, b1/b2, a3/b4, a4/b5}
  {\path[->,>=latex] (\t) edge[black,thick] node[black,above right=-2pt] {$x$} (\h);}
\foreach \t/\h in
  {a4/a3, a5/a4, b5/b4, a3/b2, a2/b1, a1/b0, b1/c0}
  {\path[->,>=latex] (\t) edge[black,thick] node[black,above left =-3pt] {$y$} (\h);}
\end{tikzpicture}
\end{center}

\begin{defi}
A pair $I, J$ of $k$-subsets of $C_1$ is said to be non-crossing (or weakly separated) if there are no elements $a, b, c, d$, cyclically ordered around $C_1$, such that $a,c \in I \setminus J$ and $b,d \in J \setminus I$.
\end{defi}

\begin{defi}\label{def-rigid}
A $B$-module is \emph{rigid} if $\Ext^1_B (M,M)=0$.
\end{defi}

If $I$ and $J$ are non-crossing $k$-subsets, then $\Ext_{B}^1(L_I,L_J)=0$, in particular, rank 1 modules are rigid (see \cite[Proposition 5.6]{JKS16}).

\begin{notation} 
Every rigid indecomposable $M$ of rank $n$ in $\CM (B)$ has a filtration having factors 
$L_{I_1},L_{I_{2}},\dots, L_{I_n}$ of rank 1. 
This filtration is noted in its \emph{profile}, 
$\pr (M) = I_1 | I_2|\ldots | I_n$, \cite[Corollary 6.7]{JKS16}.
\end{notation}


The category $\CM (B)$ provides a categorification for the cluster structure of 
Grassmannian coordinate rings. As we will discuss later, the stable category $\uCM (B)$ 
is 2-Calabi-Yau. Maximal non-crossing collections of $k$-subsets give rise to cluster-tilting 
objects $T$ as the corresponding rank 1 modules are pairwise ext-orthogonal. 
Given a maximal collection of non-crossing $k$-sets $\mathcal{I}$ (including the projectives, 
i.e.\  $k$-sets consisting of a single interval), 
the direct sum 
$T= \oplus_{I \in \mathcal{I}} L_{I}$ corresponds to an 
alternating strand diagram  
\cite{Postnikov} 
whose associated quiver is 
an example of a \emph{dimer model with boundary} \cite[Section 3]{BKM16}. 
If we forget its frozen vertices (the vertices corresponding to projective indecomposables) 
we obtain a 
quiver with potential $(Q,P)$ 
encoding the endomorphism algebra $\End_{\uCM} (T)$ as a finite-dimensional 
Jacobian algebra $J(Q,P)$ in the sense of \cite{DWZ08}. 

\begin{remark}\label{rem:cto-exists}
Any given $k$-subset $I$ can be completed to a maximal non-crossing 
collection $\mathcal{I}$. The arrows in the quiver $Q$ of 
$\End_{\uCM} (T)$ represent morphims in $\Hom_{\uCM} (L_I,L_J)$ that do not factor  through $L_U$ with $U \in \mathcal{I}$. 
Note that the quiver $Q$  has no loops. 
\end{remark}


%
\subsection{Root combinatorics}\label{sec:roots}

Here we recall the connection between indecomposable modules of  $\CM(\B)$ for 
$k$ and $n$ as above and 
roots for an associated Kac-Moody algebra, as explained in~\cite{JKS16}. 

For $(k,n)$ let $J_{k,n}$ be the tree obtained by drawing a Dynkin diagram of 
type $A_{n-1}$, labeling the nodes $1,2,\dots, n-1$ and adding a node $n$ with 
an edge to node $k$. We consider positive roots for 
the associated Kac-Moody algebra, denoting the simple root associated with node $i$ 
by $\alpha_i$ 
for $i=1,\dots, n-1$ and the simple root associated with $n$ by $\beta$. 
For $k=2$, the resulting diagram $J_{k,n}$ is a Dynkin diagram of type 
$D_n$. 
\begin{center}
\includegraphics[height=1cm]{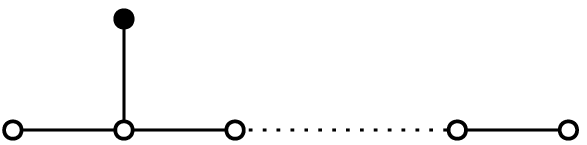}
\end{center}
For $n=6,7,8$ and $k=3$, we obtain 
$E_6$, $E_7$ and $E_8$ respectively. 
\begin{center}
\includegraphics[height=1cm]{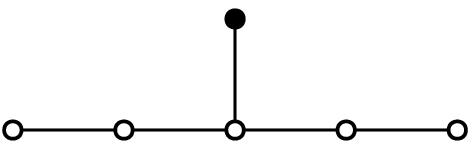}
\hskip 1cm
\includegraphics[height=1cm]{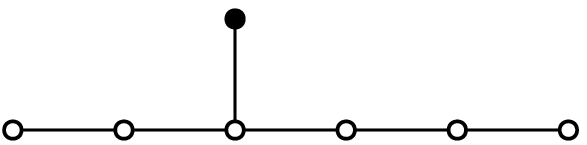}
\hskip 1cm
\includegraphics[height=1cm]{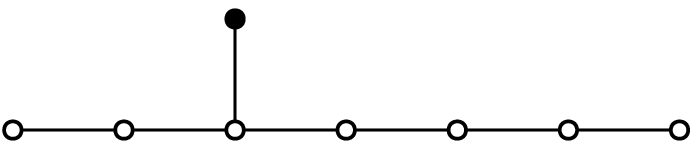}
\end{center}
The diagrams $J_{4,8}$ and $J_{3,9}$ are 
$\widetilde{E}_7$ and $\widetilde{E}_8$, respectively: 
\begin{center}
\includegraphics[height=1cm]{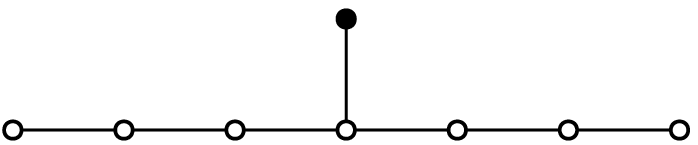}
\hskip 1cm
\includegraphics[height=1cm]{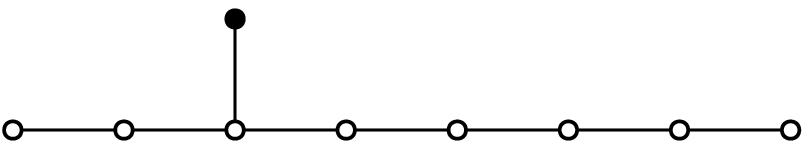}
\end{center}
There is a grading on the roots of the corresponding Kac-Moody algebra, where 
the {\em degree} is given by the coefficient of the root at $\beta$, i.e. at the $n$-th node, the 
black node in the figures.  

Zelevinsky conjectured (\cite{Zel-conj})
that the number of degree $d$ cluster variables is 
equal to $d$ times the number of real roots for $J_{k,n}$ of degree $d$. 
In the finite types, this is known to 
hold, \cite[Theorems 6,7,8]{Scott06}, whereas in the infinite cases it 
does not hold in this generality. It is expected that one needs to restrict to 
cluster variables which are associated to real roots. 
In this spirit, one can ask whether the number of 
rank $d$ rigid indecomposable modules of $\CM(\B)$ is 
$d$ times the number of real roots for $J_{k,n}$ of degree $d$. 
Jensen et al. 
confirmed that this holds in the finite type cases \cite[Observation 2.3]{JKS16}, see also 
Example~\ref{ex:fin-type} below. 

We recall a map  
from indecomposable modules in $\CM(B)$ to roots for $J_{k,n}$ via 
a map from $\ind\CM(B)$ to $\mathbb{Z}^n$ from 
\cite[Section 8]{JKS16}: If $M=L_1|L_2|\dots|L_d$ is indecomposable, let 
$\underline{a}=\underline{a}(M)=(a_1,\dots, a_n)$ be the vector where 
$a_i$ is the multiplicity of $i$ in $L_1\cup \dots \cup L_d$, for $i=1,\dots,n$. 
Let $e_1,\dots, e_n$ be the standard basis vectors for $\mathbb{Z}^n$. 
Then we can associate with $M$ a root $\varphi(M)$ for $J_{k,n}$ via the correspondence 
$\alpha_i \longleftrightarrow -e_i+e_{i+1}$, $i=1,\dots, n-1,$ and 
$\beta \longleftrightarrow e_1+e_2+\dots + e_k$. 

Note that the image of $M$ under $\underline{a}$ is in the sublattice 
$\mathbb{Z}^n(k)=\{a\in \mathbb{Z}^n\mid k\mbox{ divides }\sum a_i\}$ 
and that $\varphi(M)$ is a root 
of degree $d$. 
Via these correspondences, we 
can identify the lattice $\mathbb{Z}^n(k)$ with the root lattice of the 
Kac-Moody algebra of $J_{k,n}$ and we have the quadratic form 
$q(\underline{a})= \sum_i a_i^2 + \frac{2-k}{k^2}(\sum_i a_i)^2$ on $\mathbb{Z}^n(k)$ 
which characterizes roots for $J_{k,n}$ as the vectors with $q(\underline{a})\le 2$. 
Among them, the vectors with $q(\underline{a})=2$ correspond to real roots. 

Conjecturally, rigid indecomposable modules correspond to roots 
and if a module belongs to a homogenous tube, the associated root is imaginary. 
In the finite types, the correspondence between rigid indecomposable modules and 
(real) roots is confirmed. 
Here, we  initiate the study of 
infinite representation types, and in particular, we study rank 2 indecomposable CM-modules. We show that, in the tame cases, for every real root of $J_{k,n}$ there exist two rigid indecomposable rank 2 modules (see Theorem \ref{te:cyclemods}). Note that for $J_{4,8}$ there exist  8 rigid indecomposable rank 2 modules whose associated 
root is imaginary, see~\cite[Figure 13]{JKS16}.

\begin{ex}\label{ex:fin-type} 
For $k=2$, there are no indecomposable modules of rank 2. The diagram $J_{2,n}$ is a Dynkin 
diagram of type $D_n$ for which there are no roots of degree 2. 

Let $k=3$. \\
(i) The diagram $J_{3,6}$ is a  Dynkin diagram of type $E_6$. The only root where node 6 has degree 
$2$ is the root $\alpha_1+2\alpha_2+3\alpha_3+2\alpha_4+\alpha_5+ 2\beta$. 
It is known that there are exactly two degree $2$ cluster variables, 
cf.~\cite[Theorem 6]{Scott06}.  
On the other hand, the only rank 2 (rigid) indecomposable modules in this case are 
$L_{135}|L_{246}$ and $L_{246}|L_{135}$, cf.~\cite[Figure 10]{JKS16}. \\
(ii)
The diagram $J_{3,7}$ is a Dynkin diagram of type $E_7$. The Lie algebra of type 
$E_7$ has 7 roots of degree 2.  
There are 14 cluster variables of degree 2 (\cite[Theorem 7]{Scott06}) 
and, correspondingly, 14 rank 2 (rigid) indecomposable modules for $(3,7)$.
These can be found in~\cite[Figure 11]{JKS16}. \\
(iii)
The diagram $J_{3,8}$ is of type $E_8$, there are 28 roots of degree 2 in the corresponding root system. 
The number of cluster variables of degree 2, and the number of rank 2 (rigid) 
indecomposable modules is 56, 
cf.~\cite[Figure 12]{JKS16}. 
\end{ex}

\begin{remark}\label{rm:roots-tame}
For $J_{3,9}$ there are 84 real roots of degree 2. 
For $J_{4,8}$ there are 56 real roots of degree 2. 
One can find all these roots considering the classical result \cite[Theorem 5.6]{Kac90} 
and playing the so-called {\em find the 
highest root game} which is attributed to B. Kostant by A. Knutson \cite{KK}.
\end{remark}

%
\section{Homological properties} \label{sec:results}

The algebra $B=\B$ is Gorenstein, i.e.\ it is left and right noetherian and of finite (left and right) injective dimension. Hence, the category $\CM (B)$ is Frobenius and the projective-injective objects are the projective $B$-modules. The stable category $\uCM (B)$ has a triangulated structure in which the suspension $[ 1 ]$ coincides with the formal inverse of $\Omega$ \cite{Buc,H}.

Let $\Pi_{k,n}$ be the quotient of the preprojective algebra of type $\mathbb{A}_{n-1}$ over the ideal $\langle x^k ,y^{n-k} \rangle$. This finite dimensional $\mathbb{C}$-algebra is Gorenstein of dimension 1. The category $\CM (\Pi_{k,n})$ is equivalent to the exact subcategory $\Sub Q_k$ defined 
in~\cite{GLS08}. 
Analogously to $\CM (B)$, $\CM (\Pi_{k,n})$ is Frobenius and the stable 
category $\uCM (\Pi_{k,n})$ has a triangulated structure in which $[ 1 ]$ coincides with the formal inverse of $\Omega$, denoted by $(\Omega)^{-1}$. 
This formal inverse is not the co-syzygy $\Omega^{-1}$ since the algebras $\B$ and $\Pi_{k,n}$ are not self-injective, hence the 
slightly different notation.

By \cite[Section 4]{JKS16}, there is a (quotient) exact functor 
$\pi \colon \CM(B) \to  \CM (\Pi_{k,n})$ setting a one-to-one correspondence 
between the indecomposable modules in $\CM (\B)$ other than $P_n$ and the 
indecomposable modules in $\CM (\Pi_{k,n})$. 
This functor restricts to a triangle equivalence 
$\underline{\pi} \colon \uCM(B) \to  \uCM (\Pi_{k,n})$. 
By construction the standard triangles of $\uCM(\B)$, obtained via push-outs, are of the form
\[ A \to B \to C \to A[1],\]
where $0 \to A \to B \to C \to 0$ is a short exact sequence in $\CM (B)$. The functor $\pi$ takes exact sequences to exact sequences. This implies that we can use the additivity of the dimension vector $\dimv$ on $\CM (\Pi_{k,n})$ to reconstruct triangles, in particular, Auslander-Reiten triangles. We may refer to an Auslander-Reiten triangle 
\[ A \to B \to \tau^{-1}A  \to A[1]\]
also as the associated short exact sequence $A \hookrightarrow B \twoheadrightarrow \tau^{-1}A$.

The category $\uSub Q_k$ is triangulated and 2-Calabi-Yau \cite[Proposition 3.4]{GLS08}. Denote by $[1]_\Sub$ the shift in this category. Notice that $[1]_\Sub$ can be interpreted as the formal inverse of the syzygy when we are in $\uCM (\Pi_{k,n})$, see \cite[Remark 4.2]{JKS16}. By \cite{reiten2002noetherian}, $\tau [1]_\Sub \simeq S$, where $S$ is the Serre functor. It also holds that $S=[2]_\Sub$ from the 2-Calabi Yau condition. Therefore, $\tau = [1]_\Sub $ over $\uSub Q_k$, and this implies that $\Omega = \tau^{-1}$ in the category $\uCM (\Pi_{k,n})$. Hence, by the equivalence $\underline{\pi}$, we have $\Omega = \tau^{-1}$ in $\uCM (B)$.

\begin{figure}[h]
\[
\includegraphics[width=4cm]{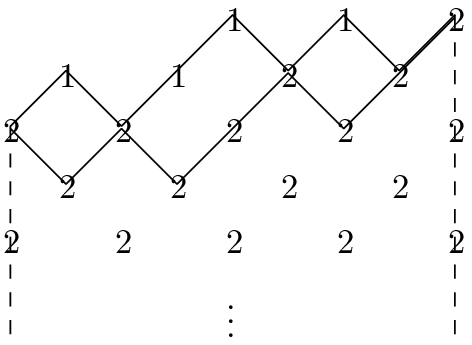}
\hspace{1.5cm}
\includegraphics[width=4cm]{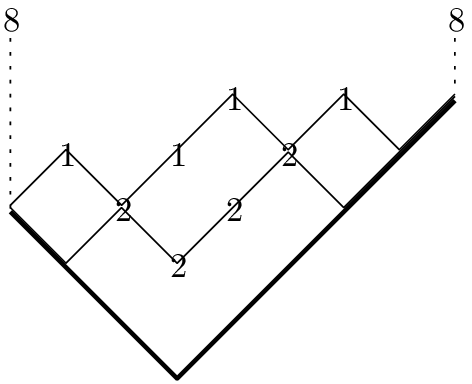}
\hspace{1.5cm}
\includegraphics[width=2.2cm]{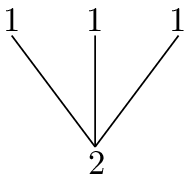}
\]
\caption{Lattice diagram of a module in $\CM(B_{3,8})$ 
and its image in $\CM(\Pi_{3,8})$ under $\pi$, and the corresponding quotient poset $(1^3,2)$}\label{fig:M-pi-of-M}
\end{figure}

\begin{ex}\label{ex:pi-rk2}
Let $(k,n)=(3,8)$. Consider the rank 2 module $M=L_I\mid L_J$ with $I=\{2,5,7\}$ and 
$J=\{1,3,6\}$. As for rank 1 modules, it is convenient to view higher rank modules as lattice 
diagrams. The lattice diagram of $M$ is drawn on the left hand side in 
Figure~\ref{fig:M-pi-of-M}. 
The image $\pi(M)$ is obtained by taking the quotient of $M$ by the projective $P_8$.
In particular, we can obtain the dimension vector of the $\Pi_{k,n}$-module $\pi(M)$ by 
cutting out the vertices corresponding to the lattice of $P_8$ from the lattice  of $L_I\mid L_J$ as in Figure \ref{fig:M-pi-of-M} (center)
and considering the multiplicities of the vertices 1 to $n-1$.   
\end{ex}

There is a covering functor from the category of finitely generated complete poset 
representations for a poset $\Gamma$ \cite[Chapter 13]{S93}, where 
$\Gamma$ is a cylindrical covering of the circular quiver of $B$, to 
the category $\CM(\B)$. 
The complete poset representations have a vector space at each vertex of $\Gamma$ 
and all arrows are subspace inclusions. 
If $\widetilde{M}$ is a complete poset representation for $\Gamma$, 
it can be identified with a finite subspace configuration of a vector space, say $M_*$, 
with $\dim M_*=\rk \widetilde{M}$. 

It is important that $\widetilde{M}$, and therefore $M \in \CM (B)$, can be identified 
with a pull-back from a finite quotient poset of $\Gamma$ 
and the 
indecomposability of $M$ can be deduced from the indecomposability of 
$\widetilde{M}$. 
Moreover when $M$ is rigid indecomposable, $\widetilde{M}$ is unique up to a grade 
shift, \cite[Lemma 6.2, Remark 6.3]{JKS16}.

\begin{remark}\label{rem:poset-indec} 
Let $M$ be an indecomposable of rank 2 in $\CM (B)$. 
Then the corresponding finite quotient poset 
has to be of the form $(1^r,2)$ for some $r$. 
Since $(1,2)$ and $(1^2,2)$ are dimension vectors for the quivers $1 \rightarrow 2$ and 
$1\rightarrow 2 \leftarrow 3$ respectively, of Dynkin type $A_2$ and $A_3$ respectively, 
the corresponding representations cannot be indecomposable. Hence $r \geq 3$. 
These posets are precisely the ones corresponding to indecomposable subspace 
configurations of rank 2.
\end{remark}

\begin{ex}\label{ex:poset}
Let $M=L_I\mid L_J$ be a rank 2 module in $\CM(\B)$ with $I\ne J$.
Then its poset is of 
the form $(1^r,2)$ for $0< r\le k$. 
The modules $M=L_I\mid L_J$ with $I=\{ 2,5,7\}$ and $J=\{1,3,6\}$ 
from Example~\ref{ex:pi-rk2} and the first module in Figure~\ref{fig:3configs} 
have poset $(1^3,2)$. 
Examples for the poset $(1,2)$ are the 
 modules in Figure~\ref{fig:2configs} and the last module in Figure~\ref{fig:3configs}. 
An example for $(1^2,2)$ is the second 
module in Figure~\ref{fig:3configs}. \\
\end{ex}


\subsection{Extension spaces between rank 1 modules} 

Let $I$ be a rim, and let $d_i$ and $l_i$ respectively be the lengths of disjoint intervals of $I$ and the lengths of the corresponding intervals of the complement of $I$ in $\{1,2,\dots,n\}$. In other words, let $d_i$ and $l_i$ denote the lengths of downward and upward slopes, respectively, of $I$. Let $m$ denote the minimum of the numbers $d_i$ and $l_i$.

\begin{prop}
Let $I$ be a rim with two peaks and let $J$ be any rim. If $I$ and $J$ are crossing, then 
$$\Ext^1(L_I, L_J)\cong \mathbb C[[t]]/ (t^a),$$ 
where $a$ is less or equal to the minimum $m$ of the lengths of the slopes (both downward and upward) of $I$. 
\end{prop}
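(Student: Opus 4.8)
The plan is to pass to the stable category and exploit that the two-peak hypothesis forces the syzygy of $L_I$ to have rank $1$, which is exactly what makes the answer cyclic. Since $I$ has two peaks $p_1,p_2$, the top of $L_I$ is supported precisely at these peaks, so the projective cover is $P_{p_1}\oplus P_{p_2}$ and we get a short exact sequence
\[
0 \to \Omega L_I \to P_{p_1}\oplus P_{p_2} \to L_I \to 0 .
\]
Additivity of $\rkk$ on short exact sequences gives $\rkk(\Omega L_I)=1+1-1=1$. As a syzygy in the Frobenius category $\CM(B)$, the module $\Omega L_I$ is again Cohen--Macaulay, and a rank $1$ Cohen--Macaulay module is $L_{I'}$ for a unique rim $I'$ (minimality of the cover excludes projective summands, which would anyway be free of rank $0$). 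Because $B$ is Gorenstein with the projectives as the projective--injective objects, $\Ext^1_B(L_I,L_J)\cong\underline{\Hom}_B(\Omega L_I,L_J)=\underline{\Hom}_B(L_{I'},L_J)$. Now by \cite[Section 6]{BKM16}, $\Hom_B(L_{I'},L_J)$ is free of rank $1$ over $Z=\mathbb{C}[[t]]$, generated by the canonical map $g$ that places $\mathcal{L}_{I'}$ as high as possible inside $\mathcal{L}_J$. The maps factoring through a projective--injective form a $Z$-submodule of $\Hom_B(L_{I'},L_J)$, hence an ideal $(t^a)g$ of $\mathbb{C}[[t]]\,g$, so
\[
\Ext^1_B(L_I,L_J)\cong \mathbb{C}[[t]]\,g\big/(t^a)\,g\cong \mathbb{C}[[t]]/(t^a).
\]
This is the asserted shape; it is the two-peak hypothesis that produces cyclicity, since three or more peaks would make $\Omega L_I$ of rank $\ge 2$ and the $\Hom$ no longer free of rank $1$. (That $I$ and $J$ cross is what keeps $a\ge 1$, via \cite[Proposition 5.6]{JKS16}; but the statement only needs the bound.)

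For the bound $a\le m$ I would abandon the syzygy and instead read the exponent directly from an explicit extension, since the quantity $m$ is phrased in terms of the slopes of $I$. I would realize a class of $\Ext^1_B(L_I,L_J)$ by the representation $E$ whose vertex spaces are $U^J_i\oplus U^I_i$ and whose structure maps are upper triangular, with the off-diagonal entries forming a cocycle; then $\Ext^1_B(L_I,L_J)=Z^1/B^1$, where $Z^1$ is cut out by the linearizations of $x_iy_i=y_ix_i$ and $x^k=y^{n-k}$ and $B^1$ consists of the coboundaries coming from maps $U^I_i\to U^J_i$. The key observation is that along the shortest slope of $I$, of length $m$, one of the two families of structure maps is multiplication by $1$ at $m$ consecutive edges on both the $L_I$- and the $L_J$-components; this lets one transport and eliminate the cocycle along that run, at the cost of a factor $t^m$. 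Concretely, I would show $t^m Z^1\subseteq B^1$, i.e. that $t^m$ annihilates $Z^1/B^1$, which yields $a\le m$.

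The step I expect to be the genuine obstacle is precisely this last elimination: controlling exactly which multiples $t^s g$ of the canonical map become null-homotopic, equivalently verifying that the wrap-around relation $x^k=y^{n-k}$ together with the length-$m$ run of identity maps forces every cocycle into $B^1$ after multiplication by $t^m$, and no sooner than dictated by the shallowest pocket. The bookkeeping is delicate because the two slopes of $I$ of lengths $d_1,d_2$ and the two complementary slopes of lengths $l_1,l_2$ interact through the global relation, so one must check that the shortest of the four is the operative one and that the remaining relations do not impose a stronger (or weaker) vanishing. I would organize this as a direct, if slightly tedious, computation on the lattice diagrams, and cross-check the resulting $a$ against the $2$-Calabi--Yau symmetry $\dim_{\mathbb{C}}\Ext^1_B(L_I,L_J)=\dim_{\mathbb{C}}\Ext^1_B(L_J,L_I)$ as a sanity test.
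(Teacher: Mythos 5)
Your argument for the \emph{cyclicity} of $\Ext^1(L_I,L_J)$ is correct and takes a genuinely different route from the paper. The paper works entirely inside the machinery of \cite[Theorem 3.1]{BB}: placing $J$ under $I$ produces (because $I$ has only four slopes and the rims cross) exactly two boxes, so the presentation matrix $D^*$ is $2\times 2$ with entries $\pm t^{m_i}$, and both the cyclic shape and the bound on $a$ are read off from that single matrix. Your replacement of the first half --- two peaks $\Rightarrow$ projective cover $P_{p_1}\oplus P_{p_2}$ $\Rightarrow$ $\rkk(\Omega L_I)=1$ $\Rightarrow$ $\Omega L_I\cong L_{I'}$ $\Rightarrow$ $\Ext^1(L_I,L_J)\cong\underline{\Hom}(L_{I'},L_J)$ is a quotient of the free rank-one $Z$-module $\Hom(L_{I'},L_J)$ by an ideal $(t^a)$ --- is clean, conceptual, and consistent with what the paper itself does elsewhere (it is essentially the observation opening the proof of Proposition~\ref{prop:I-omegaI} that a two-peak rim has a rank~$1$ syzygy). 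What it buys is a proof of cyclicity that needs no trapezium bookkeeping; what it loses is any grip on the value of $a$, which is exactly where your write-up stops being a proof.

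The bound $a\le m$ is a genuine gap, and you say so yourself. You only outline a cocycle-elimination strategy ($t^mZ^1\subseteq B^1$) without carrying it out, and the one concrete mechanism you offer is wrong as stated: along a slope of $I$ of length $m$ the structure maps of the $L_I$-component are indeed multiplication by $1$ at $m$ consecutive edges, but the maps of the $L_J$-component at those same edges are governed by $J$, not by $I$, so there is no reason for them to be identities on ``both components.'' The correct mechanism (the one the paper uses) is that when $\mathcal{L}_J$ is placed under $\mathcal{L}_I$, the lateral side of each trapezium measures how far the two rims diverge, and this divergence along a given slope of $I$ cannot exceed the length of that slope; hence each exponent $m_i$ in $D^*$ satisfies $m_i\le$ (length of the corresponding slope of $I$), and $a=\min_i m_i\le m$. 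To complete your proof you would either need to import this trapezium estimate from \cite{BB}, or, staying within your stable-category framework, exhibit explicitly a factorization of $t^m g$ (for $g$ the canonical generator of $\Hom(L_{I'},L_J)$) through a projective; neither is done in the proposal.
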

\begin{proof}
Since there are only four slopes on the rim $I$, when $J$ is placed underneath $I$ in the computation of $\Ext^1(L_I, L_J)$, as in  \cite[Theorem 3.1]{BB}, there are at most four trapezia appearing. Since we assumed that the rims are crossing, then there are exactly four trapezia with nontrivial lateral sides, and hence, there are exactly two boxes (each box consisting of two trapezia). Each of the lateral sides of the trapezia involved is of length at most equal to the length of the corresponding slope of the rim $I$. It follows that the matrix $D^*$ is of the form 
$$
\left[\begin{array}{cc}-t^{m_1} & t^{m_2} \\t^{m_3} & -t^{m_4}\end{array}\right],
$$
where the numbers $m_i$ denote the lengths  of the lateral sides of the trapezia used to compute the extension space 
$\Ext^1(L_I, L_J)$ \cite{BB}. If we choose $a$ to be the minimal $m_i$, then the proposition follows. 
\end{proof}

\begin{cor} If $I$ and $J$ are crossing rims with two peaks each, then 
$$\Ext^1(L_I, L_J)\cong \mathbb C[[t]]/ (t^a),$$ 
where $a$ is less or equal to the minimum of the lengths of all slopes of $I$ and $J$.
\end{cor}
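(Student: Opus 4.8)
The plan is to apply the preceding Proposition twice---once to the ordered pair $(I,J)$ and once to $(J,I)$---and then to equate the two resulting exponents using the symmetry of $\Ext^1$ coming from the $2$-Calabi--Yau structure of $\uCM(B)$.

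First I would note that the hypotheses of the Proposition are satisfied in both directions: $I$ and $J$ are each rims with exactly two peaks, and being crossing is a symmetric condition on the pair. Applying the Proposition to $(I,J)$ gives $\Ext^1(L_I,L_J)\cong \mathbb C[[t]]/(t^a)$ with $a\le m_I$, where $m_I$ is the minimum of the lengths of the four slopes of $I$. Applying it to $(J,I)$ gives $\Ext^1(L_J,L_I)\cong \mathbb C[[t]]/(t^b)$ with $b\le m_J$, where $m_J$ is the minimum of the lengths of the slopes of $J$. Since the minimum of the lengths of all slopes of $I$ and $J$ is exactly $\min(m_I,m_J)$, it suffices to prove that $a=b$, for then $a=b\le \min(m_I,m_J)$.

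To obtain $a=b$ I would invoke that $\uCM(B)$ is $2$-Calabi--Yau and $\Hom$-finite over $\mathbb C$ (its cluster-tilted endomorphism algebras are finite-dimensional Jacobian algebras). The Calabi--Yau property gives a functorial isomorphism $\Ext^1(L_I,L_J)\cong D\,\Ext^1(L_J,L_I)$, where $D=\Hom_{\mathbb C}(-,\mathbb C)$. In particular the two extension spaces have the same dimension over $\mathbb C$. Since $\dim_{\mathbb C}\mathbb C[[t]]/(t^c)=c$ for every $c$, comparing dimensions forces $a=b$, which completes the argument.

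The step I expect to be the main obstacle is the clean invocation of the $2$-Calabi--Yau symmetry at the level of these specific cyclic torsion modules: one must be careful that the relevant duality is the one compatible with the $\mathbb C$-linear (equivalently, the finite-length $Z$-module) structure on $\Ext^1$, so that it is genuinely dimension-preserving and yields $a=b$ rather than merely an abstract duality. Granting this symmetry, everything else reduces to the two applications of the Proposition, which are immediate from its statement. An alternative, more computational route would avoid Calabi--Yau entirely by observing directly that each lateral side $m_i$ of the trapezia in the matrix $D^*$ is bounded not only by a slope of $I$ but also by the corresponding slope of $J$, since each trapezium is pinched between a slope of $I$ and a slope of $J$; carrying out this bookkeeping symmetrically would give $a\le\min(m_I,m_J)$ directly.
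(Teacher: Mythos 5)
Your argument is correct and follows essentially the same route as the paper, whose entire proof is to apply the preceding proposition in both directions and invoke the symmetry $\Ext^1(L_I,L_J)\cong\Ext^1(L_J,L_I)$, cited from \cite[Theorem 3.7]{BB} as an isomorphism of $Z$-modules. The only difference is that you derive this symmetry from the $2$-Calabi--Yau duality on $\uCM(B)$ (which gives equality of dimensions, sufficient here since both sides are already known to be cyclic), whereas the paper cites the direct computational result.
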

\begin{proof} It follows from  $\Ext^1(L_I, L_J)\cong \Ext^1(L_J, L_I)$ (see \cite[Theorem 3.7]{BB}).
\end{proof}

\begin{cor} If $I$ and $J$ are crossing rims with $I$ having two peaks and one of the slopes of length $1$, then 
$$\Ext^1(L_I, L_J)\cong  \Ext^1(L_J, L_I)\cong \mathbb C.$$ 
\end{cor}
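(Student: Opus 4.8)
The plan is to read this off from the preceding Proposition together with the crossing hypothesis, isolating the two inequalities $a \le 1$ and $a \ge 1$ that pin down the exponent. First I would apply the Proposition directly: since $I$ has two peaks and $I, J$ are crossing, we have $\Ext^1(L_I, L_J) \cong \mathbb{C}[[t]]/(t^a)$ with $a \le m$, where $m$ is the minimum of the lengths of all four slopes (downward and upward) of $I$. The hypothesis that one slope of $I$ has length $1$ immediately forces $m = 1$, because every slope has length at least $1$ and we are given one of length exactly $1$. Hence $a \le 1$.

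Next I would establish the reverse bound $a \ge 1$, i.e. that the extension space does not vanish. This is exactly where the crossing assumption enters: in the proof of the Proposition, crossing rims produce \emph{exactly four trapezia with nontrivial lateral sides}, so each exponent $m_i$ appearing in the matrix
\[
D^* = \left[\begin{array}{cc}-t^{m_1} & t^{m_2} \\ t^{m_3} & -t^{m_4}\end{array}\right]
\]
satisfies $m_i \ge 1$. Since $a$ is the minimal $m_i$, this gives $a \ge 1$. Combining with the previous paragraph yields $a = 1$, so $\Ext^1(L_I, L_J) \cong \mathbb{C}[[t]]/(t) \cong \mathbb{C}$.

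Finally I would transfer the computation to the opposite order using the symmetry $\Ext^1(L_I, L_J) \cong \Ext^1(L_J, L_I)$ from \cite[Theorem 3.7]{BB}, exactly as in the proof of the first Corollary; this gives $\Ext^1(L_J, L_I) \cong \mathbb{C}$ as well, completing the statement.

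The one point requiring care — and the main obstacle — is the matching between the slope of length $1$ of $I$ and the lateral side lengths $m_i$ recorded in $D^*$. The Proposition only gives the bound ``lateral side $\le$ corresponding slope of $I$,'' so I must confirm that the slope of length $1$ indeed produces a trapezium whose lateral side is one of the $m_i$, forcing that $m_i \le 1$; together with $m_i \ge 1$ from crossing this gives $m_i = 1$ and hence $a = 1$. This is essentially a bookkeeping check on which of the four trapezia sits on the length-$1$ slope, but it is the step where the geometry of the lattice diagram must be invoked rather than just the black-box statement of the Proposition.
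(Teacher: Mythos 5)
Your proof is correct and follows essentially the same route as the paper's: the upper bound $a\le 1$ comes from the Proposition together with the length-$1$ slope, the lower bound $a\ge 1$ comes from the crossing hypothesis (the paper states this directly, while you unpack it via the nontrivial lateral sides of the four trapezia), and the symmetry $\Ext^1(L_I,L_J)\cong\Ext^1(L_J,L_I)$ is imported from \cite[Theorem 3.7]{BB} exactly as in the preceding Corollary. The ``point requiring care'' you flag at the end is not actually needed: the Proposition's statement already asserts $a\le m$ with $m$ the minimum over \emph{all} slopes of $I$, so a slope of length $1$ forces $a\le 1$ without any further matching of slopes to trapezia.
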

\begin{proof} If $a$ is as in the previous proposition, then in this case  $a\le 1$, and since $I$ and $J$ are crossing, we must have $1\le a$. 
\end{proof}

We can use the previous corollary to construct part of the Auslander-Reiten quiver containing rank 1 modules whose rims have two peaks and a slope of length 1. For such a rim $I$, and  $J$ such that  $\Omega (L_I) = L_J$,  if we can find a non-trivial short exact sequence of the form $$0\rightarrow L_I \rightarrow M \rightarrow L_J\rightarrow 0,$$  
then this sequence must be an Auslander-Reiten sequence.

\begin{ex}
If $I=\{1,2,\dots,k-1\}\cup \{m\}$, where $n>m>k$, then  for any rim $J$ that is crossing with $I$ we have $\Ext^1(L_I, L_J)\cong \mathbb C$.
\end{ex}

In the following proposition, we deal with  a case where the upper bound from the previous proposition is achieved. 

\begin{prop} \label{prop:I-omegaI}
Let $I$ be a rim with two peaks, and let $m$ be as above, i.e.\ the minimum of the lengths of the slopes of $I$. Then 
$$\Ext^1(L_I,\Omega(L_{I}))\cong \mathbb C[[t]]/ (t^m).$$
\end{prop}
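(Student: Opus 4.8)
The plan is to pin down the module $L_J=\Omega(L_I)$ explicitly and then to show that the upper bound $a\le m$ from the previous proposition is attained for this particular $J$. Since $I$ has exactly two peaks $p_1,p_2$, the module $L_I$ is generated by the two lattice points sitting at these peaks, so its projective cover is $P_{p_1}\oplus P_{p_2}$, which has rank $2$. As rank is additive on short exact sequences, $\Omega(L_I)=\ker\bigl(P_{p_1}\oplus P_{p_2}\twoheadrightarrow L_I\bigr)$ has rank $2-1=1$, so $L_J=\Omega(L_I)$ really is a rank $1$ module, and its rim $J$ can be read off directly from the lattice diagram of this kernel.

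To see that the previous proposition applies, I first note that $I$ and $J$ must be crossing. The cleanest way to get this, and in fact the conceptual heart of the statement, is the triangulated structure. Since $\Omega=\tau^{-1}$ and $\uCM(B)$ is $2$-Calabi--Yau -- so that $\tau=[1]$ and hence $\Omega=[-1]$ -- the standard identification of $\Ext^1$ with stable $\Hom$ gives
\[
\Ext^1_B(L_I,\Omega L_I)\;\cong\;\underline{\Hom}\bigl(L_I,(\Omega L_I)[1]\bigr)\;=\;\underline{\Hom}(L_I,L_I)\;=\;\underline{\End}(L_I),
\]
using $(\Omega L_I)[1]=L_I[-1][1]=L_I$. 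As $L_I$ is indecomposable and non-projective (it has two peaks, hence is not a single interval), its stable endomorphism ring is nonzero, so $\Ext^1_B(L_I,\Omega L_I)\neq 0$. Since non-crossing rims have vanishing $\Ext^1$, the rims $I$ and $J$ must cross, and the previous proposition yields $\Ext^1_B(L_I,L_J)\cong\mathbb C[[t]]/(t^a)$ with $a\le m$.

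It remains to prove the saturation $a=m$, which I expect to be the main obstacle. I would run the matrix computation of the previous proposition for the explicit pair $(I,J)$: overlaying the rim $J$ of $\Omega(L_I)$ underneath $I$ produces four trapezia, and the claim is that the four lateral side lengths $m_1,m_2,m_3,m_4$ equal exactly the four slope lengths $d_1,l_1,d_2,l_2$ of $I$, so that $a=\min_i m_i=m$. Equivalently, via the isomorphism above it suffices to compute $\underline{\End}(L_I)\cong\mathbb C[[t]]/(t^m)$ directly: one has $\End_B(L_I)=\mathbb C[[t]]\cdot\mathrm{id}$, and the task is to identify the ideal of endomorphisms factoring through a projective-injective as $(t^m)$. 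Concretely, the canonical maps $L_I\to P_{p_i}\to L_I$ (embedding $L_I$ under the tent $P_{p_i}$ at the shared peak and projecting back) compose to multiplication by a power of $t$ governed by the slopes adjacent to $p_i$, and the minimum of these powers is $m$. The delicate step is making the combinatorial description of $\Omega(L_I)$ precise enough to verify that the syzygy positions $J$ so that every trapezium lateral side is the \emph{full} corresponding slope of $I$ (equivalently, that the factoring-through-projective ideal is exactly $(t^m)$ and not a larger power); this is precisely where the upper bound of the previous proposition becomes an equality.
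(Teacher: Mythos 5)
Your setup is correct and in places more conceptual than the paper's own argument: reading off that $\Omega(L_I)$ has rank $1$ from the two-peak projective cover is exactly right, and the identification $\Ext^1(L_I,\Omega L_I)\cong\underline{\Hom}(L_I,(\Omega L_I)[1])\cong\underline{\End}(L_I)$, using $\Omega=\tau^{-1}=[-1]$ in the $2$-Calabi--Yau stable category, is a clean way to see that the extension group is nonzero and hence that $I$ and $\Omega(I)$ cross. The paper does not use this reformulation. The problem is that your argument stops precisely where the proposition has its content: everything up to that point only re-establishes the upper bound $a\le m$ from the preceding proposition, and the equality $a=m$ --- which you yourself flag as ``the main obstacle'' and ``the delicate step'' --- is deferred in both of the routes you sketch. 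What remains is a reduction of the statement to itself together with a plan.

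Moreover, the intermediate claim you propose to verify in route (a) is not what the computation actually yields. The paper overlays the rim of $\Omega(L_I)$ beneath that of $I$ and finds that $D^*$ is the $2\times 2$ matrix $\left(\begin{smallmatrix}-t^{m_1}&t^{m_1}\\ t^{m_2}&-t^{m_2}\end{smallmatrix}\right)$ with only \emph{two} lateral lengths, namely $m_1=\min\{d_1,l_2\}$ and $m_2=\min\{d_2,l_1\}$: each lateral side is the minimum of two adjacent slopes of $I$, not the full slope, contrary to your assertion that the four lateral sides equal $d_1,l_1,d_2,l_2$. The overall minimum happens to agree, $\min\{m_1,m_2\}=\min\{d_1,l_1,d_2,l_2\}=m$, so the error is benign for the final answer, but the claim as stated would not survive the verification you postpone. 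What actually drives the paper's proof is the special shape of $D^*$ for this particular pair --- equal exponents within each row --- which pins the cyclic module down to length exactly $\min\{m_1,m_2\}$ rather than merely bounding it; your outline never establishes this. The alternative route through $\underline{\End}(L_I)\cong\mathbb{C}[[t]]/(t^m)$ is attractive and likely workable, but identifying the ideal of endomorphisms factoring through projectives as exactly $(t^m)$ is again the very computation the proposition asserts, so it cannot be left as a remark.
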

\begin{proof}
Since the rim $I$ has two peaks, its first syzygy is also a rank 1 module.  As in the proof of the previous proposition, from the proof of \cite[Theorem 3.1]{BB} we know that the matrix of the map $D^*$ from that proof is a $2\times2 $ matrix of the form 
$$
\left[\begin{array}{cc}-t^{m_1} & t^{m_1} \\t^{m_2} & -t^{m_2}\end{array}\right],
$$
where the numbers $m_1$ and $m_2$ denote the lengths  of the lateral sides of the trapezia used to compute the extension space 
(cf.\ \cite{BB}), as in the following picture, and that $m=\min\{m_1,m_2\}.$
\[
\begin{tikzpicture}[scale=0.6,baseline=(bb.base),
quivarrow/.style={black, -latex, thick}]
\newcommand{\seventh}{51.4} 
\newcommand{\circradius}{1.5cm}
\newcommand{\inradius}{1.2cm}
\newcommand{\outradius}{1.8cm}
\newcommand{\dotrad}{0.1cm} 
\newcommand{\bdrydotrad}{{0.8*\dotrad}} 
\path (0,0) node (bb) {}; 


\draw (-1,6) circle(\bdrydotrad) [fill=black];
\draw (-1,4) circle(\bdrydotrad) [fill=black];

\draw (0,5) circle(\bdrydotrad) [fill=black];
\draw (1,4) circle(\bdrydotrad) [fill=black];
\draw (2,3) circle(\bdrydotrad) [fill=black];
\draw (3,4) circle(\bdrydotrad) [fill=black];
\draw (4,3) circle(\bdrydotrad) [fill=black];
\draw (5,2) circle(\bdrydotrad) [fill=black];

\draw (0,3) circle(\bdrydotrad) [fill=black];
\draw (1,2) circle(\bdrydotrad) [fill=black];
\draw (2,3) circle(\bdrydotrad) [fill=black];
\draw (3,2) circle(\bdrydotrad) [fill=black];
\draw (4,1) circle(\bdrydotrad) [fill=black];
\draw (6,1) circle(\bdrydotrad) [fill=black];
\draw (7,2) circle(\bdrydotrad) [fill=black];
\draw (5,0) circle(\bdrydotrad) [fill=black];
\draw (7,0) circle(\bdrydotrad) [fill=black];

\draw[dashed] (-1,4)--(0,3)--(1,2)--node[right]{$l_2\,\,\,\,\,\,\,\,\,\,\,\,\,\,\, $}(2,3)--(3,2);
\draw[dotted] (3,2)--(4,1);
\draw[dashed](4,1)--(5,0)--node[right]{$l_1$}(6,1)--(7,0); 
\draw[-, thick, black] (5,0)--node[left]{\small$m_2\!\!$}(5,2);
\draw[-, thick, black] (1,4)--node[left]{\small${{m_1\!\!}}$}(1,2);

\draw[dotted] (3,2)--(4,1);





\draw [quivarrow,shorten <=5pt, shorten >=5pt] (-1,6) -- node[above]{$1$} (0,5);
\draw [dotted,shorten <=5pt, shorten >=5pt] (0,5)
-- node[above]{} (1,4);
\draw [quivarrow,shorten <=5pt, shorten >=5pt] (1,4) -- node[above]{$d_1$} (2,3);
\draw [quivarrow,shorten <=5pt, shorten >=5pt] (3,4) -- node[above]{} (2,3);
\draw [quivarrow,shorten <=5pt, shorten >=5pt] (3,4) -- node[above]{$\,\,\,\,\,\,\,\quad\,\,\,\,\,\,\,\,\,\,d_1+l_1+1$} (4,3);
\draw [shorten <=5pt, shorten >=5pt,dotted] (4,3) -- node[above]{} (5,2);
\draw [quivarrow,shorten <=5pt, shorten >=5pt] (7,2) -- node[above]{$n$} (6,1);
\draw [quivarrow,shorten <=5pt, shorten >=5pt] (5,2) -- node[above]{$$} (6,1);



\draw [dotted] (-1,-1) -- (-1,6);
\draw [dotted] (7,-1) -- (7,6);


\end{tikzpicture}
\]

We see from the picture that  $m_1=\min\{d_1,l_2\}$ and $m_2=\min\{d_2,l_1\}$. The proposition now follows.

\end{proof}

\begin{cor}\label{cor:ext1}
Let $I$ be a rim with two peaks and one of the slopes (either downward or upward) of length $1$. Then 
$$\Ext^1(L_I,\Omega(L_{I}))\cong \Ext^1(\Omega(L_{I}), L_I)\cong \mathbb C.$$
\end{cor}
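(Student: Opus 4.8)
The plan is to derive Corollary~\ref{cor:ext1} directly from Proposition~\ref{prop:I-omegaI} together with the crossing hypothesis that is implicit in the setup. First I would invoke Proposition~\ref{prop:I-omegaI}, which gives
\[
\Ext^1(L_I,\Omega(L_I))\cong \mathbb C[[t]]/(t^m),
\]
where $m=\min\{m_1,m_2\}$ and, as computed in the proof of that proposition, $m_1=\min\{d_1,l_2\}$ and $m_2=\min\{d_2,l_1\}$. The hypothesis now adds that one of the four slopes of $I$ (some $d_i$ or $l_i$) has length $1$.

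The key step is to check that this length-$1$ assumption forces $m=1$. Since each of $m_1,m_2$ is a minimum of two of the slope lengths, and the four slope lengths $d_1,d_2,l_1,l_2$ are exactly the numbers appearing across $m_1$ and $m_2$, a slope of length $1$ occurs as one of the arguments of either $m_1$ or $m_2$; whichever it is, that $m_i$ equals $1$, so $m=\min\{m_1,m_2\}=1$. Substituting $m=1$ into Proposition~\ref{prop:I-omegaI} yields $\Ext^1(L_I,\Omega(L_I))\cong \mathbb C[[t]]/(t)\cong \mathbb C$.

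For the second isomorphism I would apply the symmetry of the extension pairing cited earlier, namely $\Ext^1(L_I,L_J)\cong \Ext^1(L_J,L_I)$ from \cite[Theorem 3.7]{BB}, with $L_J=\Omega(L_I)$; this immediately gives $\Ext^1(\Omega(L_I),L_I)\cong\Ext^1(L_I,\Omega(L_I))\cong\mathbb C$, completing the chain of isomorphisms in the statement. This is essentially the same reduction already used to pass from Proposition to Corollary in the earlier block of results, so it should go through verbatim.

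The only point requiring a little care, and the step I expect to be the main (if minor) obstacle, is confirming that the length-$1$ slope really does land inside the $\min$ defining $m$ rather than being irrelevant to it. Because $m=\min\{m_1,m_2\}=\min\{d_1,l_2,d_2,l_1\}$ is the minimum over \emph{all four} slope lengths, any slope of length $1$ automatically realizes this minimum, so $m=1$ with no case analysis needed; one should simply note explicitly that $m$ equals the minimum of all four slope lengths, which is transparent from the formulas $m_1=\min\{d_1,l_2\}$, $m_2=\min\{d_2,l_1\}$. Everything else is a formal consequence of the two cited results.
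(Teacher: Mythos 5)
Your proposal is correct and matches the paper's (implicit) argument: the paper states Corollary~\ref{cor:ext1} without proof precisely because, as you observe, $m$ is by definition the minimum of all four slope lengths, so a length-$1$ slope forces $m=1$ in Proposition~\ref{prop:I-omegaI}, and the symmetry $\Ext^1(L_I,L_J)\cong\Ext^1(L_J,L_I)$ from \cite[Theorem 3.7]{BB} gives the second isomorphism. Nothing further is needed.
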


%
\subsection{Auslander-Reiten sequences.}

The purpose of this subsection is to determine certain Auslander-Reiten sequences of the form 
$L_I \to M \to L_J$,  where $L_I$ and $L_J$ are rank 1 $\B$-modules. To do this we move back and forth from $\CM (\B)$ to $\CM (\Pi_{k,n})$ using the quotient functor $\pi$.

\begin{remark}\label{Remark-rigidM} Let $L_I$ and $L_J$ be two rank 1 modules such that $\dim \Ext^1 (L_J, L_I) = 1$. Using the quotient functor, the modules $\pi(L_I)$ and $\pi(L_J)$ are rigid modules over $\CM (\Pi_{k,n})$ (or one may consider them as modules in the subcategory $\Sub Q_k $ of the preprojective algebra). Then, by \cite[Proposition 5.7]{rigid} the middle term $\pi(M)$ of the 
non-trivial extension is rigid. 
Thus, the middle term $M$ is rigid.
\end{remark}

\begin{defi}
Let $I$ be a $k$-subset of $\mathbb{Z}_n$ consisting of two intervals where one of the intervals 
is a single element. We call such a $k$-subset {\em almost consecutive}. 
\end{defi} 

From \cite[Section 2]{BB}, we know that if $I$ is almost consecutive, say $I = \{i, j, \ldots, j+k-2\}$ 
for some $j\in [i+2,\dots, n-k+i+1]$, then 
$\Omega (L_I)=L_J$,  where $J= \{i+1, \ldots, i+k-1, j+k-1\}$ is also almost consecutive. 

%
%
%

\begin{theorem}\label{Teo rk1-new} 
Let $I = \{i, j, \ldots, j+k-2\}$ be almost consecutive 
and $J$ be such that $L_J=\Omega (L_I)$. 
Then the Auslander--Reiten sequence with $L_I$ and $L_J$ as end-term is as follows: \\
\[ L_I \hookrightarrow \frac{L_X}{L_Y}  \twoheadrightarrow L_J,\] 
with $X= \{i+1, j,j+1, \ldots, j+k-3, j+k-1\}$ and $Y= (I\cup J)\setminus X$ 
and where $\frac{L_X}{L_Y}$ is indecomposable if and only if 
$j\ne i+2$. 
In case $j=i+2$, $\frac{L_X}{L_Y}=P_i\oplus L_U$ for $U=\{i,i+2,i+3,\dots,k+i-1,k+i+1\}$.

Furthermore, in both cases, the middle term is rigid. 
\end{theorem}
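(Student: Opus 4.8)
The plan is to determine the AR-sequence by a uniqueness argument and then identify its middle term combinatorially as the uncrossing of the two rims. First I would observe that $I=\{i\}\cup\{j,\dots,j+k-2\}$ is a union of two intervals, so its rim has exactly two peaks, one of whose downward slopes (the singleton $\{i\}$) has length $1$. Corollary~\ref{cor:ext1} then gives $\Ext^1(L_I,\Omega(L_I))\cong\Ext^1(\Omega(L_I),L_I)\cong\mathbb C$, and since $L_J=\Omega(L_I)=\tau^{-1}L_I$ the space $\Ext^1(L_J,L_I)$ is one-dimensional. Hence, up to isomorphism, there is a unique non-split extension of $L_J$ by $L_I$; the almost split sequence ending in $L_J$ has left term $\tau L_J=L_I$ and represents a non-zero class of this same one-dimensional space, so it coincides with that unique non-split extension. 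Write $M$ for its middle term. This already settles the final clause: as $\dim\Ext^1(L_J,L_I)=1$, Remark~\ref{Remark-rigidM} forces $M$ to be rigid in every case.

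Next I would exhibit an explicit rank-$2$ module realising this extension. Let $h_I,h_J$ denote the height functions of the two rims (a down-step at $\ell$ precisely when $\ell\in I$, an up-step otherwise, normalised at the common left endpoint), and let $X$ and $Y$ be the rims of the pointwise maximum $h_I\vee h_J$ and minimum $h_I\wedge h_J$. Since $\max+\min$ is the sum, $h_X+h_Y=h_I+h_J$, so at every edge the number of down-steps among $X,Y$ agrees with that among $I,J$; thus $X\uplus Y=I\uplus J$ as multisets, i.e. $Y=(I\cup J)\setminus X$. Because $Y\le I,J\le X$ pointwise, both $I$ and $J$ sit inside $L_X/L_Y$ as rank-$1$ layers, giving a short exact sequence $0\to L_I\to L_X/L_Y\to L_J\to 0$ with the complementary rim as cokernel. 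A direct reading of $h_I\vee h_J$ off the step patterns of $I$ and $J=\{i+1,\dots,i+k-1,j+k-1\}$ then yields exactly $X=\{i+1,j,\dots,j+k-3,j+k-1\}$.

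To decide when $L_X/L_Y$ is indecomposable I would use the subspace-configuration description: by Example~\ref{ex:poset} the module $L_X/L_Y$ has quotient poset $(1^r,2)$, and by Remark~\ref{rem:poset-indec} it is indecomposable iff $r\ge3$. Here $r$ is the number of \emph{lenses} — the maximal vertex-intervals on which $h_X>h_Y$ strictly — because each lens supplies one distinct line to the configuration at the two-dimensional vertex. Computing the support of $h_X-h_Y=|h_I-h_J|$ shows, for $k\ge3$, three lenses exactly when $j\ne i+2$ (one near $i$, one over $\{j,\dots,j+k-3\}$ where the long slopes slide past one another, one near $j+k-1$), so $r=3$ and $L_X/L_Y$ is indecomposable. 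When $j=i+2$ the long slopes coincide, one checks $h_I\le h_J$ everywhere, so $X=J$, $Y=I$, the central lens collapses and $r=2$; a poset $(1^2,2)$ decomposes into two rank-$1$ modules, and matching rims — equivalently the multiset identity $\{i+1,\dots,i+k\}\uplus U=I\uplus J$ — identifies them as $P_i=L_{\{i+1,\dots,i+k\}}$ and $L_U$, giving $L_X/L_Y\cong P_i\oplus L_U$.

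Finally, in every case $L_X/L_Y\not\cong L_I\oplus L_J$ (it is either indecomposable of rank $2$ or $\cong P_i\oplus L_U$, whose summands have rims different from $I,J$), so the sequence of the second paragraph is non-split; by the uniqueness of the first paragraph it is therefore the AR-sequence and $M\cong L_X/L_Y$, proving all assertions. I expect the lens count to be the main obstacle: showing that exactly three lenses occur for every admissible $j$ requires careful bookkeeping of the overlap between the intervals $\{j,\dots,j+k-2\}$ and $\{i+1,\dots,i+k-1\}$ across the range of $j$, and verifying that the poset invariant $r$ is indeed computed by the lens count.
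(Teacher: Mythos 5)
Your opening paragraph is sound and coincides with the paper's own set-up: Corollary~\ref{cor:ext1} gives $\dim\Ext^1(L_J,L_I)=1$, Remark~\ref{Remark-rigidM} gives rigidity of the middle term, and the paper notes just before the theorem that any non-split sequence $0\to L_I\to M\to L_J\to 0$ must then be the AR-sequence; your envelope formula for $X$ and your lens-count criterion (consistent with Remark~\ref{rem:poset-indec}) are also the right combinatorics. The genuine gap is that, after this reduction, the entire weight of the proof sits on your second paragraph, and that paragraph asserts rather than proves: you never define ``$L_X/L_Y$'' as a $B$-module, never construct a homomorphism $L_I\to L_X/L_Y$, and never verify that its cokernel is $L_J$. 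The pointwise inequalities $h_Y\le h_I,h_J\le h_X$ together with the multiset identity $X\uplus Y=I\uplus J$ do not by themselves produce an exact sequence of $B$-modules --- ``both $I$ and $J$ sit inside $L_X/L_Y$ as rank-$1$ layers'' is precisely the content of the theorem. Nor can the hole be filled by the paper's explicit construction: $L(X,Y)$ from Definition~\ref{def:construction} depends only on the multiset $X\uplus Y=I\uplus J$, and for $s=2$ it always carries the idempotent endomorphism whose value alternates between $\mathrm{diag}(1,0)$ and $\mathrm{diag}(0,1)$, flipping exactly at the (evenly many) edges $\ell$ with $r_\ell=1$; hence it splits into two rank-$1$ summands and, in particular, is \emph{not} the indecomposable module your lens-count paragraph claims to have produced when $j\ne i+2$. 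Identifying the unique non-split extension is exactly where the paper's proof does its work, arguing top-down about the unknown middle term: projective covers, the radical argument (an irreducible monomorphism $L_I\hookrightarrow P_a$ forces $L_I=\rad(P_a)$, hence $j=i+2$), dimension vectors under the quotient functor $\pi$, the no-loop property of cluster-tilting quivers (Remark~\ref{rem:cto-exists}), and finally the enumeration of the possible relative positions of the rims of $L_I$ and $L_J$ inside $M$ (Figures~\ref{fig:2configs} and~\ref{fig:3configs}). Your proposal contains no substitute for this analysis.

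The case $j=i+2$ has a second, independent problem: circularity. You identify the two rank-$1$ summands by ``the multiset identity'', but $I\uplus J$ also decomposes as $I\uplus J$ itself (and as the rims of $P_{i-1}$ and $P_{i+1}$), so the identity alone cannot distinguish $P_i\oplus L_U$ from $L_I\oplus L_J$; yet you exclude $L_I\oplus L_J$ --- that is, you prove non-splitness --- only in your final paragraph, \emph{using} that identification. The paper breaks this circle with an argument you would need to reproduce: a projective summand $P_a$ of an AR middle term receives an irreducible monomorphism from $L_I$, hence $L_I=\rad(P_a)$, which pins down $a=i$ and $j=i+2$, and only then is $U$ determined by additivity.
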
 
\begin{proof}
We will prove the claims for $i=1$, the statement then follows from the symmetry of $B$. By Corollary \ref{cor:ext1}, we have $\dim \Ext^1 (L_J,L_I) =1$, so the middle term $M$ is rigid by Remark \ref{Remark-rigidM}. Note that $M$ is a rank 2 module, so it is either a direct sum of two rank 1 modules or indecomposable module of rank 2. 
The projective cover of $M$ is a direct summand of the direct sum of the projective covers of 
$L_I$ and $L_J$, i.e.\  a summand  of $P_0\oplus P_{j-1} \oplus P_1\oplus P_{j+k-2}$. 

Suppose that $M=L_U\oplus L_V$. By the above, the peaks of $M$ belong to 
$\{0,1,j-1,j+k-2\}$. 

(i) If $M$ has a projective summand, say $L_V=P_a$ for some $a$, we have an irreducible 
monomorphism $L_I\hookrightarrow P_a$, hence $L_I=\rad(P_a)$. 
In that case, $I=\{a,a+2,\dots, a+k\}$, i.e.\  $a=1$, $j=3$ 
and $L_U$ is as claimed in part (2) of the theorem. 

(ii) If none of the summands of $M$ are projective, they have two peaks each, so 
$U=A\cup B$ and $V=C\cup D$ are two-interval subsets of $C_1$. 

We first claim that if the vertices $1$ and $j+k-2$ are the two peaks of $L_U$, 
then $L_U\cong L_J$. 
To 
see this, 
let $U=A\cup B=\{2,\dots\} \cup \{j+k-1,\dots \}$. 
We use that $\pi$ takes exact sequences to exact sequences. One checks that 
the dimension of $\pi(L_I\oplus L_J)$ is zero at 
vertices $j+k-1$ and  $j+k,\dots, n$ (see the following two figures). 
$$ 
\includegraphics[width=4cm]{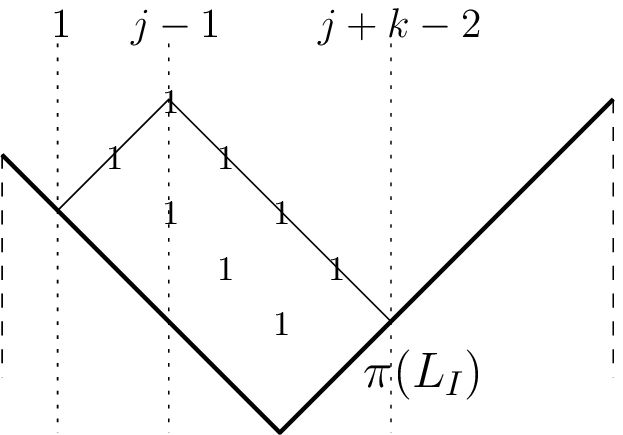}
\hskip 1cm
\includegraphics[width=4cm]{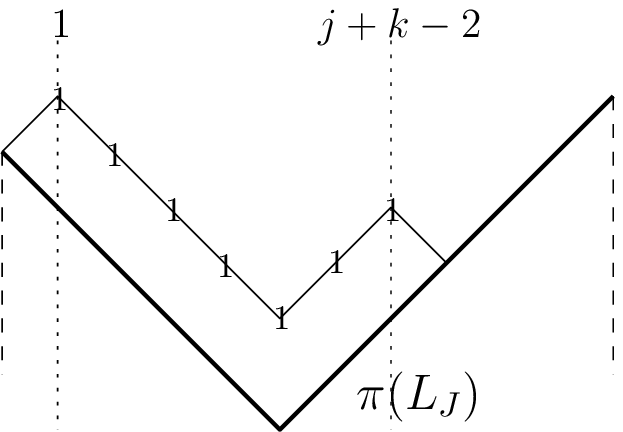}
$$
If $|B|>1$, then $\pi(L_U)$ has positive dimension at vertex $j+k-1$  (see the following figure). 
$$ 
\includegraphics[width=4cm]{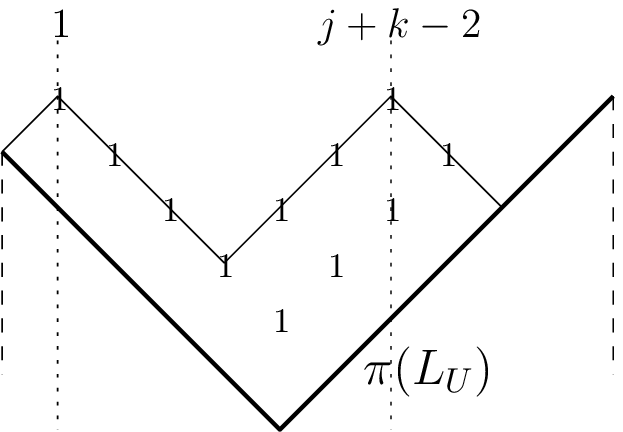} 
$$
Hence $|B|=1$, i.e.\ $U=J$. 

Now we claim that the exact sequence 
$L_I\to L_V\oplus L_J \to L_J$ is not an Auslander--Reiten sequence. 
If $L_J$ is a summand of a middle term of such Auslander--Reiten sequence, 
one can complete $L_J$ to a cluster tilting object $T$ as in Remark~\ref{rem:cto-exists}. 
Then the quiver of the endomorphism algebra $\End_{\uCM(B)}(T)$ 
has a loop at the vertex $L_J$ 
corresponding to the irreducible map $L_J\to L_J$, which is a contradiction.  

So $U$ and $V$ both contain one peak of $I$ and one of $J$. Say $U$ contains 
the peak at $1$ and $V$ the peak at $j+k-2$. Since $0$ cannot be a peak of $U$, we have 
$U=\{2,\dots\} \cup \{j,\dots\}$ and $V=C\cup D= \{1,\dots\}\cup \{j+k-1,\dots\}$. 
By the same argument as above, $|D|=1$. 
Applying $\pi$ to our short exact sequence yields that the dimension at vertex 2 of 
$\pi(L_U)$ is 1, whereas at vertex 2 of  $\pi(L_V)$ it is $0$. However, $\pi(L_I\oplus L_J)$ has 
dimension 2 at  vertex 2, which is  a contradiction. 

We assume now that $M$ is indecomposable. By the discussion above (Remark \ref{Remark-rigidM}) $M$ is rigid so it is determined by its profile, and the profile provides a filtration by rank 1 modules, $M=L_X|L_Y$.  
There are infinitely many injective maps of $L_I$ into $M$. They differ by the relative positions of 
$L_I$ and the quotient $L_J$ on the lattice diagram of $M$ 
covering the dimension vector of 
$M$. There are five distinct cases: the rim of $L_I$ can be strictly lower than the rim of $L_J$, 
they can touch, intersect properly, or the rim of $L_J$ can be below $L_I$, touching 
or being strictly lower, as in Figures~\ref{fig:2configs} and~\ref{fig:3configs}. 

\begin{figure}[h]
\[
\includegraphics[width=5cm]{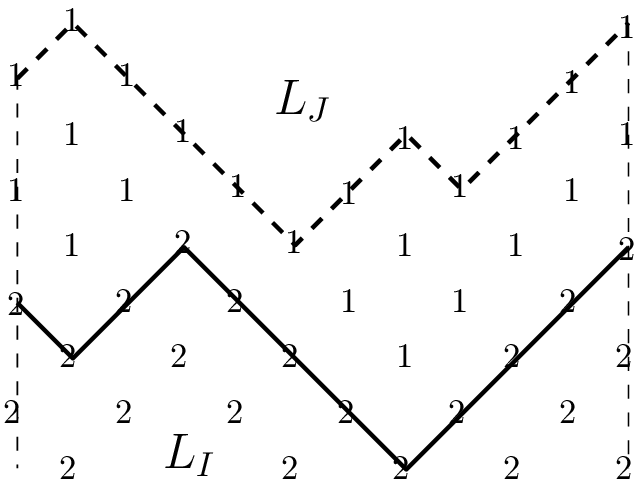}
\hspace{1cm}
\includegraphics[width=5cm]{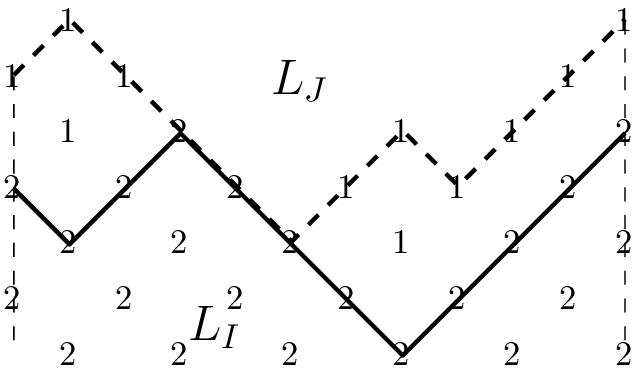}
\]
\caption{Rank 2 modules with submodule $L_I$ and quotient $L_J$}\label{fig:2configs}
\end{figure}
\begin{figure}[h]
\[
\includegraphics[width=4.8cm]{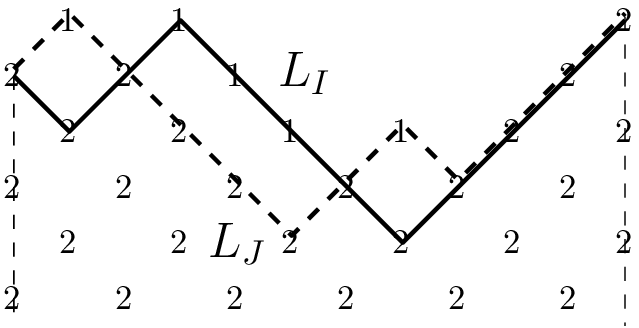}
\hspace{.5cm}
\includegraphics[width=4.8cm]{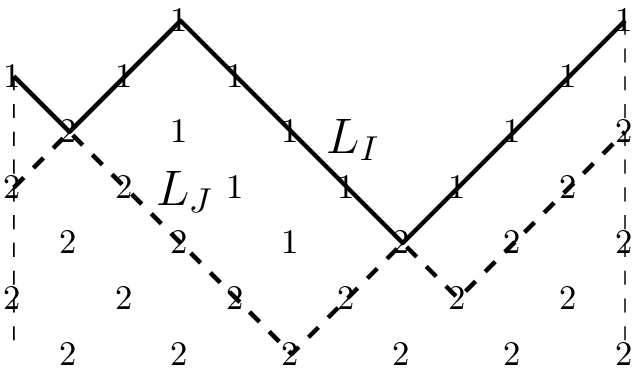}
\hspace{.5cm}
\includegraphics[width=4.8cm]{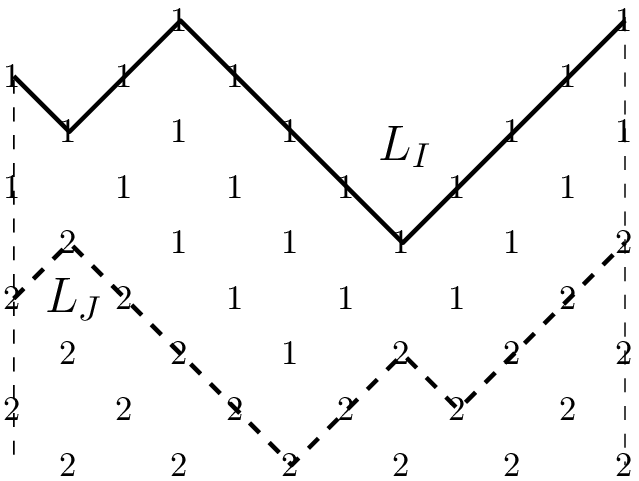}
\]
\caption{Rank 2 modules with submodule $L_I$ and quotient $L_J$, continued}\label{fig:3configs}
\end{figure}
%
Note that all these representations have up to three disjoint regions 
with 1-dimensional vector spaces at finitely many vertices 
mapping to the infinite region with 2-dimensional vector spaces at every vertex. The only case providing a profile of an indecomposable module is the left one in Figure \ref{fig:3configs}, yielding  $X=\{  2, j,\dots,j+k-3, j+k-1   \}$   and $Y=\{ 2,4,\dots,k, j+k-2    \}$
\end{proof} 

Note that in the previous theorem, $X=I\cup\{i+1,j+k-1\}\setminus\{i,j+k-2\}$, whereas $Y=J\cup\{i,i+k-2\}\setminus \{i+1,j+k-1\}$.

\begin{remark}\label{rem:rim-sequences}
In case $k=3$, Theorem~\ref{Teo rk1-new} covers all possible Auslander-Reiten sequences where the 
start and end terms are of rank 1. 
By \cite[Section 2]{BB}, the rank of $\tau^{-1}(L_J)$ is 
one less than the number of peaks of $J$, so $\rk\tau^{-1}(L_J)=1$ if and only if 
$J$ is a 2-interval subset. Since $k=3$, the 2-interval subsets are of the form $\{i,i+1,i+m\}$ with $2<m<n-1$. 
\end{remark}

Assume $k=3$. If we are given a rank 2 module $M$ with profile $X|Y$, then how do we know if it is the middle term of an Auslander--Reiten sequence with rank 1 modules $L_I$ and $\Omega(L_I)$? From the previous theorem, and the diagram on the left hand side in the previous picture, in order  for $M$ to be such a module, $X$ and $Y$ must be $3$-interlacing (see the first definition in Section \ref{sec:rk2-roots-comb}), and when drawn one above the other (as in the above picture), the diagram we obtain has to contain three consecutive diamonds, with no gaps between them and with the end two diamonds of lateral size 1. In order to recognize the rims of $L_I$ and $\Omega(L_I)$ from the profile $X|Y$, the easiest thing to do is to identify the middle diamond if it happens to be of size greater than 1 (as in the above picture). If the middle diamond is of size 1, then we can identify the right hand side diamond, since it is followed by an upward "tail like'' portion of the rim covered by both rim $X$ and rim $Y$  in the above picture. If there is no tail, then we only have three diamonds of size 1, and then we deal with the module $\{1,3,5\}|\{2,4,6\}$. 

Other rigid indecomposable rank 2 modules whose profile $X|Y$ does not satisfy these conditions do not appear as the middle term of an Auslander--Reiten sequence with rank 1 modules. 
In the tame cases, they either appear at the mouth of a tube, or they are meshes of the modules with at least one of them of rank greater than 1 as we will see later. 


\subsection{Periodicity} \label{sec:periodic}

It follows from the direct computation in \cite[Proposition 2.7]{BB}, that if $L_I$ is a rank $1$ module, then $\Omega^2 (L_I)= L_{I+k}$, where $(I_1 \mid \ldots | I_n) +m$ is the profile $(I_1+m) | (I_2+m) | \dots | (I_n+m)$ 
obtained by adding $m$ to each number in every $k$-subset appearing in the profile. Set $v=\lcm(n,k)/k$. This leads to the next observation.

\begin{remark}\label{prop:rank 1 regular}
Let $L_I$ be a non-projective rank $1$ module, and let $M$ be a module in the $\tau$-orbit of $L_I$. 
Then, $M$ is $\tau$-periodic of period $d$, for some factor $d$ of $2v$.
\end{remark}
It is not difficult to show that the same formula $\Omega^2(L_I|L_J)=L_I+k|L_J+k$  holds for the rigid rank 2 modules. 

By graded Morita theory, $\CM(B)$ is equivalent to $\CM^{\mathbb{Z}_n}(R_{k,n})$ where 
$R_{k,n}=\mathbb{C}[x,y]/(x^k - y^{n-k})$ and where the $\mathbb{Z}_n$-grading is given by 
$\deg x=1$ and $\deg y=-1$, \cite[Theorem 3.16]{DL16}. For the latter, 
Demonet-Luo show, \cite[Theorem 3.22]{DL16}, that there is an isomorphism of autoequivalences $[2] \simeq (-k)$, 
where the notation $(-1)$ refers to a shift in the $\mathbb{Z}_n$-grading. From that, we obtain:


\begin{prop}\label{prop:keller}
Every module in $\uCM(\B)$ is $\tau$-periodic with period a factor of $2v$. 
\end{prop}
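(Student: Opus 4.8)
The plan is to deduce $\tau$-periodicity of every module in $\uCM(\B)$ from the graded-shift structure established via the Demonet--Luo equivalence. First I would record the two facts quoted immediately before the statement: the equivalence $\CM(\B)\simeq \CM^{\mathbb{Z}_n}(R_{k,n})$ of \cite[Theorem 3.16]{DL16}, under which the syzygy functor $\Omega$ corresponds to $\tau$ (as established earlier using $\Omega=\tau^{-1}$ in $\uCM(B)$ and the $2$-Calabi--Yau property), and the isomorphism of autoequivalences $[2]\simeq(-k)$ from \cite[Theorem 3.22]{DL16}, where $(-1)$ denotes a shift of the $\mathbb{Z}_n$-grading. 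The key structural observation is that the grading shift $(-1)$ is an autoequivalence of order dividing $n$: shifting the $\mathbb{Z}_n$-grading $n$ times returns the identity, so $(-n)\simeq \mathrm{id}$.

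The central computation is then to combine these to see that a suitable power of $[1]$ (equivalently of $\tau$) is trivial. Since $[2]\simeq(-k)$, iterating gives $[2m]\simeq(-mk)$ for every positive integer $m$. Recalling $v=\lcm(n,k)/k$, note that $vk=\lcm(n,k)$ is a multiple of $n$, so $(-vk)\simeq(-\lcm(n,k))\simeq\mathrm{id}$ because the grading shift has order dividing $n$. Therefore $[2v]\simeq(-vk)\simeq\mathrm{id}$ as autoequivalences of $\uCM^{\mathbb{Z}_n}(R_{k,n})$, hence also of $\uCM(\B)$. Transporting through the identification $\tau=\Omega=[-1]$ (up to the conventions fixed in Section~\ref{sec:results}), this says $\tau^{2v}\simeq\mathrm{id}$, so every indecomposable module $M$ satisfies $\tau^{2v}M\cong M$. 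Consequently each $M$ is $\tau$-periodic with period dividing $2v$, which is exactly a factor of $2v$.

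I would then note that this statement strictly strengthens Remark~\ref{prop:rank 1 regular}, which only asserted periodicity for modules in the $\tau$-orbit of a rank $1$ module: the argument here makes no reference to the rank, since it is a statement about the autoequivalence $[2]$ on the whole stable category. One should also confirm that projective-injective objects cause no trouble, since they are the zero objects of $\uCM(\B)$ and the claim concerns the stable category; so the periodicity assertion is about the nonzero indecomposables, all of which are non-projective.

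The main obstacle I anticipate is bookkeeping the precise relationship between the syzygy $\Omega$, the suspension $[1]$, and the grading shift $(-1)$ across the two categories, making sure the directions and signs of the shifts are compatible so that $[2]\simeq(-k)$ genuinely yields $\tau^{2v}\simeq\mathrm{id}$ rather than some sign- or direction-reversed variant. Concretely, one must check that $\Omega^2$ corresponds to $(+k)$ (matching the earlier formula $\Omega^2(L_I)=L_{I+k}$ of \cite[Proposition 2.7]{BB}, which also gives a reassuring independent verification of the $2v$ bound) and reconcile this with the $[2]\simeq(-k)$ convention of \cite{DL16}; once the conventions are pinned down, the order-dividing-$n$ property of the grading shift makes the rest a one-line divisibility argument.
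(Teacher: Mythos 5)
Your argument is correct and is precisely the one the paper intends: the proposition is stated as an immediate consequence of the Demonet--Luo facts $[2]\simeq(-k)$ and the $\mathbb{Z}_n$-grading (so $(-n)\simeq\mathrm{id}$), giving $[2v]\simeq(-vk)\simeq\mathrm{id}$ and hence $\tau^{2v}\simeq\mathrm{id}$ via $\tau=[1]$. Your additional checks on conventions and on the consistency with $\Omega^2(L_I)=L_{I+k}$ are sensible but do not change the route.
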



\subsection{A different approach on periodicity: Products of Dynkin types}\label{general case}

We now switch perspectives and use \cite{BKM16} to follow an approach of Keller, \cite{Keller2013}. 
This will allow us to identify the tame cases later. 

The category $\uCM(\B)$ has cluster-tilting objects whose 
endomorphism algebras have rectangular quivers built by 
$(k-1)\times (n-k-1)$ lines of arrows, forming alternatingly oriented squares.
These are exactly the quivers of the rectangular 
arrangements from~\cite[Section 4]{Scott06}. We will denote them by $Q \square Q'$, where $Q$ 
is a Dynkin quiver of type $A_{k-1}$ and $Q'$ of type $A_{n-k-1}$, with corresponding Coxeter numbers $h=k$ and $h'=n-k$, as in~\cite{Keller2013}. 
These are quivers with a natural potential $P$, in the sense of 
\cite{DWZ08}, given by the 
sum of all clockwise cycles minus the sum of all anti-clockwise cycles. We will write 
QP to abbreviate `quiver with potential'.

\begin{ex}\label{ex:tame}
For $(3,9)$ and for $(4,8)$ the rectangles $Q \square Q'$ are 
as follows: 
\[
\xymatrix@R=0.4em@C=0.6em{ 
&& && && &&  && && 
\bullet\ar[rr] && \bullet\ar[dd] && \bullet\ar[ll] \\ 
\bullet \ar[rr] && \bullet\ar[dd] && \bullet\ar[rr]\ar[ll] && \bullet\ar[dd] && \bullet\ar[ll]   \\
 && && && && 
 && && 
\bullet\ar[uu]\ar[dd] && \bullet\ar[rr]\ar[ll] && \bullet\ar[uu]\ar[dd]  \\ 
\bullet\ar[uu] && \bullet\ar[ll]\ar[rr] && \bullet\ar[uu] && \bullet \ar[ll]\ar[rr] && \bullet \ar[uu]  \\
 &&&&&& &&&&&& \bullet\ar[rr] && \bullet\ar[uu] && \bullet\ar[ll] 
}
\]
\end{ex}

Such Jacobian algebras can also be obtained as 2-Calabi-Yau tilted algebras from certain (Hom-finite) generalized cluster categories $\Cc_A$ 
in the sense of \cite{Amiot2009}, since the corresponding QP are (QP-)mutation equivalent to $(Q \boxtimes Q',\widetilde{P})$, following \cite{Keller2013}.

In the cases $(3,9)$ and $(3,8)$, these QP are 
\[
\xymatrix@R=0.4em@C=0.6em{ 
&& && && &&  && && 
\bullet\ar[rr] && \bullet\ar[lldd]\ar[rr] && \bullet\ar[lldd]  \\ 
\bullet \ar[rr] && \bullet\ar[lldd]\ar[rr] && \bullet\ar[rr]\ar[lldd]  && \bullet\ar[rr]\ar[lldd]  
 && \bullet\ar[lldd]   \\
 && && && && 
 && && 
\bullet\ar[uu]\ar[rr] &&  \bullet\ar[lldd]\ar[rr]\ar[uu] && \bullet\ar[lldd]\ar[uu]  \\ 
\bullet\ar[uu]\ar[rr] && \bullet\ar[uu]\ar[rr] && \bullet\ar[uu]\ar[rr] && \bullet \ar[uu]\ar[rr] 
 && \bullet \ar[uu]  \\
 &&&&&& &&&&&& \bullet\ar[rr]\ar[uu] && \bullet\ar[uu]\ar[rr] && \bullet \ar[uu]
}
\]
with potential the sum of all positive 3-cycles minus the sum of all negative 3-cycles (note that this agrees with the natural potential of the dimer model from~\cite[Section 3]{BKM16}). By \cite[Section 2.1]{KellerReiten07}, there exists an equivalence of categories 
\begin{equation*}
\mathcal{C}_A/(\Sigma T) \leftrightarrow \mmod Jac \hspace{1pt} (Q,P) \leftrightarrow \uCM (\B) /((\Omega)^{-1} T'),
\end{equation*}
where $(\Sigma T)$ is the ideal of morphisms that factor through 
$\mathrm{add} \hspace{1pt} \Sigma T$ for a cluster tilting object $T$ (respect. $T'$ and $(\Omega)^{-1} T'$) over the 2-CY category.
These categories are Krull-Schmidt, so by \cite[Section 3.5]{KellerReiten07}, 
the Auslander--Reiten quiver of $\mmod Jac \hspace{1pt} (Q,P)$ is obtained from the 
Auslander--Reiten quiver of the 2-CY categories removing a finite number of vertices.

\begin{remark} If we denote $\Sigma$ by the shift functor, $S$ the Serre functor, and $\tau_\Cc$ the Auslander--Reiten translation over $\Cc_A$, there is an isomorphism of functors $\Sigma^2= S$. On the other hand $S$ is $\Sigma \tau$ over $\Cc_A$, so $\tau_{\Cc} = \Sigma$. Keller's proof of the periodicity of the Zamolodchikov transformation $(\tau \otimes 1)$ \cite[Theorem 8.3]{Keller2013} indicates a way to prove $\tau$-periodicity. In fact, one can show that $\tau_\Cc$ is $2n$-periodic. 

Keller shows that $(\tau \otimes 1)^h = \Sigma^{-2}$ is an isomorphism of functors of $\Cc_A$ and with this that $(\tau \otimes 1)^{h+h'} = \mathds{1}$ is isomorphism of functors of $\Cc_A$. 
We can do the following:
\[\mathds{1}= \mathds{1}^h=(\tau \otimes 1)^{(h+h')h} = (\tau \otimes 1)^{h^2} (\tau \otimes 1)^{h h'} = (\Sigma^{-2})^h (\Sigma^{-2})^{h'} = \Sigma^{-2h - 2h'} = \tau_\Cc^{-2(h + h')}\]

In our case $2(h+h')=2n$. 
\end{remark}

%

\subsection{Tame cases}\label{sec:tame-rep-theory}

For $k=2$, such a quiver $Q\square Q'$ is of type $A_{n-3}$, 
for $(3,6)$ of (mutation) type $D_4$, for $(3,7)$ of type $E_6$ and for $(3,8)$ of type $E_8$. 
The Dynkin types above 
are the only cases of finite representation type, whereas the two cases $(3,9)$ and 
$(4,8)$ in 
Example~\ref{ex:tame} are the first cases of infinite representation type. The quivers with 
potential give rise to path algebras with relations whose mutation class representation theory is studied in \cite{GG15}. They correspond to \emph{elliptic types} $E_{7,8}^{(1,1)}$, known to be tame \cite[Theorem 9.1]{GLab}. Besides the cases $(3,9), (4,8)$ and the finite types, all other QP algebras obtained from rectangular arrangements are of \emph{wild} type.

In the cases $(3,9)$ and $(4,8)$ the QP's also arise from two cases of 2-Calabi Yau categories 
$\mathcal{C}(A)$, called \emph{tubular cluster categories} of types $(6,3,2)$ and $(4,4,2)$ respectively. It is known that 
$\mathcal{C}(A)$ is formed by a coproduct of tubular components $\coprod_{x\in \mathbb{X}} \mathcal{T}_x$ 
where almost all tubes $\mathcal{T}_x$ are of rank 1, and there are finitely many tubes of ranks $6,3,2$ (resp. $4,4,2$) \cite{Barot}.
Therefore, the corresponding Auslander--Reiten quivers are formed by finitely many tubes of ranks $6,3,2$ (resp. $4,4,2$), and infinitely many homogeneous tubes.

%
\section{Higher rank modules}\label{sec:higher-rank}

In this section, we give a construction for higher rank modules in $\CM(\B)$ in the spirit of the 
definition of rank 1 modules (Definition~\ref{d:moduleMI}). 
Recall that the rank 1 modules in $\CM(\B)$ are in bijection with $k$-subsets of $\mathbb{Z}_n$. 
Modules of higher rank can also be described combinatorially, 
as we will see here. 
%
%
%
%
For this, let $\mathrm{Id}_s$ be the $s\times s$-identity matrix, let $E_{i,j}$ be the 
elementary $s\times s$-matrix with entry 1 at position $(i,j)$ and 0 everywhere else. 
We set $\sigma$ to be the following $s\times s$-matrix: 
\[
\sigma= \sum_{i=1}^{s-1}E_{i,i+1} + tE_{s,1}=
\begin{pmatrix} 
	0 & 1 & 0 && & \\
	0 & 0 & 1 && \\
	\vdots & & \ddots &  &\\ 
	0 & & &  0 & 1 \\
	t & 0 && & 0
\end{pmatrix} 
\]

\begin{lemma}\label{lm:tau-powers}
For $j=0,\dots, s$ we have 
\[
\sigma^j=\sum_{i=1}^{s-j}E_{i,i+j} + t\sum_{i=1}^j E_{s+i-j,i}
.\]
In particular, $\sigma^s = t \mathrm{Id}_s$.\end{lemma}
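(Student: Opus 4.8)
The plan is to prove the closed form for $\sigma^j$ by induction on $j$, and then read off the final statement $\sigma^s = t\,\mathrm{Id}_s$ as the special case $j=s$. The matrix $\sigma$ is essentially a cyclic shift operator with a factor of $t$ on the wrap-around entry, so the intuition is that $\sigma^j$ shifts indices by $j$, and each time an index wraps past $s$ it picks up one factor of $t$. The two sums in the claimed formula record exactly this: the term $\sum_{i=1}^{s-j}E_{i,i+j}$ collects the entries that have not yet wrapped around (these are the $1$'s on the $j$-th superdiagonal), while the term $t\sum_{i=1}^{j}E_{s+i-j,i}$ collects the $j$ entries that have wrapped (these are $t$'s sitting in the lower-left block).

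First I would set up the base case $j=0$, where the second sum is empty and the first gives $\sum_{i=1}^{s}E_{i,i}=\mathrm{Id}_s$, matching $\sigma^0$. Then, assuming the formula for $\sigma^j$ with $0\le j<s$, I would compute $\sigma^{j+1}=\sigma\cdot\sigma^{j}$ using $\sigma=\sum_{p=1}^{s-1}E_{p,p+1}+tE_{s,1}$ and the elementary multiplication rule $E_{a,b}E_{c,d}=\delta_{b,c}E_{a,d}$. The product of $\sum_p E_{p,p+1}$ with the un-wrapped part $\sum_{i=1}^{s-j}E_{i,i+j}$ shifts each index by one more, producing $\sum_{i=2}^{s-j}E_{i-1,i+j}=\sum_{i=1}^{s-j-1}E_{i,i+j+1}$, which is precisely the un-wrapped part of $\sigma^{j+1}$; the boundary term with $i=1$ (i.e. $E_{1,1+j}$) is hit instead by $tE_{s,1}$, yielding $tE_{s,1+j}=tE_{s,(j+1)}$, which is exactly the new wrapped entry one needs. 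Multiplying $tE_{s,1}$ and $\sum_p E_{p,p+1}$ against the existing wrapped part $t\sum_{i=1}^{j}E_{s+i-j,i}$ and re-indexing then accounts for the remaining wrapped terms, and one checks the index ranges line up to give $t\sum_{i=1}^{j+1}E_{s+i-(j+1),i}$.

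The only real bookkeeping obstacle is making sure the index ranges and the single boundary contribution are handled cleanly so that no term is double-counted or dropped when the $j$-th superdiagonal entry at row $1$ crosses over and becomes a $t$-entry; this is where a careful case split on whether $i+j\le s$ is needed, and it is the step I would write out most explicitly. Once the inductive formula is established, I would substitute $j=s$: the first sum $\sum_{i=1}^{0}$ is empty, and the second becomes $t\sum_{i=1}^{s}E_{s+i-s,i}=t\sum_{i=1}^{s}E_{i,i}=t\,\mathrm{Id}_s$, giving $\sigma^s=t\,\mathrm{Id}_s$ as claimed.
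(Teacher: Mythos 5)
Your induction is correct: the base case, the four cross-terms in $\sigma\cdot\sigma^{j}$ (with the boundary term $tE_{s,1}E_{1,1+j}=tE_{s,j+1}$ supplying exactly the new wrapped entry and the product $tE_{s,1}\cdot t\sum_i E_{s+i-j,i}$ vanishing for $j<s$), and the specialization at $j=s$ all check out. The paper simply declares the lemma ``straightforward,'' so your argument is precisely the routine verification it leaves to the reader.
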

\begin{proof}
Straightforward. 
\end{proof}

Let $I_1,\dots, I_s$ be $k$-subsets. We then construct a rank $s$ $\B$-module 
$L(I_1,\dots,I_s)$. The maps $x_i$ and $y_i$ depend on the number of $k$-subsets 
among $I_1,\dots, I_s$ the index $i$ belongs to.
For $i\in[1,n]$ set $r_i=|\{I_j\mid i\in I_j\}|$. 

\begin{defi}\label{def:construction} 
Let $I_1,\dots, I_s$ be $k$-subsets of $[1,\dots,n]$. We define 
$L(I_1,\dots, I_s)$ as follows. \\
For $i=1,\dots, n$ let $V_i:=e_i L(I_1,\dots,I_s)=\C[[t]]\oplus \C[[t]]\oplus\dots\oplus\C[[t]]=Z^s$. 
Depending on the number of $k$-subsets $i$ belongs to, we define maps  
$x_i, y_i:Z^s\to Z^s$ as follows: 
%
\[
\begin{array}{ccl} 
	x_i:V_{i-1}\to V_i & \mbox{multiplication by} & \sigma^{s-r_i}  \\ 
	y_i:V_i\to V_{i-1} & \mbox{multiplication by} & \sigma^{r_i} 
\end{array}
\]
\end{defi}

\begin{remark}\label{rem:xy-ok}
Note that 
we have 
$x_iy_i=y_{i+1}x_{i+1}=\sigma^s=t \mathrm{Id}_s$ for all $i$ (Lemma~\ref{lm:tau-powers}). 
To see that $L(I_1,\dots, I_s)$ is a $\B$-module, we need to check that the 
relations $x^k=y^{n-k}$ hold. 
\end{remark}

\begin{remark}
The module $L(I_1,\dots,I_s)$ can be represented by a lattice diagram
$\mathcal{L}_{I_1,\dots,I_s}$ obtained by overlaying the lattice diagrams $\mathcal L_{I_j}$ 
such that $\mathcal L_{I_j}$ is above 
$\mathcal L_{I_{j+1}}$ for $j=1,\dots, s-1$, where the rims of $\mathcal L_{I_j}$ and 
of $\mathcal L_{I_{j+1}}$ are meeting at at 
least one vertex and possibly share 
arrows but have no two-dimensional intersections. 
The vector spaces $V_0,V_1,V_2,\ldots, V_n$ are represented by columns from
left to right (with $V_0$ and $V_n$ to be identified).
The vertices in each column correspond to some monomial 
$\mathbb C$-basis of $\mathbb C[t]\oplus\C[t]\oplus\cdots\oplus\C[t]$, 
depending on the sets $I_1,\dots, I_s$. 
Note that the $k$-subset $I_1$ can then be read off as
the set of labels on the arrows pointing down to the right which are exposed
to the top of the diagram. Labels of successive $k$-subsets can be read off from successive 
levels in the diagrams. 
To illustrate this for $s=2$, the lattice picture $\mathcal{L}_{I,J}$ for 
$I=\{2,5,8,9\}$ and $J=\{1,3,7,8\}$, $(k,n)=(4,9)$, is shown in 
Example~\ref{ex:construction}. 
\end{remark}

\begin{ex}\label{ex:construction}
Let $(k,n)=(4,9)$. Consider the $4$-subsets 
$I=\{2,5,8,9\}$ and $J=\{1,3,7,8\}$.  
The module $L(I,J)$ is illustrated in Figure~\ref{fig:lattice-embedding}. 
\begin{figure}[h]
\[
\includegraphics[scale=.6]{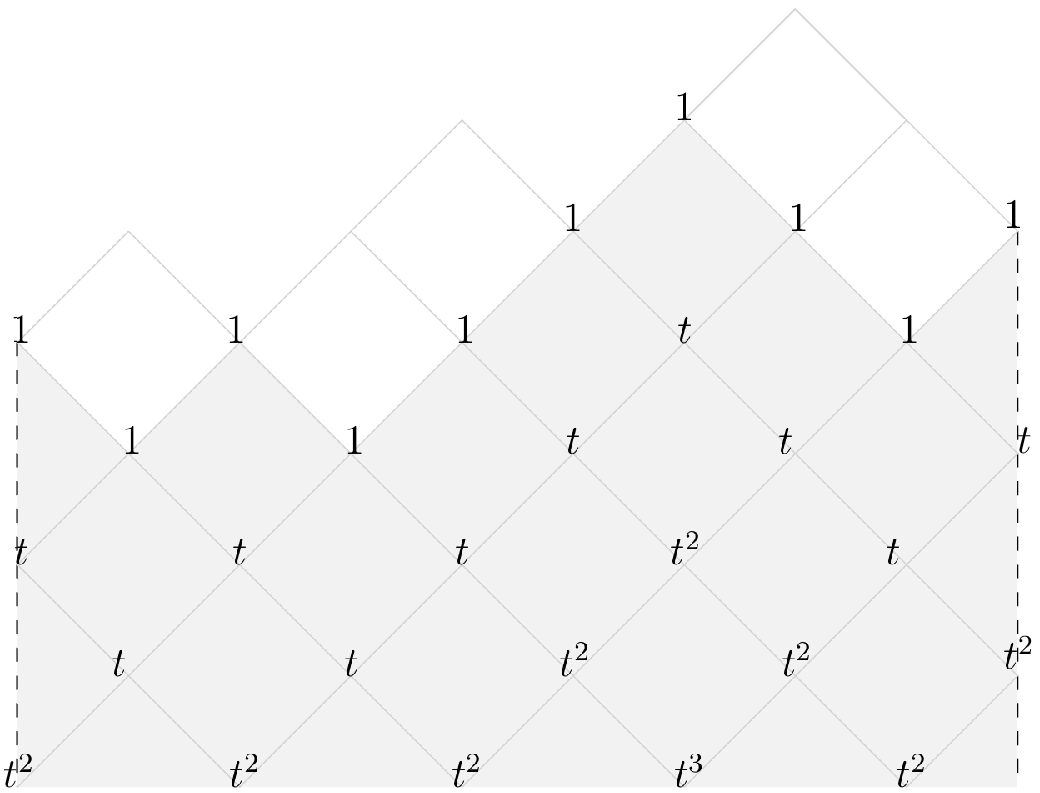}
\hskip 1cm
\includegraphics[scale=.6]{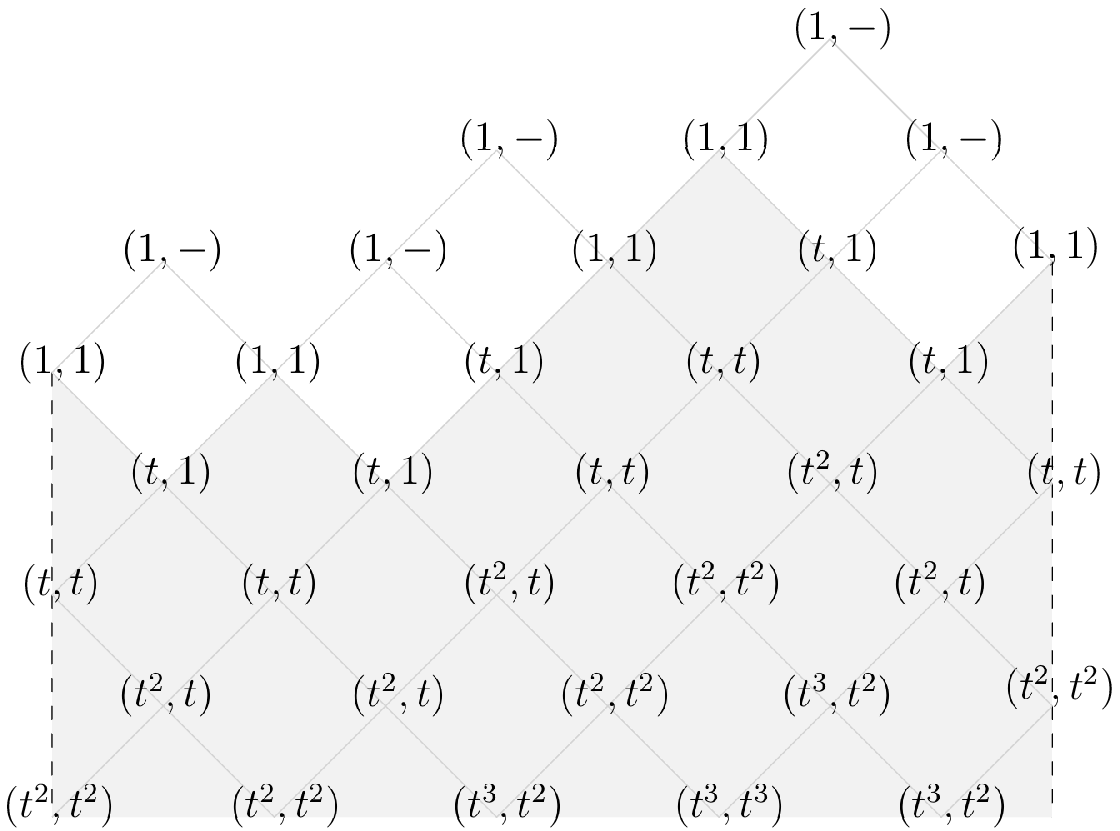}
\]
\caption{Lattice diagrams for $L(\{1,3,7,8\})$ and $L(\{2,5,7,8\},\{1,3,7,8\})$, 
$n=9.$}\label{fig:lattice-embedding}
\end{figure}
\end{ex}

\begin{ex}
(1) Let $s=2$. Then 
$\sigma=\begin{pmatrix} 0 & 1  \\ t & 0 \end{pmatrix}$ and 
$\sigma^2=\begin{pmatrix} t & 0  \\ 0 & t \end{pmatrix}$. 
Here, $x_i=y_i=\sigma$ for $i\in (I_1\setminus I_2)\cup(I_2\setminus I_1)$, 
$x_i=I_2$, $y_i=t\cdot I_2$ for $i\in I_1\cap I_2$ and $x_i=t I_2$, $y_i=I_2$ for 
$i\in I_1^c\cap I_2^c$. \\

\noindent
(2) Let $s=3$. Then 
$\sigma=\begin{pmatrix} 0 & 1 & 0 \\ 0 & 0 & 1 \\ t & 0 & 0 \end{pmatrix}$, 
$\sigma^2=\begin{pmatrix} 0 & 0 & 1 \\  t & 0 & 0\\ 0 & t & 0 \end{pmatrix}$ 
and 
$\sigma^3=\begin{pmatrix} t & 0 & 0 \\ 0 & t & 0 \\ 0 & 0 & t \end{pmatrix}$. 
An instance of this is in~\cite[Example 6.5]{JKS16} where 
$I_1=\{3,6,8\}$, $I_2=\{2,5,8\}$ and $I_3=\{1,4,7\}$ for $(k,n)=(3,8)$. 
\end{ex}

\begin{prop}\label{prop:higher-rank}
$L(I_1,\dots, I_s)\in \CM(\B)$. 
\end{prop}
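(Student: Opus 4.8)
The plan is to verify directly that the data $(V_i; x_i, y_i)$ of Definition~\ref{def:construction} defines a $\B$-module that is free over the center $Z = \C[[t]]$; since the maximal Cohen--Macaulay $\B$-modules are precisely those which are free as $Z$-modules (as recalled in the Background), this is exactly what must be checked. Each $V_i = Z^s$ is free of rank $s$ over $Z$ by construction, so the only real content is that the assigned maps satisfy the defining relations of $\B$: namely $x_iy_i = y_ix_i$ (the commutation relations) and $x^k = y^{n-k}$ read off along the rim.

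First I would dispatch the commutation relations. By Remark~\ref{rem:xy-ok}, Lemma~\ref{lm:tau-powers} already gives $x_i y_i = \sigma^{s-r_i}\sigma^{r_i} = \sigma^s = t\,\mathrm{Id}_s$ and likewise $y_{i+1}x_{i+1} = \sigma^{r_{i+1}}\sigma^{s-r_{i+1}} = t\,\mathrm{Id}_s$; since all the maps are powers of the single matrix $\sigma$, they commute with one another, so the relations $xy = yx$ at each vertex hold automatically. This is where the clever choice of $\sigma$ pays off: everything is a power of one commuting element, and the scalar $t\,\mathrm{Id}_s$ is central.

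The substantive step is the relation $x^k = y^{n-k}$. Fixing a starting vertex $i$, the composite $x^k$ is the product of $k$ consecutive $x$-maps, $x_{i+k}\cdots x_{i+1} = \sigma^{\,\sum_{j=1}^{k}(s - r_{i+j})}$, and $y^{n-k}$ is the corresponding product of $n-k$ consecutive $y$-maps, $\sigma^{\,\sum r}$ over the complementary arcs. So the relation reduces to the numerical identity that these two exponent sums agree modulo the relation $\sigma^s = t\,\mathrm{Id}_s$. The key bookkeeping fact is that $\sum_{i=1}^{n} r_i = ks$, because each of the $s$ sets $I_j$ contributes exactly $k$ to the total count (each $I_j$ is a $k$-subset). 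From this one reads off that, going all the way around the circle, the total $x$-exponent is $\sum_i (s-r_i) = ns - ks = (n-k)s$ and the total $y$-exponent is $\sum_i r_i = ks$, matching the overall relation $x^n \sim y^n \sim t^{?}$ consistently. I would carry out the exponent comparison carefully on a fixed window of $k$ consecutive edges, using $\sigma^s = t\,\mathrm{Id}_s$ to absorb full multiples of $s$.

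The main obstacle I anticipate is precisely this exponent calculation: one must confirm that for \emph{every} starting vertex $i$ the two sums $\sum_{j=1}^{k}(s-r_{i+j})$ and the complementary $y$-sum are equal \emph{after} reduction by $s$ via $\sigma^s = t\,\mathrm{Id}_s$, not merely on average around the whole cycle. Because each individual $r_{i+j}$ ranges in $\{0,1,\dots,s\}$ the local exponents can be large, so the verification genuinely depends on Lemma~\ref{lm:tau-powers} rewriting $\sigma^{as+b}$ as $t^a\sigma^b$ and then comparing the residues $b$. I expect the identity to follow from the global count $\sum r_i = ks$ together with a telescoping argument across the window, but establishing it uniformly in $i$ — rather than for one convenient base point — is the step that requires care. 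Once the two relations are in hand, freeness of each $V_i$ over $Z$ gives $L(I_1,\dots,I_s)\in\CM(\B)$ immediately.
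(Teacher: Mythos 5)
Your overall strategy is the same as the paper's: check the relations $xy=yx$ and $x^k=y^{n-k}$ for the maps of Definition~\ref{def:construction}, then conclude from freeness of each $V_i=Z^s$. The commutativity and freeness parts are fine. The gap is in the central step, which you defer and, more importantly, set up with the wrong criterion. You propose to verify that the $x$-exponent $\sum_{j=1}^{k}(s-r_{i+j})$ and the complementary $y$-exponent agree \emph{modulo} $s$, ``using $\sigma^s=t\,\mathrm{Id}_s$ to absorb full multiples of $s$.'' But congruence modulo $s$ does not give equality of the maps: if the exponents were $a$ and $b$ with $a-b=cs\neq 0$, then $\sigma^a=t^{c}\sigma^{b}\neq\sigma^{b}$, and the relation $x^k=y^{n-k}$ would \emph{fail}. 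Since $\sigma$ is injective and $\sigma^s=t\,\mathrm{Id}_s$ is not the identity, two powers of $\sigma$ coincide only when their exponents coincide as integers, so exact equality is what must be shown. The ``telescoping argument'' you anticipate is a red herring born of this misdiagnosis.

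The good news is that the exact equality is immediate from facts you already state. Starting at any vertex $i$, the path $x^k$ traverses the edges $i+1,\dots,i+k$ and the path $y^{n-k}$ traverses precisely the complementary $n-k$ edges. Hence the $y$-exponent is $\sum_{j\notin\{i+1,\dots,i+k\}}r_j=\bigl(\sum_{j=1}^{n}r_j\bigr)-\sum_{j=1}^{k}r_{i+j}=ks-\sum_{j=1}^{k}r_{i+j}=\sum_{j=1}^{k}(s-r_{i+j})$, which is exactly the $x$-exponent, uniformly in $i$ and with no reduction modulo $s$ needed. This is the content of the paper's proof, phrased there by multiplying both sides by $\sigma^{\sum_j r_{i+j}}$ to obtain $\sigma^{sk}$ in each case and then cancelling the (injective) power of $\sigma$. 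So your proposal is repairable in two lines, but as written the decisive identity is neither proved nor correctly formulated.
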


\begin{proof}
We first check that $L(I_1,\dots, I_s)$ is a $\B$-module. By Remark~\ref{rem:xy-ok}, 
it remains to 
show that the relations $x^k=y^{n-k}$ hold. By construction, all the $x_i$ and all the $y_i$ 
commute. 
We have 
\[
\begin{array}{lcl}
  x^k & = & x_{i+k-1} x_{i+k-2}\cdots x_{i+1}x_i= \sigma^{s-r_{i+k-1}} 
 \cdots\sigma^{s-r_{i+1}}\sigma^{s-r_{i}} = \sigma^{sk-(r_{i+k-1} + r_{i+k-2} + r_{i+1} + r_i)}\\
  y^{n-k} & = & y_{i+k} y_{i+k+1}\cdots y_{i-2}y_{i-1}=\sigma^{r_{i+k}}\sigma^{r_{i+k+1}} 
 \cdots \sigma^{r_{i-2}}\sigma^{r_{i-1}} = \sigma^{r_{i+k} + r_{i+k+1} + \dots + r_{i-2} + r_{i-1}}
\end{array}
\]
Now $r_1+r_2+\dots +r_{n-1} + r_n=sk=|\bigcup_{j=1}^s I_j|$ and so: 
\[
\begin{array}{lcl}
 \sigma^{r_{i+k-1} + r_{i+k-2} + r_{i+1} + r_i} x^k & = &  \sigma^{sk} \\
 \sigma^{r_{i+k-1} + r_{i+k-2} + r_{i+1} + r_i}y^{n-k} & = 
  & \sigma^{r_{1}+r_2 + \dots + r_{n-1} + r_n} = \sigma^{sk}
\end{array}
\]
Since all the powers of $\sigma$ are invertible, we get $x^k=y^{n-k}$. 

Consider $L(I_1,\dots, I_s)$ as a $Z$-module. It is a direct sum 
$V_1\oplus V_2\oplus \dots\oplus V_n$ where each of the $V_i$ has a basis of size $s$, 
hence $L(I_1,\dots, I_s)$ is free over $Z$. 
\end{proof}

%
\section{Rank $2$ modules and root combinatorics} \label{sec:rk2-roots-comb}
%

%


In this section, we deal with rigid indecomposable rank 2 modules and relate them with roots for 
associated Kac--Moody algebras.

\begin{defi}[$r$-interlacing]
Let $I$ and $J$ be two $k$-subsets of $[1,n]$. 
%
$I$ and $J$ are said to be {\em $r$-interlacing} if there exist subsets 
$\{i_1,i_3,\dots,i_{2r-1}\}\subset I\setminus J$ and $\{i_2,i_4,\dots, i_{2r}\}\subset J\setminus I$ 
such that $i_1<i_2<i_3<\dots <i_{2r}<i_1$ (cyclically) 
and if there exist no larger subsets of $I$ and of $J$ with this property. 
\end{defi}

If $I$ and $J$ are $r$-interlacing, then the poset of $I\mid J$ is $(1^r,2)$, see Figure~\ref{ex:pi-rk2} for $r=3$. The module in question is indecomposable for 
$r\ge 3$ (see Remark~\ref{rem:poset-indec}).

\begin{prop}
Let $I$ and $J$ be $r$-interlacing. Then there exist $0\leq a_1\leq a_2\leq \dots a_{r-1}$ such that, as $Z$-modules, 
$${\rm Ext}^1(L_I,L_J)\cong \mathbb C[[t]]/(t^{a_1})\times \mathbb C[[t]]/(t^{a_2}) \times \cdots \times \mathbb C[[t]]/(t^{a_{r-1}}). $$
\end{prop}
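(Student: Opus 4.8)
The plan is to compute $\Ext^1(L_I, L_J)$ directly by placing the rim of $L_J$ underneath the rim of $L_I$ and analyzing the resulting configuration, following the method of \cite[Theorem 3.1]{BB} which was already used in the proofs of the preceding propositions. Since $I$ and $J$ are $r$-interlacing, when the two rims are superimposed they cross exactly $2r$ times, and these crossings partition the region between the rims into exactly $r$ bounded ``boxes'' (diamonds), each box being built from two trapezia. The key structural input is that each such box contributes independently to the extension space, and the contribution of a single box is a cyclic $Z$-module $\mathbb{C}[[t]]/(t^{a})$ whose length $a$ is governed by the lengths of the lateral sides of the trapezia forming that box.

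First I would recall the explicit description of the map $D^*$ from \cite{BB}: its cokernel computes $\Ext^1(L_I, L_J)$. In the two-peak case handled in the previous proposition, $D^*$ was a $2\times 2$ matrix and the single relevant box produced one cyclic summand. For the general $r$-interlacing situation I expect $D^*$ to be (equivalent to) an $r\times r$ matrix with a circulant-like structure: each box links two adjacent crossing regions, so the matrix has diagonal entries $-t^{m_i}$ and off-diagonal entries $t^{m_i'}$ coupling consecutive boxes cyclically. The strategy is then to bring this matrix to Smith normal form over the principal ideal domain $\mathbb{C}[[t]]$ (or rather to compute the cokernel of the induced $Z$-linear map). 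Since $Z=\mathbb{C}[[t]]$ is a discrete valuation ring, any finitely presented torsion module decomposes as a direct sum $\bigoplus_i \mathbb{C}[[t]]/(t^{a_i})$ with $a_1 \le a_2 \le \dots \le a_{r-1}$, which is exactly the claimed form; the number of invariant factors is $r-1$ because one of the $r$ crossing relations is redundant (the rims are closed cycles, so the total displacement around the circle is constrained).

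The main obstacle I anticipate is justifying that the cokernel has precisely $r-1$ nonzero invariant factors rather than $r$, and more generally controlling the rank of the presentation matrix. Geometrically, going once around the circular lattice imposes one global compatibility relation among the $r$ local box-contributions, so the presentation matrix drops rank by exactly one; making this precise requires careful bookkeeping of how the trapezia fit together cyclically, exactly as in the $r=2$ case where the matrix $\left[\begin{smallmatrix} -t^{m_1} & t^{m_2} \\ t^{m_3} & -t^{m_4}\end{smallmatrix}\right]$ yielded a single cyclic summand. Once the cyclic structure of $D^*$ is pinned down, the decomposition into $r-1$ cyclic pieces follows from the structure theorem for finitely generated modules over the PID $\mathbb{C}[[t]]$, and the ordering $a_1 \le \dots \le a_{r-1}$ is just the convention for invariant factors. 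The lengths $a_i$ themselves are then minima of the relevant slope lengths, generalizing the $m_1 = \min\{d_1, l_2\}$, $m_2 = \min\{d_2, l_1\}$ formulas of Proposition~\ref{prop:I-omegaI}, though the statement as given only asserts existence of such $a_i$ and so this explicit identification is not strictly required.
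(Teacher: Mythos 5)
Your proposal follows essentially the same route as the paper's proof: both place the rim of $J$ beneath that of $I$ as in the proof of Theorem~3.1 of \cite{BB}, observe that $r$-interlacing produces $r$ alternating left and right trapezia and hence $r$ boxes, and then conclude that $\Ext^1(L_I,L_J)$ decomposes as a product of $r-1$ cyclic $Z$-modules by the argument of that theorem. The only difference is that the paper delegates the final step entirely to the citation, whereas you sketch the underlying mechanism (the presentation matrix $D^*$, its Smith normal form over the discrete valuation ring $\mathbb{C}[[t]]$, and the rank drop responsible for getting $r-1$ rather than $r$ invariant factors); this elaboration is consistent with the cited source and with the $2\times 2$ case worked out earlier in the paper.
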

\begin{proof}
We assume that we have drawn the rims $I$ and $J$ one above the other, say $I$ above $J$, as in the proof of Theorem 3.1  in \cite[Section 3]{BB}. For every $i_{2s}\in J\setminus I$ we have that rims $I$ and $J$ are not parallel between points $i_{2s-1}$ and $i_{2s}$, yielding a left trapezium.  Similarly, for every $i_{2s+1}\in I\setminus J$ we have that rims $I$ and $J$ are not parallel between points $i_{2s}$ and $i_{2s+1}$, yielding a right trapezium. Since $I$ and $J$ are $r$-interlacing, we have, in alternating order $r$-left and $r$-right trapezia, giving us in total $r$ boxes. The statement now follows from the proof of Theorem 3.1 in \cite{BB} which says that ${\rm Ext}^1(L_I,L_J)$ is a product of $r-1$ cyclic $Z$-modules. 
\end{proof}

\begin{cor}\label{cor:dim-ext}
Let $k=3$ and $I$ and $J$ be rims. Then ${\rm Ext}^1(L_I,L_J)\cong \mathbb C \times \mathbb C$  if and only if $I$ and $J$ are $3$-interlacing. 
\end{cor}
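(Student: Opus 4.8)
The plan is to derive Corollary~\ref{cor:dim-ext} directly from the preceding Proposition by specializing to $k=3$ and analyzing what interlacing numbers $r$ are even possible. First I would invoke the Proposition: for $I$ and $J$ that are $r$-interlacing, we have the $Z$-module decomposition
\[
\Ext^1(L_I,L_J)\cong \prod_{s=1}^{r-1}\mathbb{C}[[t]]/(t^{a_s}),
\]
with $0\le a_1\le\dots\le a_{r-1}$. Passing to dimensions over $\mathbb{C}$, one gets $\dim_{\mathbb{C}}\Ext^1(L_I,L_J)=\sum_{s=1}^{r-1}a_s$. The condition $\Ext^1(L_I,L_J)\cong\mathbb{C}\times\mathbb{C}$ then says this space has exactly two cyclic factors each isomorphic to $\mathbb{C}$, i.e.\ $r-1=2$ and $a_1=a_2=1$, which forces $r=3$. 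So the content of the corollary is really the \emph{equivalence}: when $k=3$, the interlacing number being $3$ is equivalent to both $a_s$ equalling $1$ and there being exactly two of them.

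The forward direction ($3$-interlacing $\Rightarrow \Ext^1\cong\mathbb{C}\times\mathbb{C}$) is where the hypothesis $k=3$ does the real work. With $k=3$, each rim has exactly three downward edges, so $I$ and $J$ can interlace at most three times; more importantly, when they are $3$-interlacing the three boxes produced in the Proposition's proof each have lateral sides of length exactly $1$. I would argue this by a direct count: a $3$-subset that is genuinely $3$-interlacing with another $3$-subset must have all its ``slopes'' short, forcing each $a_s=1$. Concretely, I would show that for $k=3$ the only way to achieve $r=3$ is essentially the configuration $\{1,3,5\}$ versus $\{2,4,6\}$ (up to rotation and the relative lengths of the connecting intervals), and in every such case the lateral sides are forced to length $1$, giving $a_1=a_2=1$ and hence $\Ext^1\cong\mathbb{C}^2$.

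For the reverse direction I would argue contrapositively: if $I$ and $J$ are $r$-interlacing with $r\ne 3$, then $\Ext^1(L_I,L_J)\not\cong\mathbb{C}\times\mathbb{C}$. If $r\le 1$ the rims are non-crossing (or equal) and $\Ext^1=0$ by the remark after Definition~\ref{def-rigid}, which is not $\mathbb{C}^2$. If $r=2$, the Proposition gives a single cyclic factor $\mathbb{C}[[t]]/(t^{a_1})$, which can never be isomorphic to $\mathbb{C}\times\mathbb{C}$ since it is a cyclic $Z$-module (a product of two nonzero cyclic modules is never cyclic). Since $k=3$ rules out $r\ge 4$, these are the only alternatives, so $\Ext^1\cong\mathbb{C}\times\mathbb{C}$ forces $r=3$.

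The main obstacle I anticipate is the careful bookkeeping in the forward direction: establishing that $3$-interlacing in the $k=3$ case forces \emph{both} that there are exactly two cyclic summands \emph{and} that each summand is exactly $\mathbb{C}$ rather than some $\mathbb{C}[[t]]/(t^{a})$ with $a>1$. This requires controlling the lateral side lengths of all three boxes in the trapezium picture of \cite{BB} simultaneously, using that each of $I,J$ has only three downward slopes so the intervals are necessarily short. I would handle this by observing that a $3$-subset interlacing another $3$-subset three times leaves no room for any slope of length exceeding $1$ between consecutive interlacing points, so every relevant lateral side equals $1$ and the result follows.
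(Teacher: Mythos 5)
Your argument is correct and follows essentially the same route as the paper: both deduce from the preceding proposition that $3$-interlacing in the $k=3$ case forces $I$ and $J$ to be disjoint unions of singleton intervals, so all lateral sides of the boxes equal $1$ and hence $a_1=a_2=1$, while the converse comes from counting the $r-1$ cyclic factors. You merely spell out the reverse direction (the cases $r\le 1$ and $r=2$) more explicitly than the paper, which leaves it implicit; that is a harmless elaboration, not a different method.
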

\begin{proof}
If $I$ and $J$ are $3$-interlacing, then they are both unions of three one-element sets. Hence, all the lateral sides in the boxes from the above proof are of length 1 and the statement follows since $a_i$ from the previous proposition are strictly positive, but at most equal to the lengths of the boxes involved. 
\end{proof}

\begin{cor}\label{cor:ext-1-dim} Let $k=3$. If $I$ and $J$ are crossing but not $3$-interlacing, then ${\rm Ext}^1(L_I,L_J)\cong \mathbb C$. 
\end{cor}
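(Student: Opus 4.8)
The plan is to leverage the structural result established in the previous two corollaries, treating the case $k=3$ as a complete trichotomy according to how $I$ and $J$ overlap. First I would recall that for $k=3$, any rim is a union of at most three intervals, and a crossing pair $I,J$ must intersect (as subsets) in a way that produces boxes when the rims are drawn one above the other as in \cite[Theorem 3.1]{BB}. Since $I$ and $J$ are crossing, they are not non-crossing, so $\Ext^1(L_I,L_J)\ne 0$; in particular at least one box with nontrivial lateral sides appears and the extension space is a nonzero product of cyclic $Z$-modules by the preceding proposition.

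The key step is a counting argument on the interlacing number $r$. By Corollary~\ref{cor:dim-ext}, $\Ext^1(L_I,L_J)\cong\mathbb{C}\times\mathbb{C}$ happens \emph{exactly} when $I$ and $J$ are $3$-interlacing, which is the maximal possible value of $r$ for $k=3$ (one cannot have four alternating elements of $I\setminus J$ since $|I|=3$). Thus if $I$ and $J$ are crossing but not $3$-interlacing, the interlacing number must be $r=2$ (it cannot be $r=1$, since $r=1$ together with the crossing hypothesis is impossible: a single alternation $i_1<i_2<i_1$ does not constitute a genuine crossing in the sense of the non-crossing definition, whereas crossing requires at least the pattern $a,c\in I\setminus J$ and $b,d\in J\setminus I$ cyclically ordered, i.e.\ $r\ge 2$). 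Applying the $r$-interlacing proposition with $r=2$ gives $\Ext^1(L_I,L_J)\cong\mathbb{C}[[t]]/(t^{a_1})$, a single cyclic factor.

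It then remains to show this single factor is exactly $\mathbb{C}$, i.e.\ $a_1=1$. Here I would invoke the same observation used in Corollary~\ref{cor:dim-ext}: for $k=3$ each rim decomposes into one-element intervals once it is crossing, so the lateral sides of the relevant box have length $1$, forcing $a_1\le 1$; and $a_1\ge 1$ because the rims are crossing so the extension is nonzero. Hence $a_1=1$ and $\Ext^1(L_I,L_J)\cong\mathbb{C}$.

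The main obstacle I anticipate is the careful justification that crossing forces $r\ge 2$ and that not-$3$-interlacing forces $r\le 2$, i.e.\ pinning down precisely that the only remaining interlacing value is $r=2$. This requires matching the combinatorial definition of crossing (weak separation) against the $r$-interlacing definition, and confirming that a box with a length-$1$ lateral side genuinely arises in the $k=3$, $r=2$ configuration. The length-$1$ claim itself is the routine part, reusing the box-side estimate from the previous proofs; the delicate point is the clean reduction to a single interlacing value, after which the bound $a_1\le 1$ and the nonvanishing $a_1\ge 1$ close the argument immediately.
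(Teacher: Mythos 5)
The paper states this corollary without any written proof, leaving it as an immediate consequence of the $r$-interlacing proposition and Corollary~\ref{cor:dim-ext}; your overall architecture is the intended one and most of it is sound. In particular, the reduction is correct: crossing means precisely that some $a,c\in I\setminus J$ and $b,d\in J\setminus I$ alternate cyclically, so $r\ge 2$; for $k=3$ one has $r\le |I\setminus J|\le 3$, so excluding $3$-interlacing leaves $r=2$; the proposition then gives a single cyclic factor $\mathbb C[[t]]/(t^{a_1})$, and $a_1\ge 1$ because crossing rims have non-vanishing $\Ext^1$ (exactly as in the paper's third corollary of that subsection).

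The gap is in your justification of $a_1\le 1$. You assert that ``for $k=3$ each rim decomposes into one-element intervals once it is crossing.'' That is false: $I=\{1,2,5\}$ and $J=\{3,4,7\}$ are crossing (take $2<3<5<7$ cyclically) and only $2$-interlacing, yet $I$ contains the two-element interval $\{1,2\}$. The decomposition into three singletons is forced precisely by $3$-interlacing --- the hypothesis you have excluded --- so the argument of Corollary~\ref{cor:dim-ext} cannot be transplanted verbatim, and with it goes your claim that the lateral sides of the box all have length $1$. The conclusion $a_1\le 1$ is still true, but it needs the slope estimate from the first proposition of the subsection: for $k=3$ a rim with two peaks splits as a $2$-interval and a $1$-interval, so the minimum $m$ of its slope lengths is $1$ and the proposition (applied to $I$ or, via the symmetry $\Ext^1(L_I,L_J)\cong\Ext^1(L_J,L_I)$, to $J$) gives $a_1\le m=1$. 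If both rims have three peaks one cannot literally invoke that proposition (it is stated for two-peak rims), but then every downward slope of either rim is a singleton and the same lateral-side bound from the proof of \cite[Theorem 3.1]{BB} forces $a_1\le 1$ directly. Your proof should replace the singleton claim by this case distinction.
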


Note that if in Corollary~\ref{cor:ext-1-dim} we have $L_J=\tau(L_I)$, then we are in the situation of Theorem~\ref{Teo rk1-new}.

\begin{prop}\label{prop:rank2-implication}
Assume that 
$(k,n)=(3,9)$ or $(k,n)=(4,8)$. Let $M\in \CM(\B)$ be a rigid indecomposable rank $2$ module. Then $M\cong L_I\mid L_J$ where $I$ and $J$ are $3$-interlacing. 
\end{prop}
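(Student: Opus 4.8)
The plan is to reduce, via the profile of a rigid indecomposable module, to showing that the two rims of $M$ are $3$-interlacing, and then to treat the two pairs $(k,n)$ separately, the genuine work lying in $(4,8)$. Since $M$ is rigid indecomposable of rank $2$, it is determined by its profile, so $M\cong L_I\mid L_J$ for some $k$-subsets $I,J$ (cf.\ the profile notation and the argument in the proof of Theorem~\ref{Teo rk1-new}). First I would note that $I\neq J$: rank $1$ modules are rigid, so $\Ext^1_B(L_I,L_I)=0$, and hence any extension $0\to L_I\to M\to L_I\to 0$ splits, contradicting indecomposability. With $I\neq J$, Example~\ref{ex:poset} shows that the quotient poset of $M$ is $(1^r,2)$ with $0<r\le k$; equivalently $I$ and $J$ are $r$-interlacing. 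By Remark~\ref{rem:poset-indec}, indecomposability forces $r\ge 3$. Thus $3\le r\le k$.

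For $(k,n)=(3,9)$ we have $k=3$, so necessarily $r=3$ and $I,J$ are $3$-interlacing; there is nothing more to prove. For $(k,n)=(4,8)$ we have $k=4$, so a priori $r\in\{3,4\}$, and the entire content of the statement is to exclude $r=4$. Here a $4$-interlacing pair is rigidly constrained: since $|I\setminus J|\ge 4$ while $|I|=4$, the sets $I$ and $J$ are disjoint and complementary, and $4$-interlacing forces them to alternate around $C_1$. Up to the rotational symmetry of $\B$ this leaves the single module $M\cong L_{\{1,3,5,7\}}\mid L_{\{2,4,6,8\}}$.

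To rule this out I would pass to the associated root vector. Here $\underline{a}(M)=(1,1,1,1,1,1,1,1)$, so using the quadratic form from Section~\ref{sec:roots},
\[
q(\underline{a}(M))=\sum_i a_i^2+\frac{2-k}{k^2}\Bigl(\sum_i a_i\Bigr)^2=8+\frac{2-4}{16}\cdot 64=0,
\]
so $\underline{a}(M)$ is the degree $2$ imaginary (null) root. I would then combine this with the fact that $\uCM(\B)$ is $2$-Calabi--Yau: there $\Ext^2(M,M)\cong D\,\underline{\Hom}(M,M)$, whence the Euler form satisfies $q(\underline{a}(M))=2\dim\underline{\Hom}(M,M)-\dim\Ext^1_B(M,M)$ under the root correspondence. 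Since $M$ has rank $2$ it is non-projective, hence nonzero in $\uCM(\B)$, so $\dim\underline{\Hom}(M,M)\ge 1$. If $M$ were rigid we would obtain $q(\underline{a}(M))=2\dim\underline{\Hom}(M,M)\ge 2$, contradicting $q(\underline{a}(M))=0$. Therefore $r=4$ cannot occur, and $I,J$ must be $3$-interlacing.

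The main obstacle is this last step: making precise the identification of $q$ with the homological Euler form on the $2$-CY category, i.e.\ matching the lattice $\mathbb{Z}^n(k)$ and the class $\underline{a}(M)$ with $K_0(\uCM(\B))$ through the functor $\pi$. A more hands-on alternative that sidesteps this identity uses periodicity: since $I+4=I$ and $J+4=J$, the formula $\Omega^2(L_I\mid L_J)=L_{I+k}\mid L_{J+k}$ from Section~\ref{sec:periodic} gives $\Omega^2 M\cong M$, hence $\tau^2 M\cong M$, so $M$ lies in a tube of rank dividing $2$; one then argues that a module whose dimension vector is the null root sits at the mouth of a homogeneous tube, where $\Ext^1_B(M,M)\cong D\,\underline{\Hom}(M,\tau M)=D\,\underline{\Hom}(M,M)\neq 0$. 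Either way, the crux is exhibiting a self-extension on the degree $2$ null root, which is precisely what distinguishes it from the $3$-interlacing (real-root) case.
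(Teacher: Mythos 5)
Your reduction (profile $M\cong L_I\mid L_J$, $I\neq J$, poset $(1^r,2)$ with $r\geq 3$, hence $(3,9)$ immediate and for $(4,8)$ only the pair $I=\{1,3,5,7\}$, $J=\{2,4,6,8\}$ left to exclude) is exactly the paper's reduction and is fine. The gap is in the exclusion step, and it is not a fixable technicality: \emph{no} argument based on the dimension vector $\underline{a}(M)$ or the root $\varphi(M)$ can work here. The paper itself records (after Remark~\ref{rm:roots-tame}, and again in Section~\ref{sec:count-all-48}) that $\CM(B_{4,8})$ contains eight \emph{rigid} indecomposable rank $2$ modules with profile of the form $1246\mid 3578$; these have $I\cap J=\emptyset$, $I\cup J=[1,8]$, hence the same dimension vector $\underline{a}=(1,\dots,1)$ and the same null root $q(\underline{a})=0$ as your module $L_{1357}\mid L_{2468}$. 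For such a module your proposed identity $q(\underline{a}(M))=2\dim\underline{\Hom}(M,M)-\dim\Ext^1_B(M,M)$ fails outright (left side $0$, right side $\geq 2$ since $M$ is indecomposable non-projective and rigid), so that identity is false in this setting, and rigidity is simply not determined by the class in the root lattice. The same counterexample destroys your fallback claim that ``a module whose dimension vector is the null root sits at the mouth of a homogeneous tube'': $L_{1246}\mid L_{3578}$ has null-root dimension vector yet is rigid and sits in the rank $4$ tube of Figure~\ref{fig:rank4}(A).

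The periodicity half of your alternative is also insufficient on its own. From $I+4=I$, $J+4=J$ and $\Omega^2(L_I\mid L_J)=L_{I+k}\mid L_{J+k}$ you only get $\tau^2M\cong M$, i.e.\ period dividing $2$; but rank $2$ tubes in $\CM(B_{4,8})$ do contain rigid modules at their mouths (e.g.\ $L_{1256}$ and $L_{1357}$, Figure~\ref{fig:rank2}), so $\tau$-period $2$ is perfectly compatible with rigidity, and one cannot conclude. What the paper proves is the stronger statement $\tau^{-1}M\cong M$: it computes the \emph{single} syzygy $\Omega(M)=\tau^{-1}(M)$ in $\CM(\Pi_{4,8})$ and checks that it has the same filtration as $M$; since a rigid module is determined by its profile, this forces $\tau^{-1}M\cong M$, and then the non-split Auslander--Reiten sequence $0\to M\to E\to \tau^{-1}M\to 0$ gives $\Ext^1_B(M,M)\neq 0$, contradicting rigidity. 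That one-step syzygy computation (distinguishing period $1$ from period $2$) is precisely the ingredient your proposal is missing, and, by the counterexample above, it cannot be replaced by root-theoretic considerations.
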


\begin{proof}
Let $M=L_I\mid L_J$ be rigid indecomposable, with $I$ and $J$ $r$-interlacing. 
Since $M$ is indecomposable, we 
get $r\in\{3,4\}$. 
If $r=4$, then we must have $k=4$, and the only $4$-interlacing $4$-subsets are $I=\{1,3,5,7\}$ 
and  $J=\{2,4,6,8\}$. Assume that $M=L_I| L_J$ is rigid. Then it has a filtration given by its 
profile. Moreover, if any other module is rigid with the same profile it is isomorphic to $M$. 
On the other hand, if $M$ is rigid, then $\tau(M)$ is also rigid. 
If we compute $\tau^{-1} M = \Omega (M)$ in $\CM (\Pi_{4,8})$, we obtain that they have the 
same filtration so $\tau^{-1} M = M$. So $M$ and $\tau^{-1}(M)$ are the end-terms of an 
Auslander-Reiten-sequence and 
$M$ is not rigid, a contradiction. 
Then $I$ and $J$ must be $3$-interlacing. 
\end{proof}

\begin{cor}\label{cor:real-root3}
Let $M\in \CM(B_{3,9})$ be a rigid indecomposable rank $2$ module. 
Then $\varphi(M)$ is a real root for $J_{3,9}$ of degree $2$. 
\end{cor}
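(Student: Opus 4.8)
The plan is to combine Proposition~\ref{prop:rank2-implication} with a direct evaluation of the quadratic form $q$ from Section~\ref{sec:roots}. By Proposition~\ref{prop:rank2-implication}, since $(k,n)=(3,9)$, the module $M$ is isomorphic to $L_I\mid L_J$ for some $3$-interlacing $3$-subsets $I$ and $J$. First I would extract the structural consequence of $3$-interlacing when $k=3$: the definition supplies elements $\{i_1,i_3,i_5\}\subseteq I\setminus J$, so $|I\setminus J|\ge 3$; since $|I|=3$ this forces $I\setminus J=I$, that is, $I\cap J=\varnothing$.

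Next I would compute the multiplicity vector $\underline{a}=\underline{a}(M)$. As $M$ has rank $2$, the entry $a_i$ is the multiplicity of $i$ in $I\cup J$, so $a_i\in\{0,1,2\}$, with $a_i=2$ exactly when $i\in I\cap J$. Disjointness gives $a_i\in\{0,1\}$, with exactly $|I|+|J|=6$ indices equal to $1$. Hence $\sum_i a_i=6$ and $\sum_i a_i^2=6$. Substituting into $q(\underline{a})=\sum_i a_i^2+\frac{2-k}{k^2}\bigl(\sum_i a_i\bigr)^2$ with $k=3$ gives
\[
q(\underline{a})=6+\frac{2-3}{9}\cdot 36 = 6-4 = 2,
\]
so $\underline{a}$ lies on the level set $q=2$ and $\varphi(M)$ is therefore a real root.

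Finally I would confirm the degree. Since each $\alpha_i\leftrightarrow -e_i+e_{i+1}$ has coordinate sum $0$ and $\beta\leftrightarrow e_1+\dots+e_k$ has coordinate sum $k=3$, expressing $\underline{a}$ in the simple-root basis shows that its degree (the coefficient at $\beta$) equals $\tfrac{1}{k}\sum_i a_i=2$, consistent with the rank of $M$. There is essentially no obstacle beyond invoking Proposition~\ref{prop:rank2-implication}: once the $3$-interlacing condition, and hence disjointness, is in hand, the conclusion reduces to the short computation above. The only point requiring a line of care is the deduction $I\cap J=\varnothing$ from $3$-interlacing, which is precisely where the hypothesis $k=3$ enters.
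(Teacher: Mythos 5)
Your proof is correct and follows essentially the same route as the paper: invoke Proposition~\ref{prop:rank2-implication} to get the $3$-interlacing profile $L_I\mid L_J$, observe that $I\cup J$ then consists of six distinct elements (the paper states this directly; you derive it via $|I\setminus J|\ge 3$ and $|I|=3$), and evaluate $q(\underline{a})=6-\tfrac{1}{9}\cdot 36=2$. Your added verification of the degree is a harmless elaboration of what the paper already records in Section~\ref{sec:roots}, namely that $\varphi(M)$ has degree equal to the rank of $M$.
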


\begin{proof}
By Proposition~\ref{prop:rank2-implication}, $M\cong L_I\mid L_J$ with $I$ and $J$ 3-interlacing, 
so 
$I\cup J$ consists of six distinct elements of $\{1,2,\dots, n\}$. But then 
in $\underline{a}(M)=(a_1,\dots,a_9)$ there are six entries equal to $1$ and three entries equal to 0. 
So $q(\underline{a})=\sum a_i^2 - \frac{1}{9}(\sum a_i)^2=6 - 4=2.$ 
\end{proof}

We observe that there exist rigid rank 2 modules corresponding to imaginary roots, 
an example is $L_{2568}\mid L_{1347}$ (\cite[Figure 13]{JKS16}). We expect that 
if we impose that the modules correspond to real roots, we get a 
counterpart to Proposition~\ref{prop:rank2-implication}. 

\begin{prop}\label{prop:rank2-to-real}
Let $M$ be a rank $2$ indecomposable module with profile $I|J$. If $M$ corresponds to a 
real root of $J_{k,n}$, then $|I\cap J|=k-3.$ 
\end{prop}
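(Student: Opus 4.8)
The goal is to show that if $M = L_I \mid L_J$ is a rank $2$ indecomposable module corresponding to a real root of $J_{k,n}$, then $|I \cap J| = k-3$. The natural strategy is to translate the real-root condition $q(\underline{a}(M)) = 2$ into a statement about $|I\cap J|$, using the explicit quadratic form $q(\underline{a}) = \sum_i a_i^2 + \frac{2-k}{k^2}(\sum_i a_i)^2$ recalled in Section~\ref{sec:roots}. First I would compute $\underline{a}(M)$ in terms of the overlap of the two rims. Writing $M = L_I \mid L_J$, the vector $\underline{a}(M) = (a_1,\dots,a_n)$ has $a_i$ equal to the multiplicity of $i$ in $I \cup J$ (as a multiset), so $a_i = 2$ when $i \in I \cap J$, $a_i = 1$ when $i \in I \triangle J$ (the symmetric difference), and $a_i = 0$ otherwise.

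Next I would plug these values into $q$. Let $c = |I \cap J|$. Since $|I| = |J| = k$, we have $|I \triangle J| = 2(k-c)$, and the number of entries equal to $2$ is $c$. Then $\sum_i a_i = 2c + 2(k-c) = 2k$ and $\sum_i a_i^2 = 4c + 2(k-c) = 2c + 2k$. Substituting into the quadratic form gives
\[
q(\underline{a}) = (2c + 2k) + \frac{2-k}{k^2}(2k)^2 = 2c + 2k + 4(2-k) = 2c + 8 - 2k.
\]
Setting this equal to $2$ (the real-root condition) yields $2c + 8 - 2k = 2$, hence $c = k - 3$, which is exactly the claim $|I\cap J| = k-3$. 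The computation is short, so the algebra should present no obstacle.

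The main thing requiring care is the justification that $\underline{a}(M)$ really has the multiset-multiplicity form described above, rather than, say, a subtler vector depending on how the rims interlace. For a rank $2$ module with profile $I \mid J$, the map $\underline{a}$ of Section~\ref{sec:roots} counts the multiplicity of each $i$ across the filtration factors $L_I, L_J$, and each rank $1$ factor $L_{I_t}$ contributes $1$ to coordinate $i$ precisely when $i \in I_t$; this is independent of the relative vertical placement of the rims in the lattice diagram. Thus the overlap count $c = |I \cap J|$ is the only invariant entering $q$, and the result is forced. I expect this step — confirming that $\underline{a}$ depends only on the multiset $I \uplus J$ and that the real-root test $q = 2$ is the correct criterion — to be the one place where one must cite the setup precisely, but it is conceptually straightforward given the definitions already in the excerpt.
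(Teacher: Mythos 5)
Your proof is correct and follows essentially the same route as the paper: express $\underline{a}(M)$ in terms of $c=|I\cap J|$ (entries $2$ on the intersection, $1$ on the symmetric difference) and evaluate the quadratic form $q$. The paper writes $c=k-3-m$ and first rules out $c>k-3$ via the poset/indecomposability argument before computing $q(\underline{a})=2-2m$, but your direct computation $q(\underline{a})=2c+8-2k=2$ forces $c=k-3$ without that extra step.
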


\begin{proof} Assume that $|I\cap J|=k-3-m$ for some $m\geq 0$. Note that $|I\cap J|$ cannot 
be greater than $k-3$, because in this case, the corresponding poset would be either $(1^2,2)$ 
or $(1,2)$, which do not correspond to an indecomposable module. If there are $k-3-m$ 
common elements in $I$ and $J$, then the corresponding vector, say $\underline{a}$, has $k-3-m$ 
coordinates equal to 2, $2(m+3)$ coordinates equal to 1, and the rest are equal to 0. If we apply 
our quadratic form $q$ to this vector $\underline{a}$, then we get that $q(\underline{a})=2-2m$. 
In order for $\underline{a}$ to be real, it has to be that $m=0$.
\end{proof}

\begin{ex} Assume $(k,n)=(3,9)$ and $M$ is a rank $2$ indecomposable module with profile $I|J$. 
The conditions on $I$ and $J$ imply that the form $q$ of such a module 
evaluates to 2, as in that case, $\underline{a}=(1,1,1,1,1,1,0,0,0)$, up to permuting 
the entries. 
\end{ex}

\begin{cor}\label{cor:rk2-indec-poset}
Let $M$ be a rank $2$ indecomposable module corresponding to a real root. Then the poset of $M$ is $(1^3,2)$.  
\end{cor}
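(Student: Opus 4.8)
The plan is to combine the two preceding results, Proposition~\ref{prop:rank2-to-real} and Remark~\ref{rem:poset-indec}, in a direct way. The statement to prove is that a rank $2$ indecomposable module $M$ corresponding to a real root has poset $(1^3,2)$. First I would write $\pr(M)=I|J$ using the profile notation, which is available because a rigid indecomposable module of rank $2$ is determined by its filtration into rank $1$ modules. The key numerical input is Proposition~\ref{prop:rank2-to-real}, which tells us that if $M$ corresponds to a real root of $J_{k,n}$, then $|I\cap J|=k-3$.

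The next step is to translate the intersection count $|I\cap J|=k-3$ into a statement about the $r$-interlacing number, and hence about the poset. Recall from the discussion just after the definition of $r$-interlacing that if $I$ and $J$ are $r$-interlacing, then the poset of $I|J$ is exactly $(1^r,2)$. So it suffices to show that $I$ and $J$ are $3$-interlacing. Since both $I$ and $J$ are $k$-subsets and they share exactly $k-3$ elements, each of $I\setminus J$ and $J\setminus I$ has exactly $3$ elements. The common elements contribute the $k-3$ entries equal to $2$ in the vector $\underline{a}(M)$, while the symmetric-difference elements give $6$ entries equal to $1$; these latter are precisely the vertices sitting at the top level of the poset that are not doubled, so they determine the interlacing pattern.

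The remaining point is to argue that with exactly three elements in each of $I\setminus J$ and $J\setminus I$, the interlacing number $r$ must be exactly $3$ rather than something smaller. Here I would invoke indecomposability via Remark~\ref{rem:poset-indec}: a rank $2$ indecomposable has poset $(1^r,2)$ with $r\ge 3$, so $r\ge 3$. On the other hand, $r$ is bounded above by $\min(|I\setminus J|,|J\setminus I|)=3$, since an $r$-interlacing requires $r$ elements from each of $I\setminus J$ and $J\setminus I$ in alternating cyclic order. Combining $r\ge 3$ and $r\le 3$ forces $r=3$, so the poset is $(1^3,2)$.

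The step I expect to be the only real obstacle is confirming that $r=3$ is actually \emph{achieved} — that the three elements of $I\setminus J$ and the three of $J\setminus I$ genuinely alternate cyclically rather than clustering in a way that would give a smaller $r$. This is where indecomposability is doing the essential work: if the elements did not alternate enough to force $r\ge 3$, the poset would be $(1,2)$ or $(1^2,2)$, which by Remark~\ref{rem:poset-indec} cannot support an indecomposable rank $2$ representation (these are dimension vectors for $A_2$ and $A_3$ quivers). Thus the assumed indecomposability of $M$ precisely rules out the degenerate interlacing patterns, and the upper bound $r\le 3$ from the cardinality count completes the argument. I would keep the proof short, essentially just citing Proposition~\ref{prop:rank2-to-real} for $|I\cap J|=k-3$ and Remark~\ref{rem:poset-indec} for $r\ge 3$, then noting the matching upper bound.
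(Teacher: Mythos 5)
Your argument is correct and is essentially the paper's own proof: the paper likewise rules out $r\ge 4$ via Proposition~\ref{prop:rank2-to-real} (stated there in contrapositive form, $r\ge 4$ forces $|I\cap J|<k-3$) and rules out $r\le 2$ via the indecomposability criterion of Remark~\ref{rem:poset-indec}. Your extra paragraph worrying about whether $r=3$ is achieved is resolved exactly as you say, so nothing is missing.
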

\begin{proof} If the poset is of the form $(1^r,2)$, where $r\geq 4$, then $|I\cap J |<k-3$. If $r\leq 2$, then the module in 
question is not indecomposable. \end{proof}

It follows that necessary conditions for a rank 2 indecomposable module $M$ with profile $I|J$ to be rigid are $|I\cap J|=k-3$ and that the poset of 
$M$ is $(1^3,2)$. Unfortunately,  we do not know if these conditions are sufficient in the general case. We will show that these 
conditions are sufficient in the tame cases, and we conjecture that it holds in general.  

\begin{notation}\label{notation:tight}
Let $I$ and $J$ be $3$-interlacing $k$-subsets. We say that $I$ and $J$ are 
{\em tightly $3$-interlacing} if $|I\cap J|=k-3$. 
\end{notation}

\begin{lemma}\label{lm:rank-2-ok}  
Let $M\in \CM(\B)$ be a module with $M\cong L_I\mid L_J$ where $I$ and $J$ are tightly $3$-interlacing. 
Then $M$ is indecomposable and $q(M)$ is a real root for $J_{k,n}$. 
\end{lemma}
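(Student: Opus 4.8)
The plan is to treat the two assertions separately, since they rest on different facts from the excerpt.

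For indecomposability I would argue purely combinatorially, through the quotient poset. As $I$ and $J$ are tightly $3$-interlacing they are in particular $3$-interlacing, so the poset of $M=L_I\mid L_J$ is $(1^3,2)$ (this is the dictionary recorded immediately after the definition of $r$-interlacing). Since $3\ge 3$, Remark~\ref{rem:poset-indec} says that a rank $2$ module whose poset is $(1^r,2)$ with $r\ge 3$ is precisely an indecomposable subspace configuration, so $M$ is indecomposable. The only bookkeeping point is that tightness forces $|I\setminus J|=|J\setminus I|=3$ (from $|I|=|J|=k$ and $|I\cap J|=k-3$), which guarantees that an alternating cyclic sequence of length $6$ genuinely exists and hence that the interlacing number is exactly $3$ rather than smaller.

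For the root statement I would compute the class $\underline{a}(M)\in\mathbb{Z}^n$ explicitly and evaluate the quadratic form $q$, mirroring the proof of Corollary~\ref{cor:real-root3}. Using $|I\cap J|=k-3$, the multiplicity vector $\underline{a}(M)=(a_1,\dots,a_n)$ has the $k-3$ entries indexed by $I\cap J$ equal to $2$, the $6$ entries indexed by $I\bigtriangleup J$ equal to $1$, and all remaining entries equal to $0$. Then
\[
\sum_i a_i = 2(k-3)+6 = 2k, \qquad \sum_i a_i^2 = 4(k-3)+6 = 4k-6,
\]
and substituting into $q(\underline{a})=\sum_i a_i^2+\tfrac{2-k}{k^2}\bigl(\sum_i a_i\bigr)^2$ gives
\[
q(\underline{a}) = (4k-6)+\tfrac{2-k}{k^2}(2k)^2 = (4k-6)+4(2-k) = 2 .
\]
Since $\sum_i a_i=2k$ is divisible by $k$, the vector $\underline{a}$ lies in $\mathbb{Z}^n(k)$, so it is a genuine element of the root lattice; as $q(\underline{a})=2$ it is a real root, and it has degree $2$ because $M$ has rank $2$. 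This identifies $\varphi(M)$ as a real root of degree $2$ for $J_{k,n}$.

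I do not anticipate a serious obstacle: both halves reduce to results already available in the excerpt (the interlacing–poset correspondence together with Remark~\ref{rem:poset-indec} for indecomposability, and the explicit shape of $q$ for the root). The only delicate steps are the two cardinality checks forced by tightness — confirming $|I\setminus J|=|J\setminus I|=3$ so that the poset is exactly $(1^3,2)$ — and verifying that the cross term in $q$ cancels to leave precisely $2$ independently of $n$, as in the displayed computation.
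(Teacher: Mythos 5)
Your proof is correct and follows essentially the same route as the paper: the poset of $M$ is $(1^3,2)$, forcing indecomposability via Remark~\ref{rem:poset-indec}, and the multiplicity vector $(2^{k-3},1^6,0,\dots,0)$ evaluates to $q(\underline{a})=2$. Your version simply spells out the cardinality check and the cancellation in the quadratic form, which the paper leaves implicit.
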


\begin{proof}
The poset of $M$ is $(1^3,2)$, thus $M$ is indecomposable. Since $|I\cap J|=k-3$, up to permuting the entries, 
$\underline{a}(M)=(\underbrace{2,\dots,2}_{k-3},\underbrace{1,\dots, 1}_6,0,\dots, 0)$, yielding $q(M)=q(a)=2$. 
\end{proof}

\begin{lemma}\label{lm:rank2-constr-ok}
(1) Let $I$ and $J$ be $r$-interlacing, for $r\ge 3$. Then $L(I,J)$ is a rank $2$ module 
in $\CM(\B)$ with filtration $L_I\mid L_J$. \\
(2) If $I$ and $J$ are tightly $3$-interlacing, then $L(I,J)$ and $L(J,I)$ are 
indecomposable. 
\end{lemma}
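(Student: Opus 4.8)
The statement has two parts. For part (1), I want to show that $L(I,J)$, built via the $s=2$ instance of Definition~\ref{def:construction}, is a rank $2$ Cohen--Macaulay module admitting a filtration $0 \to L_I \to L(I,J) \to L_J \to 0$. Cohen--Macaulayness and the rank statement are already delivered by Proposition~\ref{prop:higher-rank} applied to $s=2$, so the real content is exhibiting the filtration. The plan is to write down the explicit submodule inclusion. Recall from the $s=2$ example that $\sigma = \left(\begin{smallmatrix} 0 & 1 \\ t & 0\end{smallmatrix}\right)$, and the maps are $x_i=y_i=\sigma$ on symmetric differences, $x_i=\mathrm{Id}_2$, $y_i=t\,\mathrm{Id}_2$ on $I\cap J$, and $x_i=t\,\mathrm{Id}_2$, $y_i=\mathrm{Id}_2$ on the common complement. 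I would identify $L_J$ with the copy of $Z$ sitting in the first coordinate of each $V_i=Z^2$ and check that this subspace is closed under all the $x_i$ and $y_i$: on $I\cap J$ and its complement the maps are diagonal (scalar), so they clearly preserve the first coordinate; on the symmetric difference the map is $\sigma$, which sends the first basis vector to a multiple of the second, so a naive coordinate splitting does not work. The correct approach is to read off the sub/quotient structure from the lattice-overlay picture: $L_J$ is the lower rim, $L_I$ the upper rim, and the canonical inclusion places $L_J$ as a submodule with quotient $L_I$, matching the profile convention $L_I\mid L_J$ of the Notation block. I expect the cleanest argument identifies, at each vertex $i$, the one-dimensional $Z$-submodule corresponding to the lower lattice $L_J$ and verifies directly that $\sigma$ carries these lines into one another compatibly; the $r$-interlacing hypothesis with $r\ge 3$ guarantees (via Remark~\ref{rem:poset-indec}) that the configuration is genuinely rank $2$ and that the overlay is admissible, i.e.\ the two rims meet but have no two-dimensional overlap.

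For part (2), I must upgrade ``rank $2$ module with filtration $L_I\mid L_J$'' to ``indecomposable.'' Under the tight $3$-interlacing hypothesis, Lemma~\ref{lm:rank-2-ok} already tells us that any module isomorphic to $L_I\mid L_J$ is indecomposable, because its poset is $(1^3,2)$ and $r=3\ge 3$ forces indecomposability through Remark~\ref{rem:poset-indec}. So the plan is simply to invoke part (1) to see that $L(I,J)$ has filtration $L_I\mid L_J$, hence has poset $(1^3,2)$, and then apply Lemma~\ref{lm:rank-2-ok} (or directly Remark~\ref{rem:poset-indec}) to conclude indecomposability. The same argument with the roles of $I$ and $J$ swapped handles $L(J,I)$, since tight $3$-interlacing is symmetric in $I$ and $J$.

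The main obstacle I anticipate is entirely in part (1): verifying that the explicit matrices of Definition~\ref{def:construction} realize the stated filtration as genuine $\B$-module maps, rather than merely as a vector-space or lattice-diagram statement. The subtlety is that on the symmetric-difference edges the action is by $\sigma$, which mixes the two $Z$-summands, so the submodule $L_J$ cannot be the ``first coordinate'' uniformly; one must track how $\sigma$ twists the splitting as one moves around the circular quiver, and confirm consistency after going all the way around (closing up $V_0=V_n$) using $\sigma^2=t\,\mathrm{Id}_2$. I would resolve this by choosing, at each vertex, the $Z$-line dictated by the lower lattice $L_J$ in the overlay diagram and checking edge-by-edge that $x_i$ and $y_i$ preserve these lines; the relation $\sigma^2=t\,\mathrm{Id}_2$ from Lemma~\ref{lm:tau-powers} ensures the monodromy closes correctly. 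Once the inclusion $L_J\hookrightarrow L(I,J)$ is established, the quotient is forced to be rank $1$ with rim $I$, i.e.\ $L_I$, by rank-additivity on short exact sequences and the classification of rank $1$ modules (Proposition~\ref{d:moduleMI}).
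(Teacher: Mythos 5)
Your proposal is correct and follows essentially the same route as the paper: the paper also realizes $L_J$ as the lower copy in the overlay lattice (a diagonal-type embedding $a\mapsto(a,a)$, placed lattice-point-wise as high up as possible, i.e.\ adjusted vertex by vertex exactly as you describe to cope with the $\sigma$-twisting), and deduces part (2) from the fact that both modules have poset $(1^3,2)$. The only slip is the orientation of the short exact sequence in your opening sentence --- in the profile $L_I\mid L_J$ the submodule is $L_J$ and the quotient is $L_I$ --- which you yourself correct later in the plan.
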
 

Note that Lemma~\ref{lm:rank2-constr-ok} (2) provides modules with cyclically reordered filtration, as 
discussed in~\cite[Observation 8.2]{JKS16}. 

\begin{proof}
(1) 
The module $L_J$ embeds in $L(I,J)$ diagonally 
via the map $a\mapsto (a,a)$ lattice point wise, as in Figure~\ref{fig:lattice-embedding} of 
Example~\ref{ex:construction}, sending column $U_i$ to column $V_i$ in a way to 
get an image of $L_J$ as high up as possible. 
This yields an 
exact sequence $0\to L_J\to L(I,J)\to L_I\to 0$, giving the claimed filtration. 

(2) 
The indecomposability follows from the fact that for tightly 3-interlacing $k$-subsets 
both modules have poset $(1^3,2)$. 
\end{proof}


We expect that tightly 3-interlacing subsets always yield rigid modules. Combined with 
the preceding statements, this would give us: 

\begin{conj}\label{conj:interlacing-ok}
Fix $(k,n)$ with $k\ge 3$. 
Let $I$ and $J$ be tightly $3$-interlacing. Then $L(I,J)$ is a 
rigid indecomposable rank $2$ module. 
\end{conj}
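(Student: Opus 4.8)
The plan is to combine the results already established in the excerpt, reducing the conjecture to a single rigidity computation. By Lemma~\ref{lm:rank2-constr-ok}, whenever $I$ and $J$ are tightly $3$-interlacing the module $L(I,J)$ lies in $\CM(\B)$, carries the filtration $L_I\mid L_J$, and is already known to be indecomposable (its poset is $(1^3,2)$). Thus the only remaining content of the conjecture is the vanishing $\Ext^1_B(L(I,J),L(I,J))=0$. My first step would therefore be to fix notation so that $I$ and $J$ are tightly $3$-interlacing, record that $M:=L(I,J)$ sits in a short exact sequence $0\to L_J\to M\to L_I\to 0$, and reduce $\Ext^1_B(M,M)$ to a controllable object via the long exact sequences in both variables.

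Concretely, applying $\Hom_B(-,M)$ and $\Hom_B(M,-)$ to the defining sequence and assembling the resulting six-term sequences expresses $\Ext^1_B(M,M)$ in terms of the four groups $\Ext^1_B(L_I,L_I)$, $\Ext^1_B(L_I,L_J)$, $\Ext^1_B(L_J,L_I)$, $\Ext^1_B(L_J,L_J)$ together with connecting maps built from the class of the extension $0\to L_J\to M\to L_I\to 0$ itself. The rank $1$ pieces $\Ext^1_B(L_I,L_I)$ and $\Ext^1_B(L_J,L_J)$ vanish since rank $1$ modules are rigid. In the tame cases $(3,9)$ and $(4,8)$ the tightly $3$-interlacing condition forces $I$ and $J$ to be $3$-interlacing with all slopes of length one, so by Corollary~\ref{cor:dim-ext} each of $\Ext^1_B(L_I,L_J)$ and $\Ext^1_B(L_J,L_I)$ is $\C\times\C$, i.e.\ two-dimensional. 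The key point is then that the connecting maps induced by the nontrivial self-extension kill exactly the ``diagonal'' copies of these extension classes: the extension defining $M$ pairs each generator of $\Ext^1_B(L_I,L_J)$ with the corresponding obstruction, so that after taking the alternating sum the contributions cancel and $\Ext^1_B(M,M)=0$.

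To make the cancellation rigorous I would pass through the quotient functor $\pi\colon\CM(B)\to\CM(\Pi_{k,n})$ and work in the hereditary-like setting of $\uSub Q_k$, exactly as in Remark~\ref{Remark-rigidM}: since $\pi$ takes the sequence $0\to L_J\to M\to L_I\to 0$ to a short exact sequence and preserves $\Ext^1$ up to the projective $P_n$, and since $\pi(L_I),\pi(L_J)$ are rigid, the rigidity of the middle term $\pi(M)$ follows from \cite[Proposition 5.7]{rigid} provided one checks the relevant extension is indexed by a single class in each direction. The explicit matrix description of $\Ext^1$ from \cite{BB} (the $2\times 2$ matrix $D^*$ of powers of $t$, used in the proof of Proposition~\ref{prop:I-omegaI}) should let me verify that the map $\Ext^1_B(L_I,L_J)\to\Ext^1_B(L_J,L_J)=0$ and its partners behave as needed, and in the tame cases the dimension count $q(M)=2$ from Lemma~\ref{lm:rank-2-ok} confirms that $M$ sits on the mouth of a tube, which is consistent with rigidity.

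The hard part will be the general $(k,n)$ case, where the extension groups between $L_I$ and $L_J$ need not be one-dimensional and the middle term need not be at the mouth of a tube; the clean cancellation argument above genuinely relies on the slopes having length one, which is a feature of the tame (and finite) types but fails in the wild types. For general $k\ge 3$ one would instead need a direct computation of $\Ext^1_B(M,M)$ from the lattice model of $L(I,J)$ given in Definition~\ref{def:construction}, tracking how the matrices $\sigma^{r_i}$ interact across the three interlacing regions, and showing that every cocycle is a coboundary. Establishing that vanishing uniformly in $(k,n)$ is precisely the obstacle that keeps this statement a conjecture rather than a theorem, so my proof proposal fully settles only the tame cases and reduces the general case to this single $\Ext^1$-vanishing computation.
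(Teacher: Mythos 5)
First, note that the statement you are proving is stated in the paper as a \emph{conjecture}: the paper offers no proof of it in general, and you correctly identify at the end that the uniform $\Ext^1$-vanishing is exactly what is missing. So the only substantive question is whether your argument really ``fully settles'' the tame cases, and it does not: the key step is asserted rather than proved. Your reduction of $\Ext^1_B(M,M)$ to the four groups $\Ext^1_B(L_X,L_Y)$ with $X,Y\in\{I,J\}$ is fine, and the vanishing of the two self-extension groups is covered by the rigidity of rank~$1$ modules. But the claim that ``the connecting maps induced by the nontrivial self-extension kill exactly the diagonal copies \dots\ after taking the alternating sum the contributions cancel'' is not an argument. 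An Euler-characteristic/alternating-sum computation can only constrain an alternating sum of dimensions; it cannot force the single term $\Ext^1_B(M,M)$ to vanish. To conclude you would need to show, e.g., that $\Hom_B(M,L_I)\to\Ext^1_B(M,L_J)$ is surjective and $\Ext^1_B(M,L_I)=0$, and nothing in your write-up establishes the surjectivity of these connecting maps. Moreover, the tool you invoke to make this rigorous, Remark~\ref{Remark-rigidM} via \cite[Proposition 5.7]{rigid}, is stated (and applicable) only when $\dim\Ext^1(L_J,L_I)=1$; for tightly $3$-interlacing subsets this space is two-dimensional by Corollary~\ref{cor:dim-ext} (which, incidentally, is only proved for $k=3$, so even the two-dimensionality for $(4,8)$ needs the more general proposition on $r$-interlacing rims, and your claim that all slopes have length one is false there, e.g.\ for $I=\{8,1,3,5\}$). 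The case $\dim\Ext^1=2$ is precisely where the middle term of a nonsplit extension need not be rigid, so the gap sits at the heart of the problem. Finally, $q(M)=2$ (a real root) does not by itself place $M$ at the mouth of a tube or imply rigidity.

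For comparison, the paper's actual route in the tame cases is entirely different and avoids this homological computation: in Sections~\ref{sec:count-all} and~\ref{sec:count-all-48} the authors explicitly compute all tubes of the Auslander--Reiten quiver containing rank~$1$ and rank~$2$ modules, count $168$ (resp.\ $120$) rigid indecomposable rank~$2$ modules, and match this against the upper bound coming from Proposition~\ref{prop:rank2-implication} (every rigid indecomposable rank~$2$ module is $L_I\mid L_J$ with $I,J$ tightly $3$-interlacing, giving at most $2\binom{9}{6}=168$, resp.\ $120$, candidates). Since the counts agree, \emph{every} tightly $3$-interlacing pair must yield a rigid module, which proves the tame cases of the conjecture by exhaustion. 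If you want a proof in the spirit of your proposal, you would need either to compute $\Ext^1_B(M,M)$ directly from the lattice model of Definition~\ref{def:construction} (as you suggest for the general case), or to identify precisely which class in the two-dimensional space $\Ext^1_B(L_I,L_J)$ the module $L(I,J)$ represents and show that this particular extension has rigid middle term; neither is done here.
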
 

Theorem~\ref{Teo rk1-new} provides further evidence for 
Conjecture~\ref{conj:interlacing-ok} for arbitrary $(k,n)$: We can use part 1 to find 
many examples of rigid indecomposable rank 2 modules 
$L(X,Y)=L_X\mid L_Y$ where $X$ and $Y$ are tightly 3-interlacing $k$-subsets 
satisfying $|X\cap Y|=k-3$ by choosing $j=i+3$ in the theorem. 

\begin{conj}\label{conj:exclude-4}
Let $M$ be a rank $2$ module with poset $(1^r,2)$, for $r\ge 4$. Then $M$ is not rigid. 
\end{conj}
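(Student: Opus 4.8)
My plan is to decide rigidity not by the global quadratic form $q$ of $J_{k,n}$ — which cannot see the difference, since already for $r=3$ one finds rigid modules with imaginary $J_{k,n}$-root, such as $L_{2568}\mid L_{1347}$ — but by the \emph{local} subspace configuration attached to the poset $(1^r,2)$. Since $r\ge 4\ge 3$, Remark~\ref{rem:poset-indec} gives that $M$ is indecomposable and that its finite quotient poset is $(1^r,2)$. Via the covering functor from complete poset representations \cite[Chapter 13]{S93}, the module $M$ is determined (up to grade shift, \cite[Lemma 6.2, Remark 6.3]{JKS16}) by an indecomposable representation $N$ of this poset, namely a configuration of $r$ pairwise distinct lines in a two-dimensional space $M_*$.

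The representations of the poset $(1^r,2)$ are exactly the representations of the $r$-subspace quiver $\star_r$ (the star with $r$ arms pointing into a central vertex), with all arm-maps injective; the configuration $N$ then has dimension vector $d=(\underbrace{1,\dots,1}_{r};2)$. Consistently with the examples in Remark~\ref{rem:poset-indec}, the path algebra $\mathbb{C}\star_r$ is hereditary of type $D_4$, $\widetilde{D}_4$, and wild for $r=3,4,\ge 5$ respectively, and the Euler form on $d$ is
\[
\langle d,d\rangle=\sum_{i=1}^r 1^2+2^2-\sum_{i=1}^r 1\cdot 2 = (r+4)-2r = 4-r.
\]
Because $\mathbb{C}\star_r$ is hereditary, $\langle d,d\rangle=\dim_{\mathbb{C}}\End(N)-\dim_{\mathbb{C}}\Ext^1(N,N)$, and since $N$ is indecomposable its endomorphism ring is local, so $\dim_{\mathbb{C}}\End(N)\ge 1$. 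For $r\ge 4$ this forces
\[
\dim_{\mathbb{C}}\Ext^1(N,N)=\dim_{\mathbb{C}}\End(N)-(4-r)\ge 1+(r-4)\ge 1,
\]
so the line configuration $N$ admits a nontrivial self-extension. (For $r=3$, where $\langle d,d\rangle=1$, a brick $N$ is rigid, which is why the phenomenon starts exactly at $r=4$.)

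It then remains to transfer this self-extension downstairs. I would use that the poset covering functor is exact and compatible with extensions, so that a nonzero class in $\Ext^1(N,N)$ — equivalently in $\Ext^1(\widetilde{M},\widetilde{M})$ for the complete poset representation $\widetilde{M}$ lifting $N$ — produces a nonzero class in $\Ext^1_B(M,M)$. Concretely, one expects a decomposition $\Ext^1_B(M,M)\cong\bigoplus_{j}\Ext^1(\widetilde{M},t^{j}\widetilde{M})$ over grade shifts, whose $j=0$ summand is governed by $\Ext^1(N,N)$. Granting this, $\Ext^1_B(M,M)\ne 0$, so $M$ is not rigid, and since the computation $\langle d,d\rangle=4-r$ is independent of $(k,n)$, the argument is uniform.

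The main obstacle is precisely this last transfer step: one must show that the self-extension of the subspace configuration survives the covering functor into $\CM(\B)$, i.e.\ that the functor induces an injection on the relevant self-extension groups rather than killing them, and one must control the grade-shift summands. Verifying the exactness and $\Ext$-compatibility of the covering functor is the delicate point that keeps the statement at the level of a conjecture; once it is in place, the hereditary Euler-form computation does all the remaining work.
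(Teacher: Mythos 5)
First, note that the paper offers no proof of this statement: it is stated as Conjecture~\ref{conj:exclude-4} and is only verified indirectly in the tame cases (via the $\tau$-argument for $L_{1357}\mid L_{2468}$ in Proposition~\ref{prop:rank2-implication} and the explicit tube computations). So there is nothing in the paper to compare your argument against step by step; the question is whether your proposal actually closes the conjecture. Your local computation is correct and is a good explanation of \emph{why} the threshold sits at $r=4$: the poset $(1^r,2)$ corresponds to $r$ distinct lines in a plane, i.e.\ to the $r$-subspace quiver with dimension vector $(1^r;2)$, the Euler form gives $\langle d,d\rangle=4-r$, and for $r\ge 4$ every indecomposable such configuration has $\Ext^1\neq 0$ (equivalently, it moves in a positive-dimensional family of cross-ratios). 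This is consistent with Remark~\ref{rem:poset-indec} and with the $D_4/\widetilde{D}_4$/wild trichotomy.

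However, the step you yourself flag as "the main obstacle" is a genuine gap, and it is precisely the content of the conjecture. The paper is careful to claim only that \emph{indecomposability} of $M$ can be read off from the poset representation $\widetilde{M}$, and that a \emph{rigid} $M$ determines $\widetilde{M}$ up to grade shift; nowhere is it established that the covering/pull-back constructions induce an injection on self-extension groups, nor that a nonsplit extension of line configurations pulls back to a nonsplit, Cohen--Macaulay extension of $M$ by $M$ in $\CM(\B)$ (rather than being absorbed by the grade shifts or by the infinitely many $Z$-module directions). The posited decomposition $\Ext^1_B(M,M)\cong\bigoplus_j\Ext^1(\widetilde M,t^j\widetilde M)$ is plausible but unproven, and the category of complete poset representations (all maps inclusions) is not obviously exact in a way that makes $\Ext^1(N,N)$ for the finite quotient poset meaningful as a summand of it. An alternative route -- showing that varying the cross-ratio yields an algebraic family of pairwise non-isomorphic modules through $\pi(M)$ in the representation variety of $\Pi_{k,n}$, contradicting the open-orbit property of rigid modules -- faces the same unverified transfer. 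So what you have is a credible strategy and a correct heuristic, not a proof; as written it does not upgrade the conjecture to a theorem.
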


We recall that in the finite cases with $k=3$, the numbers of (rigid) indecomposable rank $2$ 
modules are 2 (for $n=6$), 14 (for $n=7$) and 56 (for $n=8$). All these correspond to real roots for the associated 
Kac--Moody algebra $J_{k,n}$. 

In the tame cases $(3,9)$ and $(4,8)$, we will show that there are, respectively,  168 and 120 rigid indecomposable modules of rank 2.  
This follows from Proposition~\ref{prop:rank2-implication} and the fact 
that in these cases, the real root in question  has to correspond to a $9$-tuple $(a_1,\dots, a_9)$ with 
six entries equal to  1 and zeros elsewhere, or to an $8$-tuple $(a_1,\dots, a_8)$ with one entry 
equal to 2, six entries equal to 1 and one 0 respectively.  We will confirm the above numbers 
explicitly in the next two sections  by computing all tubes that contain 
rank 2 modules in the Auslander-Reiten quiver.

This leads us to a conjectured formula for the number of rigid indecomposable rank 2 modules 
which correspond to 
real roots.  
The 3-interlacing property (Proposition~\ref{prop:rank2-implication}) yields the factor ${n\choose 6}$, 
a choice of 6 elements from $[1,n]$, say $\{1\le i_1<j_1<i_2<j_2<i_3<j_3\le n\}$, 
with $\{i_1,i_2,i_3\}\subset I$ and 
$\{j_1,j_2,j_3\}\subset J$. 
If Conjecture~\ref{conj:interlacing-ok} is true, each pair $I$ and $J$ of 3-interlacing subsets 
where the 
remaining $k-3$ labels are common to $I$ and $J$ yields 
two rigid indecomposable rank 2 modules. Using the map from indecomposable modules to roots for 
$J_{k,n}$ (see Section~\ref{sec:roots}) we see that these give rise to real roots. 
So there is a choice of $k-3$ elements from the remaining $n-6$ elements of $[1,n]$, yielding a factor ${n-6\choose k-3}$. 
Finally, there is a factor 2 which arises from the choice 
of which of these subsets is $I$ and which is $J$. 
The above arguments give an upper  
bound for the number of rigid indecomposable rank $2$ modules corresponding 
to real roots.

\begin{conj}\label{conj:counting}
Let $3\le k\le n/2$. 
For every real root $\alpha$ of degree $2$ there are exactly two non-isomorphic rigid indecomposable rank $2$ modules 
$M_1$ and $M_2$ such that $\varphi(M_1)=\varphi(M_2)=\alpha$. 
Thus, there are  $2{n \choose 6}{n-6\choose k-3}$ rigid indecomposable rank $2$ modules 
corresponding to real roots. 
\end{conj}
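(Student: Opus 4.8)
The plan is to prove the statement in two halves --- an upper bound showing that \emph{at most} two rigid indecomposable rank $2$ modules map to a given real root, and a lower bound (existence) showing that exactly two do --- and then to assemble the global count from the known structure of degree-$2$ real roots.

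For the upper bound, let $M$ be rigid indecomposable of rank $2$ with profile $I\mid J$ and $\varphi(M)=\alpha$ a real root of degree $2$. By Proposition~\ref{prop:rank2-to-real} we have $|I\cap J|=k-3$, and by Corollary~\ref{cor:rk2-indec-poset} the poset of $M$ is $(1^3,2)$, so $I$ and $J$ are tightly $3$-interlacing (Notation~\ref{notation:tight}). The root $\alpha$ fixes $\underline a(M)$, whose entries are at most $2$ since $M$ has rank $2$; thus the $k-3$ positions equal to $2$ recover $I\cap J$ and the six positions equal to $1$ recover the symmetric difference. The $3$-interlacing condition forces these six cyclically ordered positions to split into two alternating triples, and there is only one such unordered partition, so the unordered pair of $k$-subsets is determined by $\alpha$. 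As a rigid indecomposable is determined by its profile, the only candidates are $L_{S_1}\mid L_{S_2}$ and its reverse $L_{S_2}\mid L_{S_1}$, giving at most two modules.

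For the lower bound I would use the construction of Section~\ref{sec:higher-rank}: by Lemma~\ref{lm:rank2-constr-ok}(2) the modules $L(S_1,S_2)$ and $L(S_2,S_1)$ are indecomposable with profiles $S_1\mid S_2$ and $S_2\mid S_1$, by Lemma~\ref{lm:rank-2-ok} their class is $\alpha$, and they are non-isomorphic since $S_1\ne S_2$ (as $|S_1\cap S_2|=k-3<k$). What remains is to show both are \emph{rigid}, which is exactly Conjecture~\ref{conj:interlacing-ok}, and I expect this to be the principal obstacle. The natural attack is to compute $\Ext^1\bigl(L(S_1,S_2),L(S_1,S_2)\bigr)$ by applying $\Hom_B(-,L(S_1,S_2))$ and $\Hom_B(L(S_1,S_2),-)$ to the defining sequence $0\to L_{S_2}\to L(S_1,S_2)\to L_{S_1}\to 0$, reducing to the rank $1$ extension groups $\Ext^1(L_{S_i},L_{S_j})$ through the trapezium/box description of \cite[Theorem 3.1]{BB} and checking that the connecting maps kill the self-extensions; alternatively one passes through the quotient functor $\pi$ to $\CM(\Pi_{k,n})$ as in Remark~\ref{Remark-rigidM} and establishes rigidity of $\pi(M)$ in the preprojective setting, where more tools are available.

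Given both halves, each degree-$2$ real root of the shape $\underline a=(2^{k-3},1^6,0^{\,n-k-3})$ (up to permuting entries) carries exactly two modules, and the number of such vectors is $\binom{n}{k-3}\binom{n-k+3}{6}=\binom{n}{6}\binom{n-6}{k-3}$, yielding the total $2\binom{n}{6}\binom{n-6}{k-3}$. A secondary point that must be settled is that these are the \emph{only} degree-$2$ real roots relevant to the count: degree $2$ forces $\sum a_i=2k$, and $q(\underline a)=2$ then forces $\sum a_i^2=4k-6$, for which the admissible shape is the unique solution with all entries $\le 2$. Once $k\ge 6$ there are further integral solutions carrying an entry $\ge 3$ (e.g. $(3,2^{k-6},1^9,0^{\,*})$); since these cannot equal $\underline a(M)$ for any rank $2$ module, one must show they are not genuine real roots (or restrict the statement to roots arising from rank $2$ configurations) to keep the ``for every real root'' clause correct. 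In the tame cases $(3,9)$ and $(4,8)$ no such solutions occur, which is why Theorem~\ref{te:cyclemods} is unconditional there.
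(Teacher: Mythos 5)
The statement you are addressing is posed in the paper as a \emph{conjecture}, and your attempt reproduces essentially the paper's own chain of partial results without closing it. Your upper bound is the paper's: Proposition~\ref{prop:rank2-to-real} and Corollary~\ref{cor:rk2-indec-poset} force $|I\cap J|=k-3$ and poset $(1^3,2)$, the root determines the six positions with $a_i=1$ and hence (by the uniqueness of the alternating split) the unordered pair $\{I,J\}$, and a rigid indecomposable is determined by its profile; your count $\binom{n}{k-3}\binom{n-k+3}{6}=\binom{n}{6}\binom{n-6}{k-3}$ agrees with the paper's. For existence, Lemma~\ref{lm:rank2-constr-ok} and Lemma~\ref{lm:rank-2-ok} produce the two non-isomorphic indecomposables $L(S_1,S_2)$ and $L(S_2,S_1)$ in the class $\alpha$, exactly as you say. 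The genuine gap is the one you flag yourself: rigidity of these modules is precisely Conjecture~\ref{conj:interlacing-ok}, which the paper also leaves open. Neither of your two suggested attacks is carried out, and note that Remark~\ref{Remark-rigidM} cannot be invoked here, since for tightly $3$-interlacing subsets $\Ext^1(L_I,L_J)$ is two-dimensional (Corollary~\ref{cor:dim-ext}), not one-dimensional. So what you have is an account of why the conjecture is plausible together with the correct enumeration of \emph{indecomposable} rank $2$ modules over real roots (which is all the paper itself claims to have proved in the remark following the conjecture), not a proof.

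Where the paper actually establishes the statement --- the tame cases $(3,9)$ and $(4,8)$, i.e.\ Theorem~\ref{te:cyclemods} --- it uses an entirely different and non-conjectural route: an explicit computation of all Auslander--Reiten tubes containing rank $1$ and rank $2$ modules (Sections~\ref{sec:3-9} and~\ref{sec:4-8}), which exhibits $168$, respectively $120$, rigid rank $2$ indecomposables and matches them against the $84$, respectively $56$, real roots of degree $2$. Your proposal does not engage with that computation. Finally, your observation that for $k\ge 6$ the constraints $\sum a_i=2k$ and $\sum a_i^2=4k-6$ admit solutions with an entry $\ge 3$, which no rank $2$ module can realize, is a substantive point the paper does not address: if such vectors are genuine real roots, the ``for every real root'' clause of the conjecture would need to be restricted. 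That is a useful objection to the statement itself, but it is not a step toward proving it.
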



\begin{rem} 
Note that we have proved (Proposition~\ref{prop:rank2-to-real}, 
Corollary~\ref{cor:rk2-indec-poset}) 
that the number $2{n \choose 6}{n-6\choose k-3}$ is the number of indecomposable 
rank 2 modules corresponding to real roots. Combinatorially, indecomposable rank 2 
modules corresponding to real roots are determined by the conditions $|I\cap J|=k-3$ 
(which follows from the quadratic form) and the poset of the module is $(1^3,2)$ (which 
follows from the indecomposability requirement).
\end{rem}

%
\section{The tame case $(k,n)=(3,9)$}\label{sec:3-9}

Here we will describe all the tubes of $\CM(B_{3,9})$ which contain rank 1 and rank 2 modules. 
Recall that every exceptional tube of rank $s$ contains $s-1$ $\, $  $\tau$-orbits of rigid modules. We will 
write down the $\tau$-orbits where we can give explicit filtrations. 
In the end, we use this to find the number of rigid indecomposable rank 2 modules in 
$\CM(B_{3,9})$. 
We start by describing all such tubes of rank 6 in Figure~\ref{fig:rank6}. 
For example, Figure~\ref{fig:rank6}(A) shows one of the three tubes containing projective-injective modules, 
Figure~\ref{fig:rank6} (B) shows one of the three tubes with modules of the form 
$L_{i,i+1,i+6}$.

\begin{figure}[H]
\begin{subfigure}{0.4\textwidth}
$
\xymatrix@=0.4em{
789\ar@{--}[dd]\ar@{-}[rd] &&&& 123\ar@{-}[rd]   \\
\ar@{.}[r]& 178\ar@{-}[rd] \ar@{.}[rr] && 923\ar@{.}[rr]\ar@{-}[ru]\ar@{-}[rd]  && \dots \\ 
168\ar@{--}[dd]\ar@{.}[rr]\ar@{-}[rd]\ar@{-}[ru]  && \frac{279}{138}\ar@{.}[rr]\ar@{-}[rd]\ar@{-}[ru] && 
249\ar@{.}[r]\ar@{-}[ru]\ar@{-}[rd]   &   \\ 
\ar@{.}[r]  & \frac{269}{138}\ar@{.}[rr]\ar@{-}[rd]\ar@{-}[ru] && 
 \frac{279}{148}\ar@{.}[rr]\ar@{-}[rd]\ar@{-}[ru]  && \dots \\
 {\tiny\thfrac{469}{582}{713}} \ar@{.}[rr]\ar@{-}[ru]  && 
  \frac{269}{148} \ar@{.}[rr]\ar@{-}[ru]   && {\tiny\thfrac{379}{825}{146}}\ar@{.}[r]\ar@{-}[ru] &  
}
$
\caption{Tube with projective-injectives}
\end{subfigure}
\begin{subfigure}{0.4\textwidth}
$
\xymatrix@=0.4em{
 145\ar@{.}[rr]\ar@{--}[d]\ar@{-}[rd] 
  && 236\ar@{.}[rr]\ar@{-}[rd]  && 478   \ar@{-}[rd] \ar@{.}[r] & \\
\ar@{.}[r] &\frac{246}{135}\ar@{.}[rr]\ar@{-}[ru]   && 
 \frac{247}{368}\ar@{.}[rr]\ar@{-}[ru] && \dots 
}
$
\caption{Tube with $L_{145}$}
\end{subfigure}
%
\vskip .5cm
%
\begin{subfigure}{0.4\textwidth}
$
\xymatrix@=0.4em{
136\ar@{.}[rr]\ar@{--}[d]\ar@{-}[rd] && \frac{247}{358} \ar@{.}[rr]\ar@{-}[rd] 
 && 469 \ar@{.}[r]\ar@{-}[rd] & \\ 
 \ar@{.}[r] &\rkk 3 \ar@{-}[ru]\ar@{.}[rr]&& \rkk 3\ar@{-}[ru]\ar@{.}[rr] && \dots
}
$
\caption{Tube with $L_{136}$}
\end{subfigure}
\begin{subfigure}{0.4\textwidth}
$
\xymatrix@=0.4em{
137\ar@{.}[rr]\ar@{--}[d]\ar@{-}[rd]  && \frac{248}{359} \ar@{.}[rr]\ar@{-}[rd] 
 && 146 \ar@{.}[r]\ar@{-}[rd] & \\ 
 \ar@{.}[r]&\rkk 3 \ar@{-}[ru]\ar@{.}[rr] && \rkk 3 \ar@{-}[ru]\ar@{.}[rr] && \dots
}
$
\caption{Tube with $L_{137}$}
\end{subfigure}
%
\vskip .5cm
%
\begin{subfigure}{0.4\textwidth}
$
\xymatrix@=0.4em{
\frac{469}{357}\ar@{.}[rr]\ar@{--}[d]\ar@{-}[rd] && \frac{146}{258} \ar@{.}[rr]\ar@{-}[rd] 
 && \frac{379}{168} \ar@{.}[r]\ar@{-}[rd] &   \\
\ar@{.}[r]& \rkk 4\ar@{.}[rr]\ar@{-}[ru] && \rkk 4\ar@{.}[rr]\ar@{-}[ru] 
 && \dots 
}
$
\caption{Tube with $L_{469}\mid L_{357}$}
\end{subfigure}
\begin{subfigure}{0.4\textwidth}
$
\xymatrix@=0.4em{
\frac{579}{368}\ar@{.}[rr]\ar@{--}[d]\ar@{-}[rd] && \frac{147}{259} \ar@{.}[rr]\ar@{-}[rd] 
 && \frac{138}{269} \ar@{.}[r]\ar@{-}[rd] &   \\
\ar@{.}[r]& \rkk 4\ar@{.}[rr]\ar@{-}[ru] && \rkk 4\ar@{.}[rr]\ar@{-}[ru] 
 &&  \dots
}
$
\caption{Tube with $L_{579}\mid L_{368}$}
\end{subfigure}
%
\vskip .5cm
%
\begin{subfigure}{0.4\textwidth}
$
\xymatrix@=0.4em{
\frac{258}{146}\ar@{.}[rr]\ar@{--}[d]\ar@{-}[rd] && {\tiny\thfrac{359}{247}{136}} \ar@{.}[rr]\ar@{-}[rd]  && 
  \frac{258}{479}\ar@{-}[rd]  \ar@{.}[r]  &  \\
\ar@{.}[r]& \rkk 5\ar@{.}[rr]\ar@{-}[ru] && \rkk 5\ar@{.}[rr]\ar@{-}[ru] && \dots   
}
$
\caption{Tube with $L_{258}\mid L_{146}$}
\end{subfigure}
\begin{subfigure}{0.4\textwidth}
$
\xymatrix@=0.4em{
\frac{257}{146}\ar@{.}[rr]\ar@{--}[d]\ar@{-}[rd] && {\tiny\thfrac{358}{247}{369}} \ar@{.}[rr]\ar@{-}[rd] && 
 \frac{158}{479}\ar@{-}[rd] \ar@{.}[r]  &   \\ 
\ar@{.}[r]& \rkk 5\ar@{.}[rr]\ar@{-}[ru] && \rkk 5\ar@{.}[rr]\ar@{-}[ru] && \dots 
}
$
\caption{Tube with $L_{257}\mid L_{146}$}
\end{subfigure}
%
\vskip .5cm
%
\begin{subfigure}{0.4\textwidth}
$
\xymatrix@=0.4em{
\frac{268}{147}\ar@{.}[rr]\ar@{--}[d]\ar@{-}[rd] && {\tiny\thfrac{359}{248}{136}} \ar@{.}[rr]\ar@{-}[rd] 
 && \frac{259}{147}  \ar@{-}[rd] \ar@{.}[r] &     \\
\ar@{.}[r]& \rkk 5\ar@{.}[rr]\ar@{-}[ru] && \rkk 5\ar@{.}[rr]\ar@{-}[ru] 
 & & \dots
}
$
\caption{Tube with $L_{268}\mid L_{147}$}
\end{subfigure}
\begin{subfigure}{0.4\textwidth}
$
\xymatrix@=0.4em{
\frac{247}{136}\ar@{.}[rr]\ar@{--}[d]\ar@{-}[rd] && {\tiny\thfrac{258}{479}{368}} \ar@{.}[rr]\ar@{-}[rd] 
 && \frac{157}{469} \ar@{.}[r]\ar@{-}[rd]  &    \\
\ar@{.}[r]& \rkk 5\ar@{.}[rr]\ar@{-}[ru] && \rkk 5\ar@{.}[rr]\ar@{-}[ru] && \dots 
}
$
\caption{Tube with $L_{247}\mid L_{136}$}
\end{subfigure}
\caption{Rank 6 tubes for $\CM(B_{3,9})$}\label{fig:rank6}
\end{figure}
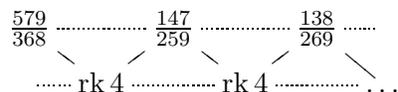
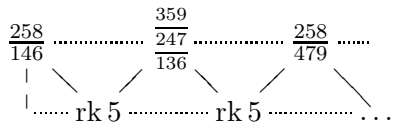
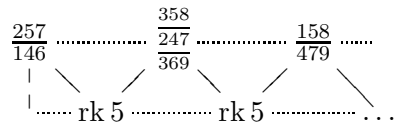

Next we give the rank 3 tubes containing rigid modules in Figure~\ref{fig:tube-R3}. There are two such types 
(three tubes of this form). 

\begin{figure}[H]
\begin{subfigure} {0.4\textwidth}
$
\xymatrix@=0.4em{
126\ar@{.}[rr]\ar@{--}[d]\ar@{-}[rd]   && 378\ar@{.}[rr]\ar@{-}[rd]  
 && 459\ar@{.}[rr]\ar@{-}[rd] && 126\ar@{--}[d]  \\
\ar@{.}[r] & \frac{137}{268} \ar@{.}[rr]\ar@{-}[ru]  && 
 \frac{479}{358}\ar@{.}[rr]\ar@{-}[ru]  && 
 \frac{146}{259}\ar@{.}[r]\ar@{-}[ru]  &  
}
$
\caption{Tube with $L_{126}$}
\end{subfigure}
\begin{subfigure}{0.4\textwidth}
$
\xymatrix@=0.4em{
\frac{358}{146}\ar@{.}[rr]\ar@{--}[dd]\ar@{-}[rd]   && \frac{259}{137}\ar@{.}[rr]\ar@{-}[rd]  
 && \frac{268}{479}\ar@{.}[rr]\ar@{-}[rd] && \frac{358}{146}\ar@{--}[dd]  \\
\ar@{.}[r] & \rkk 4 \ar@{.}[rr]\ar@{-}[ru] && 
 \rkk 4\ar@{.}[rr]\ar@{-}[ru] && 
\rkk 4\ar@{.}[r]\ar@{-}[ru] & \\
 && && && 
}
$
\caption{Tube with $L_{358}\mid L_{146}$}
\end{subfigure}
\caption{Rank 3 tubes for $\CM(B_{3,9})$} \label{fig:tube-R3} 
\end{figure}
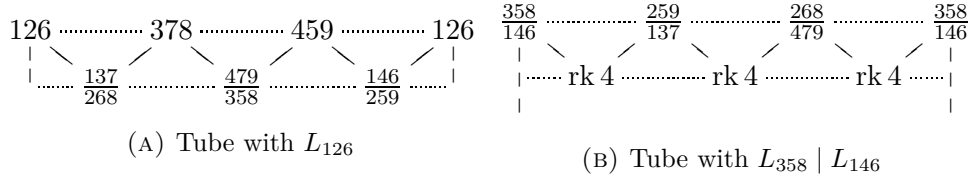

Furthermore, there are two types of tubes of rank 2, described in Figure~\ref{fig:tube-R2}. The modules in the second 
$\tau$-orbit are not rigid. To indicate this, they are written in grey. 

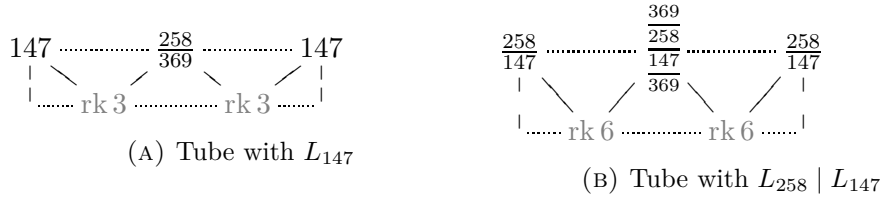
\begin{figure}[H]
\begin{subfigure}{0.4\textwidth}
$
\xymatrix@=0.4em{
147 \ar@{.}[rr]\ar@{--}[d]\ar@{-}[rd] && \frac{258}{369}  \ar@{.}[rr]\ar@{-}[rd] && 147\ar@{--}[d] \\
\ar@{.}[r]  & \textcolor{gray}{ \rkk 3}\ar@{-}[ru]\ar@{.}[rr] && \textcolor{gray}{ \rkk 3}\ar@{-}[ru]\ar@{.}[r] &&& 
}
$
\caption{Tube with $L_{147}$}
\end{subfigure}
\begin{subfigure}{0.4\textwidth}
$
\xymatrix@=0.4em{
\frac{258}{147}\ar@{.}[rr]\ar@{--}[d]\ar@{-}[rd]   &&{\tiny\ffrac{369}{258}{147}{369}} \ar@{.}[rr]\ar@{-}[rd]  
 &&\frac{258}{147}\ar@{--}[d]\\
\ar@{.}[r] &\textcolor{gray}{ \rkk 6} \ar@{.}[rr]\ar@{-}[ru]& &
\textcolor{gray}{  \rkk 6}\ar@{.}[r]\ar@{-}[ru]& 
}
$
\caption{Tube with $L_{258}\mid L_{147}$}
\end{subfigure}
\caption{Rank 2 tubes for $\CM(B_{3,9})$}\label{fig:tube-R2}
\end{figure}

%
\subsection{Counting rigid indecomposables for $Gr(3,9)$}\label{sec:count-all}

$\ $

\noindent By Proposition~\ref{prop:rank2-implication} there are at most 168 rigid indecomposable rank 2 modules in $\CM(B_{3,9})$, because there are 84 3-interlacing pairs 
$(I,J)$ in that case. The tubes with rank 1 modules contain 84 rigid rank 2 modules.  
The remaining 84 rank 2 modules are at the mouths of further tubes of rank 2,3 and 6. 
%
To sum up, the number of rigid indecomposable modules of rank 1, 2 and 3 in the above tubes, listed by rank is: 
\[
\begin{tabular}{|l|ccc|c}
\hline
rank & 1 & 2 & 3 \\
\hline 
$\,\,\,\#$ & 84 & 168 & 117   \\
\hline 
\end{tabular}
\]
We have collected all rank 1 and rank 2 rigid indecomposable modules. Since there are 168 rigid indecomposable modules 
whose roots are real, and 84 real roots of $J_{3,9}$ of degree 2, we have proved that for every real root of degree 2 there are two rigid indecomposable modules $L_J|L_I$ and $L_I|L_J$. Moreover in this case the number of rigid indecomposables of rank 2 is \emph{exactly} twice the number of real roots of degree 2.

Therefore,  both Theorem \ref{te:cyclemods} and Conjecture \ref{conj:counting} hold in this case.

%
\section{The tame case $(k,n)=(4,8)$}\label{sec:4-8}

Here we will describe all the tubes of $\CM(B_{4,8})$ which contain rank 1 and rank 2 modules, 
as in the previous section for $(k,n)=(3,9)$. We use this to find the number of rigid indecomposable 
rank 2 modules in this case. 
We start by describing all such tubes of rank 4 in Figure~\ref{fig:rank4}. 
For example, Figure~\ref{fig:rank4}(A) shows one of the four tubes containing projective-injective modules, 
Figure~\ref{fig:rank4} (B) shows one of the four tubes with rank 1 modules  for $4$-subsets of the form $I=\{i,i+1,i+2,i+5\}$.   

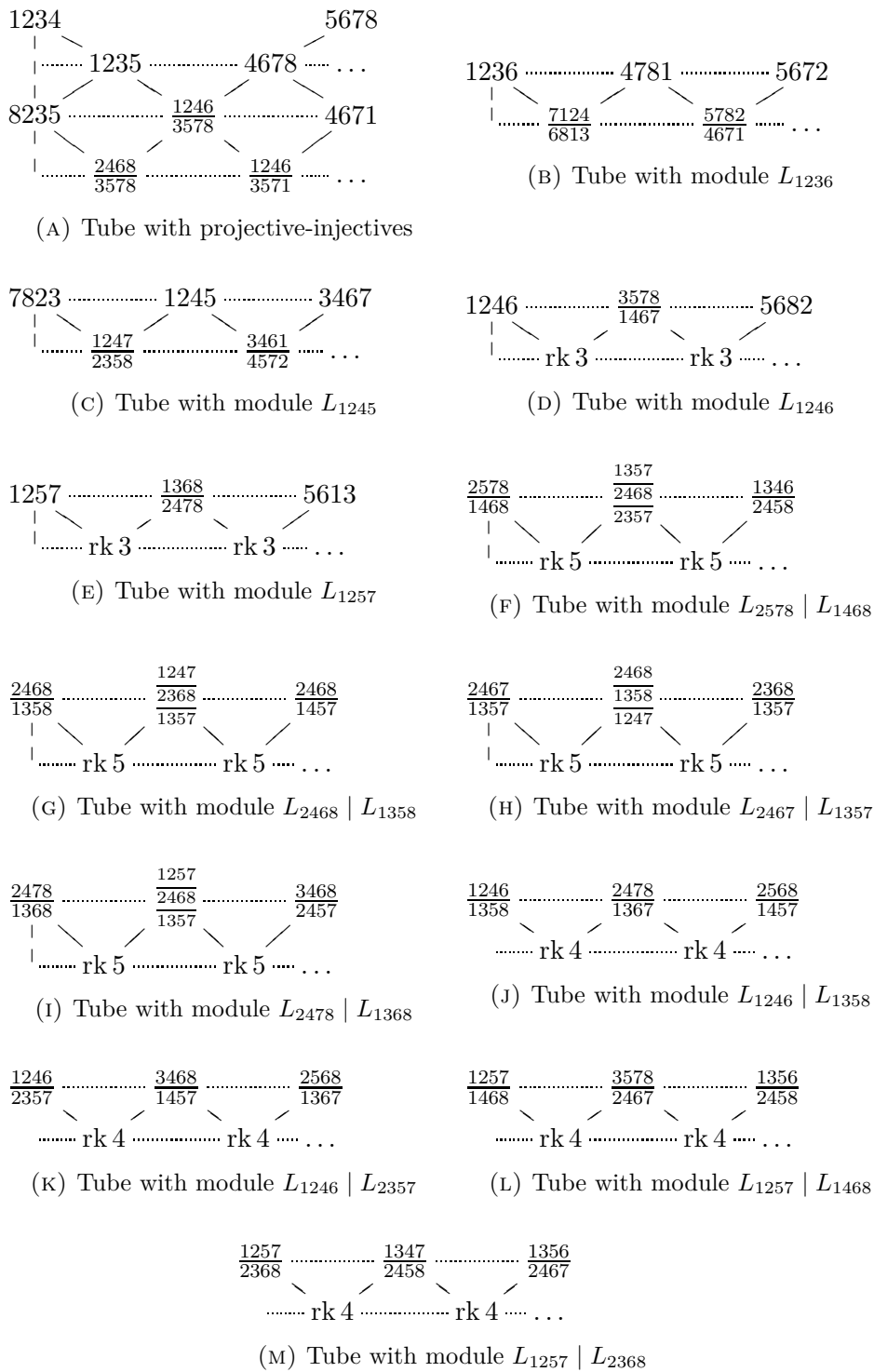
\begin{figure}[H]
\begin{subfigure}{0.4\textwidth}
$
\xymatrix@=0.4em{
 1234\ar@{--}[dd]\ar@{-}[rd] &&  && 5678     \\
  \ar@{.}[r]& 1235 \ar@{.}[rr]\ar@{-}[rd] && 4678 \ar@{.}[r]\ar@{-}[rd]\ar@{-}[ru] & \dots\\
   8235\ar@{--}[d]\ar@{.}[rr]\ar@{-}[rd]\ar@{-}[ru] && 
    \frac{1246}{3578} \ar@{.}[rr]\ar@{-}[rd]\ar@{-}[ru] && 
     4671 \\
  \ar@{.}[r]  &  \frac{2468}{3578} \ar@{.}[rr]\ar@{-}[ru] && 
   \frac{1246}{3571}\ar@{.}[r]\ar@{-}[ru] & \dots 
}
$
\caption{Tube with projective-injectives}
\end{subfigure}
\begin{subfigure}{0.4\textwidth}
$
\xymatrix@=0.4em{
1236\ar@{.}[rr]\ar@{--}[d]\ar@{-}[rd]  && 4781\ar@{.}[rr]\ar@{-}[rd] 
 && 5672\  \\
\ar@{.}[r] & {\frac{7124}{6813}} \ar@{.}[rr]\ar@{-}[ru] 
 &&  {\frac{5782}{4671}} \ar@{.}[r] \ar@{-}[ru]  & \dots 
}
$
\caption{Tube with module $L_{1236}$}
\end{subfigure}
%
\vskip .5cm
%
\begin{subfigure}{0.4\textwidth}
$
\xymatrix@=0.4em{
7823\ar@{--}[d]\ar@{.}[rr]\ar@{-}[rd]  && 1245\ar@{.}[rr]\ar@{-}[rd] 
 && 3467   \\
\ar@{.}[r] & {\frac{1247}{2358}}\ar@{.}[rr] \ar@{-}[ru]  && {\frac{3461}{4572}} \ar@{.}[r] \ar@{-}[ru]  & \dots   
}
$
\caption{Tube with module $L_{1245}$}
\end{subfigure}
\begin{subfigure}{0.4\textwidth}
$
\xymatrix@=0.4em{
1246\ar@{--}[d]\ar@{.}[rr]\ar@{-}[rd]   && \frac{3578}{1467}\ar@{.}[rr]\ar@{-}[rd]  
 && 5682  \\
\ar@{.}[r] & \rkk 3\ar@{.}[rr]\ar@{-}[ru] 
 && \rkk 3\ar@{.}[r]\ar@{-}[ru] & \dots    \\
}
$
\caption{Tube with module $L_{1246}$}
\end{subfigure}
%
\vskip .5cm
%
\begin{subfigure}{0.4\textwidth}
$
\xymatrix@=0.4em{
1257\ar@{--}[d]\ar@{.}[rr]\ar@{-}[rd]   && \frac{1368}{2478}\ar@{.}[rr]\ar@{-}[rd]  
 && 5613  \\
\ar@{.}[r] & \rkk 3\ar@{.}[rr]\ar@{-}[ru] 
 && \rkk 3\ar@{.}[r]\ar@{-}[ru] &   \dots \\
}
$
\caption{Tube with module $L_{1257}$}
\end{subfigure}
\begin{subfigure}{0.4\textwidth}
$
\xymatrix@=0.4em{
\frac{2578}{1468}\ar@{--}[d]\ar@{.}[rr]\ar@{-}[rd]   && {\tiny\thfrac{1357}{2468}{2357}}\ar@{.}[rr]\ar@{-}[rd]  
 && \frac{1346}{2458}  \\
\ar@{.}[r] & \rkk 5\ar@{.}[rr]\ar@{-}[ru] 
 && \rkk 5\ar@{.}[r]\ar@{-}[ru]  &  \dots   
}
$
\caption{Tube with module $L_{2578}\mid L_{1468}$}
\end{subfigure}
%
\vskip .5cm
%
\begin{subfigure}{0.4\textwidth}
$
\xymatrix@=0.4em{
\frac{2468}{1358}\ar@{--}[d]\ar@{.}[rr]\ar@{-}[rd]   && {\tiny\thfrac{1247}{2368}{1357}}\ar@{.}[rr]\ar@{-}[rd]  
 && \frac{2468}{1457} \\
\ar@{.}[r] & \rkk 5\ar@{.}[rr]\ar@{-}[ru] 
 && \rkk 5\ar@{.}[r]\ar@{-}[ru] &   \dots \\
}
$
\caption{Tube with module $L_{2468}\mid L_{1358}$}
\end{subfigure}
\begin{subfigure}{0.4\textwidth}
$
\xymatrix@=0.4em{
\frac{2467}{1357}\ar@{--}[d]\ar@{.}[rr]\ar@{-}[rd]   && {\tiny\thfrac{2468}{1358}{1247}}\ar@{.}[rr]\ar@{-}[rd]  
 && \frac{2368}{1357}  \\
\ar@{.}[r] & \rkk 5\ar@{.}[rr]\ar@{-}[ru] 
 && \rkk 5\ar@{.}[r]\ar@{-}[ru]  &  \dots   
}
$
\caption{Tube with module $L_{2467}\mid L_{1357}$}
\end{subfigure}
%
\vskip .5cm
%
\begin{subfigure}{0.4\textwidth}
$
\xymatrix@=0.4em{
\frac{2478}{1368}\ar@{--}[d]\ar@{.}[rr]\ar@{-}[rd]   && {\tiny\thfrac{1257}{2468}{1357}}\ar@{.}[rr]\ar@{-}[rd]  
 && \frac{3468}{2457} \\
\ar@{.}[r] & \rkk 5\ar@{.}[rr]\ar@{-}[ru] 
 && \rkk 5\ar@{.}[r]\ar@{-}[ru] &   \dots \\
}
$
\caption{Tube with module $L_{2478}\mid L_{1368}$}
\end{subfigure}
\begin{subfigure}{0.4\textwidth}
$
\xymatrix@=0.4em{
\frac{1246}{1358}\ar@{--}[d]\ar@{.}[rr]\ar@{-}[rd]   && \frac{2478}{1367}\ar@{.}[rr]\ar@{-}[rd]  
 && \frac{2568}{1457}  \\
\ar@{.}[r] & \rkk 4\ar@{.}[rr]\ar@{-}[ru] 
 && \rkk 4\ar@{.}[r]\ar@{-}[ru]  &  \dots   
}
$
\caption{Tube with module $L_{1246}\mid L_{1358}$}
\end{subfigure}
%
\vskip .5cm
%
\begin{subfigure}{0.4\textwidth}
$
\xymatrix@=0.4em{
\frac{1246}{2357}\ar@{--}[d]\ar@{.}[rr]\ar@{-}[rd]   && \frac{3468}{1457}\ar@{.}[rr]\ar@{-}[rd]  
 & & \frac{2568}{1367} \\
\ar@{.}[r] & \rkk 4\ar@{.}[rr]\ar@{-}[ru] 
 && \rkk 4\ar@{.}[r]\ar@{-}[ru] &   \dots \\
}
$
\caption{Tube with module $L_{1246}\mid L_{2357}$}
\end{subfigure}
\begin{subfigure}{0.4\textwidth}
$
\xymatrix@=0.4em{
\frac{1257}{1468}\ar@{--}[d]\ar@{.}[rr]\ar@{-}[rd]   && \frac{3578}{2467}\ar@{.}[rr]\ar@{-}[rd]  
 && \frac{1356}{2458}  \\
\ar@{.}[r] & \rkk 4\ar@{.}[rr]\ar@{-}[ru] 
 && \rkk 4\ar@{.}[r]\ar@{-}[ru]  &  \dots   
}
$
\caption{Tube with module $L_{1257}\mid L_{1468}$}
\end{subfigure}
\vskip .5cm
\begin{subfigure}{0.4\textwidth}
$
\xymatrix@=0.4em{
\frac{1257}{2368}\ar@{--}[d]\ar@{.}[rr]\ar@{-}[rd]   && \frac{1347}{2458}\ar@{.}[rr]\ar@{-}[rd]  
 && \frac{1356}{2467}  \\
\ar@{.}[r] & \rkk 4\ar@{.}[rr]\ar@{-}[ru] 
 && \rkk 4\ar@{.}[r]\ar@{-}[ru]  &  \dots   
}
$
\caption{Tube with module $L_{1257}\mid L_{2368}$}
\end{subfigure}
\caption{Rank four tubes for $\CM(B_{4,8})$.}\label{fig:rank4}
\end{figure}

In addition to these, there are two types of rank 2 tubes in $\CM(B_{4,8})$. They are described in Figure~\ref{fig:rank2}. 
As before, grey indicates non-rigid modules.

\begin{figure}[H]
\begin{subfigure}{0.4\textwidth}
$
\xymatrix@=0.4em{
1256\ar@{--}[dd]\ar@{.}[rr] \ar@{-}[dr]  && 3478\ar@{-}[dr]\ar@{.}[rr]  && 1256 \ar@{--}[dd]  \\
\ar@{.}[r] &\textcolor{gray}{ \rkk 2}\ar@{-}[ur] \ar@{.}[rr]  &&\textcolor{gray}{ \rkk 2} \ar@{-}[ur] \ar@{.}[r]  &   \\
 && && && 
}
$
\caption{Tube with module $L_{1256}$}
\end{subfigure}
\begin{subfigure}{0.4\textwidth}
$
\xymatrix@=0.4em{
1357\ar@{--}[d]\ar@{.}[rr] \ar@{-}[rd]  && {\tiny\thfrac{2468}{1357}{2468}}\ar@{.}[rr] \ar@{-}[rd]  && 1357 \ar@{--}[d]  \\
\ar@{.}[r] & \textcolor{gray}{ \rkk 4} \ar@{-}[ur]\ar@{.}[rr]   && \textcolor{gray}{ \rkk 4} \ar@{-}[ur]\ar@{.}[r] &
}
$
\caption{Tube with module $L_{1357}$}
\end{subfigure}
\caption{Rank 2 tubes for $\CM(B_{4,8})$}\label{fig:rank2}
\end{figure}
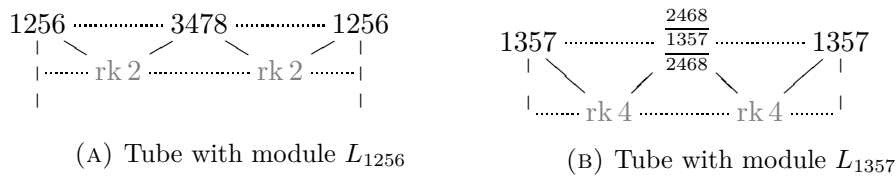

%
\subsection{Summing up}\label{sec:count-all-48}

To sum up, the number of  rigid indecomposable modules of rank 1,2 and 3 in all these tubes is: 
\[
\begin{tabular}{|l|ccc|c}
\hline
rank & 1 & 2 & 3 \\
\hline 
$\,\,\,\#$ & 70 & 120 & 82   \\
\hline 
\end{tabular}
\]
Since there are 70 rank 1 modules for $(4,8)$, we have covered all tubes containing such modules. 
Overall, there are 120 rank 2 rigid indecomposable modules. Among these, there are eight modules that do not correspond to the real roots. These are the
modules with profile of the form $1246|3578$. So there are 112 rigid indecomposable modules of rank 2 that correspond to real roots of $J_{4,8}$. 
Since there are 56 roots of degree 2, we have shown that the number of rigid indecomposable rank 2 modules corresponding to real roots 
is twice the number of real roots of degree 2 for $J_{4,8}$. 
Also, for every rank 2 rigid indecomposable module whose root is real, and whose filtration is $L_I|L_J$, 
there exist a rigid indecomposable module with filtration $L_J|L_I$. Therefore, 
both Theorem \ref{te:cyclemods} and Conjecture \ref{conj:counting} hold in this case.

\section*{Acknowledgements} 

We thank Osamu Iyama, Bernhard Keller and Alastair King 
for very useful discussions. 
In particular, we thank Matthew Pressland for many discussions and for very 
helpful comments 
on a first version of this work. 
The authors were supported by the Austrian Science Fund Project Number P25647-N26 
(K.B. and A.G.E.), Number DK W1230 (K.B.), 
and Austrian Science Fund Project Number P29807-N35 (D.B.).

\bibliographystyle{alpha}
\bibliography{biblio}

\begin{thebibliography}{RVdB02}

\bibitem[Ami09]{Amiot2009}
Claire Amiot.
\newblock Cluster categories for algebras of global dimension 2 and quivers
  with potential.
\newblock {\em Ann. Inst. Fourier (Grenoble)}, 59(6):2525--2590, 2009.

\bibitem[BB16]{BB}
Karin Baur and Dusko Bogdanic.
\newblock Extensions between {C}ohen--{M}acaulay modules of {G}rassmannian
  cluster categories.
\newblock {\em Journal of Algebraic Combinatorics}, pages 1--36, 2016.

\bibitem[BKL10]{Barot}
Michael Barot, Dirk Kussin, and Helmut Lenzing.
\newblock The cluster category of a canonical algebra.
\newblock {\em Transactions of the American Mathematical Society},
  362(8):4313--4330, 2010.

\bibitem[BKM16]{BKM16}
Karin Baur, Alastair~D. King, and Robert~J. Marsh.
\newblock Dimer models and cluster categories of {G}rassmannians.
\newblock {\em Proc. Lond. Math. Soc. (3)}, 113(2):213--260, 2016.

\bibitem[Buc86]{Buc}
Ragnar~O. Buchweitz.
\newblock Maximal {C}ohen-{M}acaulay modules and {T}ate-cohomology over
  {G}orenstein rings.
\newblock {\em University of Hannover}, -, 1986.

\bibitem[DL16]{DL16}
Laurent Demonet and Xueyu Luo.
\newblock Ice quivers with potential associated with triangulations and
  {C}ohen-{M}acaulay modules over orders.
\newblock {\em Trans. Amer. Math. Soc.}, 368(6):4257--4293, 2016.

\bibitem[DWZ08]{DWZ08}
Harm Derksen, Jerzy Weyman, and Andrei Zelevinsky.
\newblock Quivers with potentials and their representations. {I}. {M}utations.
\newblock {\em Selecta Math. (N.S.)}, 14(1):59--119, 2008.

\bibitem[GGS15]{GG15}
Christof Geiss and Ra{\'u}l Gonz{\'a}lez-Silva.
\newblock Tubular jacobian algebras.
\newblock {\em Algebras and Representation Theory}, 18(1):161--181, 2015.

\bibitem[GLFS16]{GLab}
Christof Gei{\ss}, Daniel Labardini-Fragoso, and Jan Schr{\"o}er.
\newblock The representation type of {J}acobian algebras.
\newblock {\em Advances in Mathematics}, 290:364--452, 2016.

\bibitem[GLS06]{rigid}
Christof Gei\ss, Bernard Leclerc, and Jan Schr\"oer.
\newblock Rigid modules over preprojective algebras.
\newblock {\em Invent. Math.}, 165(3):589--632, 2006.

\bibitem[GLS08]{GLS08}
Christof Geiss, Bernard Leclerc, and Jan Schr\"oer.
\newblock Partial flag varieties and preprojective algebras.
\newblock {\em Ann. Inst. Fourier (Grenoble)}, 58(3):825--876, 2008.

\bibitem[Hap88]{H}
Dieter Happel.
\newblock {\em Triangulated categories in the representation theory of
  finite-dimensional algebras}, volume 119 of {\em London Mathematical Society
  Lecture Note Series}.
\newblock Cambridge University Press, Cambridge, 1988.

\bibitem[JKS16]{JKS16}
Bernt~Tore Jensen, Alastair~D. King, and Xiuping Su.
\newblock A categorification of {G}rassmannian cluster algebras.
\newblock {\em Proc. Lond. Math. Soc. (3)}, 113(2):185--212, 2016.

\bibitem[Kac90]{Kac90}
Victor~G. Kac.
\newblock {\em Infinite-dimensional {L}ie algebras}.
\newblock Cambridge University Press, Cambridge, third edition, 1990.

\bibitem[Kel13]{Keller2013}
Bernhard Keller.
\newblock The periodicity conjecture for pairs of {D}ynkin diagrams.
\newblock {\em Ann. of Math. (2)}, 177(1):111--170, 2013.

\bibitem[Knu]{KK}
Allen Knutson.
\newblock \url{https://plus.google.com/+AllenKnutson/posts/TWsWhCakCQg}.
\newblock Accessed: 2018-04-28.

\bibitem[KR07]{KellerReiten07}
Bernhard Keller and Idun Reiten.
\newblock Cluster-tilted algebras are {G}orenstein and stably {C}alabi-{Y}au.
\newblock {\em Adv. Math.}, 211(1):123--151, 2007.

\bibitem[{Pos}06]{Postnikov}
A.~{Postnikov}.
\newblock {Total positivity, {G}rassmannians, and networks}.
\newblock {\em ArXiv Mathematics e-prints}, September 2006.

\bibitem[RVdB02]{reiten2002noetherian}
Idun Reiten and Michel Van~den Bergh.
\newblock Noetherian hereditary abelian categories satisfying {S}erre duality.
\newblock {\em Journal of the American Mathematical Society}, 15(2):295--366,
  2002.

\bibitem[Sco06]{Scott06}
Joshua~S. Scott.
\newblock Grassmannians and cluster algebras.
\newblock {\em Proc. London Math. Soc. (3)}, 92(2):345--380, 2006.

\bibitem[Sim93]{S93}
Daniel Simson.
\newblock {\em Linear representations of partially ordered sets and vector
  space categories}, volume~4.
\newblock CRC Press, 1993.

\bibitem[Zel12]{Zel-conj}
Andrei Zelevinsky.
\newblock Private communcation, {Z}urich, 2012.

\end{thebibliography}

\end{document}